\def\blfootnote{\gdef\@thefnmark{}\@footnotetext}
\newtheorem{theorem}{Theorem}[section]
\newtheorem{lemma}[theorem]{Lemma}
\newtheorem{proposition}[theorem]{Proposition}
\newtheorem{corollary}[theorem]{Corollary}
\theoremstyle{definition}
\newtheorem{remark}[theorem]{Remark}
\theoremstyle{remark}
\numberwithin{equation}{section}
\newcommand\addtag{\refstepcounter{equation}\tag{\theequation}}
 \def\C{{\mathbb{C}}}
  \def\P{{\mathbb{P}}}
\def\Mod{{\rm Mod}}
\def\U{{\rm U}}
\def\Diff{{\rm Diff}}
\begin{document}
 \blfootnote{\textup{2000} \textit{Mathematics Subject Classification}:
57R55, 57R17}
\blfootnote{\textit{Keywords}: Symplectic $4$-manifolds,
Mapping class groups, Lefschetz fibrations, lantern relation, exotic manifolds}

\newenvironment{prooff}{\medskip \par \noindent {\it Proof}\ }{\hfill
$\square$ \medskip \par}
    \def\sqr#1#2{{\vcenter{\hrule height.#2pt
        \hbox{\vrule width.#2pt height#1pt \kern#1pt
            \vrule width.#2pt}\hrule height.#2pt}}}
    \def\square{\mathchoice\sqr67\sqr67\sqr{2.1}6\sqr{1.5}6}
\def\pf#1{\medskip \par \noindent {\it #1.}\ }
\def\endpf{\hfill $\square$ \medskip \par}
\def\demo#1{\medskip \par \noindent {\it #1.}\ }
\def\enddemo{\medskip \par}
\def\qed{~\hfill$\square$}

 \title[Lefschetz Fibrations and Exotic $4$-Manifolds]
{Genus-$3$ Lefschetz Fibrations and Exotic $4$-Manifolds with $b_{2}^{+}=3$}
\author[T{\"{u}}l\.{i}n Altun{\"{o}}z]{T{\"{u}}l\.{i}n Altun{\"{o}}z}
 \address{Department of Mathematics, Middle East Technical University,
  Ankara, Turkey}
  \email{atulin@metu.edu.tr}
\date{\today}
\maketitle
 \begin{abstract}
We explicitly construct a genus-$3$ Lefschetz fibration over $\mathbb{S}^{2}$ whose 
total space is $\mathbb{T}^{2}\times \mathbb{S}^{2}\# 6\overline{\C\P^{2}}$
using the monodromy of Matsumoto's genus-$2$ Lefschetz fibration. 
We then construct more genus-$3$ Lefschetz fibrations whose total spaces are exotic minimal symplectic $4$-manifolds  
$3 \C\P^{2} \#  q\overline{\C\P^{2}}$ for $q=13,\ldots,19$. We also generalize our construction to get genus-$3k$ Lefschetz fibration structure on the $4$-manifold $\Sigma_{k}\times \mathbb{S}^{2}\# 6\overline{\C\P^{2}}$ using the generalized Matsumoto's genus-$2k$ Lefschetz fibration. From this generalized version, we derive further exotic 
$4$-manifolds via Luttinger surgery and twisted fiber sum.
\end{abstract}

  \setcounter{secnumdepth}{2}
 \setcounter{section}{0}

\section{Introduction}\label{S1}

There is a close relationship between objects in $4$-dimensional topology and algebra
 by virtue of the pioneering works of Donaldson and Gompf. By remarkable work of
 Donaldson, it was shown that every closed symplectic $4$-manifold has a structure
 of a Lefschetz pencil which yields a Lefschetz
 fibration after blowing up at its  base points ~\cite{d2}. Conversely, Gompf~\cite{gs} showed that the total space of a
 genus-$g$ Lefschetz fibration admits a symplectic structure if $g\geq 2$. This
 relation provides a way to understand any
 symplectic $4$-manifold via a positive factorization of its monodromy, if it exists.
 Given a genus-$g$ Lefschetz fibration $f:X \to \mathbb{S}^{2}$, one can 
 associate to it the factorization of the identity word $W=1$ in the mapping class 
 group of the closed connected orientable genus-$g$ surface. Conversely, given such a factorization, one can construct a 
 genus-$g$ Lefschetz fibration over $\mathbb{S}^{2}$.\par
Proving the existence of minimal symplectic structures on $4$-manifolds and 
constructing such manifolds in the homeomorphism classes of simply connected 
$4$-manifolds with very small topology have been an interesting topic that has used several 
construction techniques such as rational blowdowns, knot surgery, 
fiber sums and Luttinger surgeries. (e.g. ~\cite{a1,a3,bk,d1,f1,f2,f3,fmor,g1,ko,p1,p2,sz}.) 
Recently, some authors have applied some relations in the 
mapping class group such as lantern relation or Luttinger surgery to construct 
Lefschetz fibrations with $b_{2}^{+}=1$ or $3$ ~\cite{a5,a6,a4,a7,b1,bh,bhm,bki,e2,e4,e3}. For instance,  in ~\cite{am,a4,bki},  
genus-$2$ Lefschetz fibrations are studied and exotic genus-$2$ 
Lefschetz fibrations with $b_{2}^{+}\leq3$ are obtained via several constructions, especially using their monodromies. Also, Akhmedov and Monden constructed 
some higher genus fibrations via lantern and daisy substitutions~\cite{am1}. 
We would like to specify that the aim of this paper is not only to construct 
exotic smooth structures on very small $4$-manifolds with $b_{2}^{+}=3$, 
but also to use Lefschetz fibration structures on various smooth $4$-manifolds with small numbers of singular fibers. The novel aspects of this paper is the following. We apply the breeding technique to produce a small genus-$3$ fibration, and follow the Baykur-Korkmaz scheme~\cite{bki} to generate fibered exotic $4$-manifolds through small Lefschetz fibrations, and apply 
Luttinger surgeries to get smaller $4$-manifolds. We also apply twisted fiber sum operations and lantern substitutions which correspond to the symplectic rational blowdown surgery along $-4$ sphere~\cite{e2} to get fibered genus-$3$ exotic $4$-manifolds.

In the present paper, we first construct a relation $W=1$ in the mapping class group of
the closed connected orientable genus-$3$ surface, denoted by $\Mod_{3}$, using Matsumoto's 
well known relation~\cite{mat}. We use the construction  technique given by Baykur and 
Korkmaz in~\cite{bk1} to get the relation $W=1$ in $\Mod_{3}$ (see~\cite{b1} for more examples of this technique). We then 
obtain a genus-$3$ Lefschetz fibration $f:X\to \mathbb{S}^2$
with the monodromy factorization $W$ and prove that the $4$- manifold $X$ is diffeomorphic to $\mathbb{T}^{2}\times \mathbb{S}^{2}\# 6\overline{\C\P^{2}}
$. We apply lantern 
substitutions to the twisted fiber sums of the genus-$3$ Lefschetz fibration $f:X\to \mathbb{S}^2$ to obtain minimal genus-$3$ Lefschetz fibrations whose total spaces are homeomorphic but not diffeomorphic to 
$3 \C\P^{2} \#  q\overline{\C\P^{2}}$ for $q=13,14,15$. We also construct simply connected genus-$3$ Lefschetz fibrations applying lantern substitutions to the twisted 
fiber sums of the Lefschetz fibration $f:X\to \mathbb{S}^2$
and the generalized Matsumoto's fibration for $g=3$  obtained by ~\cite{k1,Cadavid,dmp}. Hence, we obtain exotic minimal symplectic 
$4$-manifolds admitting genus-$3$ Lefschetz fibration structures in the homeomorphism classes of 
$3\C\P^{2} \#  q\overline{\C\P^{2}}$ for $q=16,\ldots,19$. Therefore, the first main result of this paper is as follows.
 \begin{theorem}\label{tt1}
 There exist genus-$3$ Lefschetz fibrations whose total spaces are minimal symplectic $4$-manifolds homeomorphic but not diffeomorphic to 
  $3\C\P^{2} \#  q\overline{\C\P^{2}}$ for $q=13,\ldots,19$.
 \end{theorem}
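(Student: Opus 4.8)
The plan is to realise each of the seven $4$-manifolds explicitly as the total space of a genus-$3$ Lefschetz fibration, built by modifying the monodromy factorization $W$ of the fibration $f\colon X\to\mathbb{S}^{2}$ whose total space is $\mathbb{T}^{2}\times\mathbb{S}^{2}\#6\overline{\C\P^{2}}$; then for each $q$ I would verify two things separately: that the resulting manifold is simply connected with the same Euler characteristic, signature and parity as $3\C\P^{2}\#q\overline{\C\P^{2}}$, and that it is a \emph{minimal} symplectic manifold. The first fact fixes the homeomorphism type via Freedman's classification, while the second obstructs a diffeomorphism to the (non-minimal) standard connected sum.

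First I would form twisted fiber sums: copies of $f$ with itself for $q=13,14,15$, and $f$ with the generalized Matsumoto fibration for $g=3$ \cite{k1,Cadavid,dmp} for $q=16,\dots,19$. Fiber summing concatenates monodromy factorizations, so the number $n$ of nodal fibers is additive and, by Gompf's theorem, the total space is symplectic; its Euler characteristic is then $e=4-4\cdot 3+n=n-8$ and its signature is read off from the factorization through Meyer's signature cocycle. The twisting diffeomorphism is chosen so that the combined vanishing cycles normally generate $\pi_{1}$ of the genus-$3$ fiber; where this does not already force $\pi_{1}=1$, I would perform Luttinger surgeries along Lagrangian tori to kill the remaining (abelian) part of the fundamental group.

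Next I would apply lantern substitutions to the monodromy. Each substitution trades four Dehn twists for three, lowering $n$ by one; by the correspondence between lantern substitutions and rational blowdowns along a sphere of self-intersection $-4$ \cite{e2}, this keeps $b_{2}^{+}=3$ fixed, raises the signature by $1$, and lowers $b_{2}^{-}$ by $1$. Carrying out the right number of substitutions thus walks $b_{2}^{-}$ down through the values $13,\dots,19$ while preserving both $b_{2}^{+}=3$ and simple connectivity. At every stage $e$, $\sigma$ and the parity of the form agree with those of $3\C\P^{2}\#q\overline{\C\P^{2}}$, so, the manifolds being simply connected with odd indefinite intersection form of the correct rank and signature, Freedman's theorem delivers the claimed homeomorphisms.

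Finally, to rule out diffeomorphisms I would use minimality together with Seiberg--Witten theory. By Usher's minimality result the genus-$3$ fiber sums are minimal, and after checking relative minimality of each fibration and the absence of exceptional spheres created by the substitutions, every total space is a minimal symplectic $4$-manifold with $b_{2}^{+}=3>1$; hence by Taubes its Seiberg--Witten invariant is nonzero. Since $3\C\P^{2}\#q\overline{\C\P^{2}}$ decomposes as a connected sum with both summands having $b_{2}^{+}>0$, its Seiberg--Witten invariants vanish, and as these are diffeomorphism invariants no diffeomorphism can exist. I expect the main obstacle to be the simultaneous bookkeeping: one must place the twisting diffeomorphism and the embedded lanterns so that, after Hurwitz moves, the four boundary twists of each lantern become consecutive (so the substitution is genuinely performable), while keeping the vanishing cycles in a configuration that both collapses $\pi_{1}$ and satisfies Usher's hypotheses. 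Checking these compatibility conditions for each of the seven values of $q$ is the technical heart of the proof.
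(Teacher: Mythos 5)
Your overall strategy matches the paper's for $q=13,\dots,17$: start from the factorization $W$ of the fibration on $\mathbb{T}^{2}\times \mathbb{S}^{2}\# 6\overline{\C\P^{2}}$, take twisted fiber sums, apply lantern substitutions to walk $b_{2}^{-}$ down, pin the homeomorphism type by Freedman, and use minimality to exclude a diffeomorphism. However, there is a concrete arithmetic gap in your choice of building blocks for $q=18,19$. The twisted fiber sum of $f$ with the generalized Matsumoto fibration for $g=3$ has $e=-8+(14+16)=22$ and $\sigma=-6+(-8)=-14$, hence $(b_{2}^{+},b_{2}^{-})=(3,17)$; since each lantern substitution only \emph{decreases} $b_{2}^{-}$, this block can never reach $q=18$ or $q=19$. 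The paper instead uses the twisted fiber sum of the generalized Matsumoto fibration \emph{with itself} ($e=24$, $\sigma=-16$, so $(3,19)$) for $q=19$, and one lantern substitution on that word (using $t_{a}t_{b}t_{c_1}^{2}=t_{c_3}t_{C}t_{B_2}$) for $q=18$; the sum of $f$ with Matsumoto is used only for $q=17,16$. Your plan as stated covers only seven minus two of the claimed values.

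A second issue is your fallback of ``performing Luttinger surgeries to kill the remaining abelian part of $\pi_{1}$.'' The theorem asserts that the exotic manifolds \emph{are total spaces of genus-$3$ Lefschetz fibrations}, and Luttinger surgery on the total space does not in general preserve the Lefschetz fibration structure; so this escape hatch is not available here. The paper avoids it entirely by choosing the conjugating mapping classes ($\alpha$, $\beta$ for $W^{\alpha}W^{\beta}$, and $\phi=t_{b_3}t_{\beta_0}t_{c_1}$ for $W_3W_3^{\phi}$) so that the vanishing cycles of the combined positive factorization already normally generate $\pi_{1}(\Sigma_3)$; verifying this requires the explicit word computations in Lemmas 3.3--3.5 and 4.1--4.4, which is exactly the bookkeeping you correctly identify as the technical heart. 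Your minimality and non-diffeomorphism arguments are sound in outline (the paper uses the Baykur--Korkmaz criterion for the un-substituted fiber sums and the Usher/Dorfmeister symplectic-sum criterion for the lantern-substituted ones, then argues via absence of $(-1)$-spheres rather than via Seiberg--Witten vanishing, but both routes are standard).
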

We would like to remark that the idea of constructing small exotic $4$-manifolds admitting a genus-$3$ Lefschetz pencil/fibration structure via positive factorizations is already present in the literature: as shown by Baykur~\cite{b1}, there exist genus-$3$ Lefschetz pencils on exotic copies of  $\C\P^{2} \#  p\overline{\C\P^{2}}$ for $p=7,8,9$. Genus-$3$ Lefschetz fibrations on exotic copies of $3\C\P^{2} \#  q\overline{\C\P^{2}}$ for $q=16,\ldots,19$ have been constructed by Baykur, Hayano and Monden~\cite{bhm}. Also, Baykur and Hayano~\cite{bh} gave examples of exotic genus-$3$ Lefschetz pencils.

Following Theorem~\ref{tt1}, we generalize our relation $W=1$ in $\Mod_{3}$ to the relation $W_k=1$ in $\Mod_{3k}$. We then obtain the $4$-manifold $\Sigma_{k}\times \mathbb{S}^{2}\# 6\overline{\C\P^{2}}$ that admits a genus-$3k$ Lefschetz fibration over $\mathbb{S}^2$ with monodromy $W_k=1$. Using this Lefschetz fibration structure, we produce exotic copies of
$(4k-1) \C\P^{2} \#  (4k+5)\overline{\C\P^{2}}$
for any positive integer $k$ via Luttinger surgery. Finally, we
construct minimal exotic copies of
$(4k^{2}-2k+1) \C\P^{2} \#  (4k^{2}+4k+7)\overline{\C\P^{2}}$
admitting the genus-$3k$ Lefschetz fibration structure for any positive integer $k$ via the twisted fiber sum. Hence, the other results we obtained in this paper are as follows.
 \begin{theorem}\label{tt2}
 There exist infinitely many smooth $4$-manifolds homeomorphic but not diffeomorphic to $(4k-1) \C\P^{2} \#  (4k+5)\overline{\C\P^{2}}$ for any positive integer $k$.
 \end{theorem}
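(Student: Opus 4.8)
The plan is to fix $k$ and reduce the statement to two independent tasks: constructing a single symplectic $4$-manifold $X_k$ homeomorphic but not diffeomorphic to $M_k := (4k-1)\C\P^2 \# (4k+5)\overline{\C\P^2}$, and then using $X_k$ as a seed to produce infinitely many pairwise non-diffeomorphic smooth structures in the same homeomorphism class. Since $M_k$ is simply connected with odd indefinite intersection form, $b_2^+ = 4k-1 \geq 3$, $b_2^- = 4k+5$, $\chi = 8k+6$ and $\sigma = -6$, Freedman's classification reduces the homeomorphism assertion for any candidate to verifying that it is simply connected, non-spin, and carries these same numerical invariants.

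First I would build $X_k$ starting from the genus-$3k$ Lefschetz fibration on $\Sigma_k \times \mathbb{S}^2 \# 6\overline{\C\P^2}$ furnished by the monodromy factorization $W_k = 1$. Applying twisted fiber sum operations to this fibration (with itself and the generalized Matsumoto fibration) raises $b_2^+$ to $4k-1$ and arranges $\chi = 8k+6$, $\sigma = -6$, at the cost of introducing free generators of $\pi_1$ from the gluing circles. I would then choose a maximal collection of disjoint, homologically independent Lagrangian tori of square zero and perform Luttinger surgeries along them; each such surgery kills one $\pi_1$-generator while leaving $\chi$ and $\sigma$ unchanged and preserving the symplectic structure. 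A Van Kampen computation should then show that the resulting symplectic manifold $X_k$ is simply connected, and since the blow-up spheres inherited from the fiber summands keep the intersection form odd, $X_k$ is homeomorphic to $M_k$ by the reduction above. Exoticness is then immediate: $X_k$ is symplectic with $b_2^+ > 1$, so by Taubes its Seiberg--Witten invariant is nontrivial, whereas $M_k$ admits a positive scalar curvature metric (being a connected sum of copies of $\C\P^2$ and $\overline{\C\P^2}$) and hence has vanishing Seiberg--Witten invariant; thus $X_k$ is not diffeomorphic to $M_k$.

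To pass from one exotic copy to infinitely many, I would exhibit a homologically essential torus $T \subset X_k$ of square zero whose meridian is nullhomotopic in the complement, and apply Fintushel--Stern knot surgery along $T$ with a family of knots $K$ having pairwise distinct Alexander polynomials. Knot surgery preserves $\chi$, $\sigma$, the intersection form and simple connectivity, so every $X_k(K)$ is again homeomorphic to $M_k$; meanwhile the product formula $SW_{X_k(K)} = SW_{X_k}\cdot \Delta_K(t^2)$, together with $SW_{X_k} \neq 0$ and $[T] \neq 0$, forces the Seiberg--Witten invariants, and hence the sets of basic classes, to be distinct and nonzero for distinct $\Delta_K$. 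This produces infinitely many pairwise non-diffeomorphic $4$-manifolds, none of them diffeomorphic to $M_k$, as required. The step I expect to be the genuine obstacle is the $\pi_1$-bookkeeping in the fiber-summed manifold: one must write down explicit Lagrangian tori, confirm by Van Kampen that the prescribed Luttinger surgeries trivialize the fundamental group exactly, and verify that a square-zero torus $T$ with the required meridian condition survives to seed the knot surgery.
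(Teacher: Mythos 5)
Your overall template (kill $\pi_1$ by surgeries, invoke Freedman for the homeomorphism, use Taubes' nonvanishing of the Seiberg--Witten invariant against the vanishing for the standard connected sum, and then produce an infinite family by a surgery parameter) matches the paper's, but the construction of the seed manifold has a genuine numerical obstruction. The target has $e = 8k+6$ and $\sigma = -6$. A twisted fiber sum of genus-$3k$ Lefschetz fibrations over $\mathbb{S}^2$ adds the signatures of the summands, each copy of $X(k)$ contributes $\sigma = -6$, and the generalized Matsumoto genus-$3k$ fibration contributes $-4$ or $-8$; so any fiber sum with two or more Lefschetz fibration summands already has $\sigma \leq -10$. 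Since Luttinger surgery preserves both $e$ and $\sigma$, no amount of subsequent surgery can recover $\sigma = -6$. Hence your proposed seed $X_k$ cannot have the characteristic numbers of $(4k-1)\,\C\P^{2}\#(4k+5)\,\overline{\C\P^{2}}$, and the homeomorphism step fails. (A secondary inaccuracy: fiber sums of Lefschetz fibrations over $\mathbb{S}^2$ along fibers do not introduce free $\pi_1$-generators from gluing circles; the fundamental group remains a quotient of $\pi_1(\Sigma_{3k})$ by vanishing cycles. The new generators you need to kill only appear when one summand is of the form $\Sigma_{3k}\times\mathbb{T}^2$.)

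The paper avoids this by taking exactly one copy of $X(k)$ and fiber summing it along the genus-$3k$ fiber with a signature-zero, Euler-characteristic-zero block $Y_k(1,1)$, obtained from $\Sigma_{3k}\times\mathbb{T}^2$ by $6k$ Luttinger surgeries chosen so that, after Van Kampen, all generators $a_i, b_i, c, d$ die; this gives $e = 8k+6$, $\sigma = -6$, and $\pi_1 = 1$ on the nose. For the infinite family, the paper replaces $Y_k(1,1)$ by $Y_k(1,m)$ for varying $m$, rather than performing Fintushel--Stern knot surgery. Your knot-surgery idea would also work in principle once a correct simply connected symplectic seed with a suitable square-zero essential torus is in hand, but as you note yourself, exhibiting that torus and verifying its meridian condition is additional work that the coefficient-varying route sidesteps. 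To repair your argument, replace the second Lefschetz fibration summand by a torus-surgered $\Sigma_{3k}\times\mathbb{T}^2$ and carry out the explicit $\pi_1$ computation there.
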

 Note that infinite families of exotic $4$-manifolds in all homeomorphism classes given in Theorem~\ref{tt1} and~\ref{tt2} have been constructed in~\cite{abbkp}. There are even earlier examples in most of homeomorphism classes of $4$-manifolds in Theorem~\ref{tt2}, constructed in the pioneering works of Park~\cite{jp1,jp2}.
 
 \begin{theorem}
 There exist genus-$3k$ Lefschetz fibrations, whose
  total spaces are minimal symplectic $4$-manifolds homeomorphic but not diffeomorphic to $(4k^{2}-2k+1) \C\P^{2} \#  (4k^{2}+4k+7)\overline{\C\P^{2}}$ for any positive integer $k$.
 \end{theorem}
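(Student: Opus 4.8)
The plan is to obtain these manifolds as \emph{twisted fiber sums} of several copies of the genus-$3k$ Lefschetz fibration already constructed on $Y_k:=\Sigma_{k}\times\mathbb{S}^{2}\#6\overline{\C\P^{2}}$, and then to determine their homeomorphism type, minimality, and smooth inequivalence from the standard connected sums by the usual gauge-theoretic package. The number of copies is dictated by characteristic numbers. Since $\chi(Y_k)=10-4k$ and $\sigma(Y_k)=-6$, while a fiber sum along the genus-$3k$ fiber $F$ (with $\chi(F)=2-6k$) obeys $\chi(X_1\#_F X_2)=\chi(X_1)+\chi(X_2)-2\chi(F)$ and is additive in signature by Novikov additivity, the $m$-fold fiber sum $Z_k$ satisfies
\[
\chi(Z_k)=m(10-4k)-(m-1)\,2(2-6k),\qquad \sigma(Z_k)=-6m.
\]
Matching these against $\chi=8k^{2}+2k+10$ and $\sigma=-6(k+1)$ of the target manifold $(4k^{2}-2k+1)\C\P^{2}\#(4k^{2}+4k+7)\overline{\C\P^{2}}$ forces $m=k+1$. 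So I would build $Z_k$ as the $(k+1)$-fold twisted fiber sum of the fibration on $Y_k$, with monodromy $W_k\,\phi_{2}(W_k)\cdots\phi_{k+1}(W_k)=1$, where $\phi_{2},\dots,\phi_{k+1}$ are the twisting diffeomorphisms of $\Sigma_{3k}$.

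I would then record that $Z_k$ is symplectic and simply connected. Each summand is symplectic because its fiber genus is $3k\geq 2$, and by Gompf's symplectic fiber sum theorem \cite{gs} the fiber sum $Z_k$ is symplectic, carrying the asserted genus-$3k$ Lefschetz fibration. For the fundamental group, recall that the vanishing cycles of $W_k$ alone reduce $\pi_1(\Sigma_{3k})$ to $\pi_1(Y_k)=\pi_1(\Sigma_k)$, the genus-$k$ surface group. The twisting diffeomorphisms must be chosen so that the $\phi_i$-images of suitable vanishing cycles normally generate the remaining $2k$ generators, making $\pi_1(Z_k)$ trivial; the intention is that each of the $k$ gluings accounts for one handle of $\Sigma_k$, that is, a conjugate pair of generators.

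With simple connectivity established, $b_1=0$, and the computed $\chi,\sigma$ give $b_2^{+}=4k^{2}-2k+1$ and $b_2^{-}=4k^{2}+4k+7$, with $b_2^{+}\geq 3>1$. After checking that $Z_k$ is non-spin, that is, its intersection form is odd, Freedman's theorem identifies $Z_k$ up to homeomorphism with $(4k^{2}-2k+1)\C\P^{2}\#(4k^{2}+4k+7)\overline{\C\P^{2}}$. Minimality follows from Usher's theorem on symplectic fiber sums, since $Z_k$ is a fiber sum along a fiber of genus $3k\geq 2$. Finally, $Z_k$ is a minimal symplectic $4$-manifold with $b_2^{+}>1$, so by Taubes it has nontrivial Seiberg--Witten invariant, whereas the standard $(4k^{2}-2k+1)\C\P^{2}\#(4k^{2}+4k+7)\overline{\C\P^{2}}$ carries a positive scalar curvature metric and hence has vanishing Seiberg--Witten invariants. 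Thus $Z_k$ is homeomorphic but not diffeomorphic to it, which produces the desired exotic minimal symplectic genus-$3k$ Lefschetz fibrations.

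The one genuinely delicate step is the choice of twisting. Because the characteristic numbers rigidly pin $m=k+1$, there is no freedom to throw in extra copies merely to simplify the fundamental group: the $k$ gluing diffeomorphisms of $\Sigma_{3k}$ must be written down explicitly and shown to collapse the normal closure of the assembled vanishing cycles all the way to the trivial group, uniformly in $k$. Carrying out this $\pi_1$-computation --- tracking the images of the (generalized) Matsumoto vanishing cycles of $W_k$ under the twists --- is where the real work lies; the Euler-characteristic and signature bookkeeping, the appeal to Usher for minimality, and the Seiberg--Witten comparison are then routine.
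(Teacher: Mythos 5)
Your route is exactly the paper's: form the $(k+1)$-fold twisted self-fiber-sum of the genus-$3k$ fibration on $\Sigma_{k}\times \mathbb{S}^{2}\# 6\overline{\C\P^{2}}$ (the count $m=k+1$ being forced by $\chi$ and $\sigma$, just as you compute), then apply Freedman, minimality of the resulting fibration, and the standard smooth distinction from the connected sum. The numerical bookkeeping, the appeal to Gompf for the symplectic structure, and the non-diffeomorphism argument are all fine, and your Taubes-versus-positive-scalar-curvature phrasing is an acceptable substitute for the paper's ``minimal, hence contains no $(-1)$-sphere'' formulation.

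The step you explicitly leave open --- producing the twistings $\phi_{2},\dots,\phi_{k+1}$ and verifying $\pi_{1}(Z_k)=1$ --- is, however, the only substantive content of the theorem, and as written your proposal contains no proof of simple connectivity, hence no homeomorphism statement. The paper closes this gap with a short observation rather than a long computation: the word $W_k$ contains Dehn twists about the two \emph{disjoint} nonseparating curves $B_{2k}$ and $B_{2k}^{\prime}$, so by the change-of-coordinates principle one may choose diffeomorphisms $f_i$ of $\Sigma_{3k}$ with $f_i(B_{2k})=a_i$ and $f_i(B_{2k}^{\prime})=b_{i+1}$ for $i=1,\dots,k-1$, together with $f_k(B_{2k})=a_k$ and $f_k(B_{2k}^{\prime})=b_1$, where $a_1,\dots,a_k,b_1,\dots,b_k$ are exactly the generators that survive in $\pi_{1}(X(k))\cong\pi_{1}(\Sigma_k)$ (Lemma~\ref{lem}). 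Each conjugated copy $W_k^{f_i}$ then imposes the relations $a_i=1$ and $b_{i+1}=1$ (indices cyclic), so the $k$ twisted summands kill all $2k$ remaining generators at once; your heuristic ``one gluing per handle'' is realized by this single pair of disjoint vanishing cycles, uniformly in $k$. With that inserted your argument is complete. Note also that minimality can be read off directly from the factorization $W_k W_k^{f_1}\cdots W_k^{f_k}$ having the form $W_1^{\phi}W_2$ with $W_1,W_2$ positive factorizations of the identity (Proposition~\ref{p1}), which sidesteps the exceptional cases one would otherwise have to exclude when invoking Theorem~\ref{t1} for a self-fiber-sum.
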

The present paper is organized as follows. In Section~\ref{S2}, we give some preliminary
information and results. In Section ~\ref{S3}, we explicitly construct a positive factorization
$W$ for a genus-$3$ Lefschetz fibration over $\mathbb{S}^{2}$. We also derive new
simply-connected genus-$3$ Lefschetz fibrations by constructing their monodromies. In Section ~\ref{S4}, we construct minimal symplectic $4$-manifolds admitting a 
genus-$3$ Lefschetz fibration structure which are exotic copies of
$3\C\P^{2} \#  q\overline{\C\P^{2}}$ for $q=13,\ldots,19$ using
genus-$3$ Lefschetz fibrations obtained in Section $3$ and generalized Matsumoto's
fibration for $g=3$. In the last section, we generalize the construction of genus-$3$
Lefschetz fibration obtained in Section~\ref{S3} to obtain genus-$3k$ Lefschetz
fibrations for any positive integer $k$. Finally, using these genus-$3k$ Lefschetz fibration structures, we derive
some fibered and non-fibered examples of exotic structures.
\medskip

\noindent
{\bf Acknowledgements.}
I would like to thank my advisor Mustafa Korkmaz for his support and inspiring discussions
in this work. Thanks are due to Anar Akhmedov for helpful conversations
and suggestions. This paper was written while I was visiting School of Mathematics,
University of Minnesota. I would like to thank School of Mathematics, U of M for
their hospitality and wonderful research environment. This paper is a part of the author's Ph.D. thesis ~\cite{a}  at Middle
East Technical University. The author was partially supported by the Scientific and Technologic Research Council of Turkey (TUBITAK).

\section{ preliminaries}\label{S2}

In this section, we state some preliminary definitions and recall some useful facts concerning the
mapping class groups, relations between Dehn twists and  Lefschetz fibrations ~\cite{fm,gs,mar}.
We then give the Matsumoto's relation~\cite{mat} and its generalized
version~\cite{k1,Cadavid,dmp}. Also, we give a method to compute the signatures of Lefschetz fibrations introduced by Endo and Nagami~\cite{e11}.

\subsection{Mapping Class Groups}
\  Let $\Sigma_{g}^{n}$ denote a compact connected oriented smooth surface of genus $g$
with $n\geq 0$ boundary components. Let $\Diff^{+}(\Sigma_{g}^{n})$ denote the group
of all orientation-preserving self-diffeomorphisms of $\Sigma_{g}^{n}$ fixing all points on the
boundary and let $\Diff^{+}_{0}(\Sigma_{g}^{n})$  denote the subgroup of
$\Diff^{+}(\Sigma_{g}^{n})$  consisting of orientation-preserving self-diffeomorphisms of
$\Sigma_{g}^{n}$  isotopic to the identity. {\textit{The mapping class group}} of
$\Sigma_{g}^{n}$, denoted by $\Mod_{g}^{n}$, is defined to be the group of the isotopy
classes of self-diffeomorphisms of $\Sigma_{g}^{n}$, where diffeomorphisms and isotopies
fix all points on the boundary, i.e.,
\[
\Mod_{g}^{n}=\Diff^{+}(\Sigma_{g}^{n})/ \Diff^{+}_{0}(\Sigma_{g}^{n}).
\]
We will denote $\Mod_{g}^{n}$ and $\Sigma_{g}^{n}$ by $\Mod_g$ and $\Sigma_{g}$,
respectively, when $n=0$.\par

For a simple closed curve $a$ on a surface $\Sigma_{g}^{n}$, a {\textit{right (or  positive)}}
 {{\textit{Dehn twist}}} about $a$ is the diffeomorphism $t_{a}$ obtained
by cutting $\Sigma_{g}^{n}$ along $a$ and gluing back after rotating one of the copies of $a$ by $360$ degrees to the right.\par

Throughout this paper, for any two mapping classes $f$ and $h$, the multiplication $fh$
means that $h$ is applied first and then $f$.

\begin{lemma}\label{le}\cite{fm} For any $f \in \Mod_{g}^{n}$ and two simple closed curves $a$ and $b$ on $\Sigma_{g}^{n}$, we have
 \begin{enumerate}
    \item $ft_{a}f^{-1}=t_{f(a)}$ ({{\textit{Conjugation}}}),
    \item if $a$ and $b$ are disjoint, then $t_a$ and $t_b$ commute ({{\textit{Commutativity}}}),
    \item if $a$ intersects $b$ transversely at one point, then  $t_at_bt_a=t_bt_at_b$ ({{\textit{Braid relation}}}).
  \end{enumerate}
\end{lemma}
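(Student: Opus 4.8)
The plan is to prove the three relations directly from the definition of a Dehn twist as a diffeomorphism supported in an annular neighbourhood of the curve, treating the conjugation relation first since the braid relation will be reduced to it. For part (1), I would fix a regular annular neighbourhood $A$ of $a$ together with the standard model in which $t_a$ is the identity outside $A$ and the prescribed right shearing inside $A$. Since $f$ is orientation-preserving, it carries $A$ to a regular annular neighbourhood $f(A)$ of $f(a)$. The composition $f t_a f^{-1}$ is then the identity outside $f(A)$, and a direct check of the local formula inside $f(A)$ shows that it realises exactly the right Dehn twist there; the orientation-preservation of $f$ is precisely what guarantees that a right twist is sent to a right twist rather than a left one. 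Hence $f t_a f^{-1}$ is isotopic to $t_{f(a)}$, which is the claim.

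For part (2), I would use that $a$ and $b$ are disjoint to choose disjoint annular neighbourhoods $A \supseteq a$ and $B \supseteq b$. Then $t_a$ can be represented with support in $A$ and $t_b$ with support in $B$, and since $A \cap B = \emptyset$ these two diffeomorphisms commute pointwise; therefore the mapping classes satisfy $t_a t_b = t_b t_a$.

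The main work is in part (3), the braid relation, and here I would reduce it to part (1) via the geometric key lemma that $(t_a t_b)(a) = b$ up to isotopy whenever $a$ and $b$ meet transversely in a single point. Granting this lemma, the conjugation relation gives
\[
(t_a t_b)\, t_a\, (t_a t_b)^{-1} = t_{(t_a t_b)(a)} = t_b,
\]
and rearranging $(t_a t_b)\, t_a\, (t_a t_b)^{-1} = t_b$ yields $t_a t_b t_a = t_b t_a t_b$ at once. To establish the key lemma I would pass to a regular neighbourhood $N$ of $a \cup b$, which (since $a$ and $b$ intersect once transversely) is a torus with one boundary component in which $a$ and $b$ form a standard pair of generating curves; because $t_a$ and $t_b$ are supported in $N$, it suffices to verify $(t_a t_b)(a) = b$ inside $N$. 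This can be done by an explicit isotopy in the one-holed torus, with the homological shadow $(t_a t_b)_* a = b$ (computed from the transvection formula $t_c(x) = x + \langle x, c\rangle\, c$) serving as a bookkeeping check. I expect this honest isotopy in the one-holed torus to be the only genuinely nontrivial point; the conjugation and commutativity relations are essentially immediate from the support description of Dehn twists.
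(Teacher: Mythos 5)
The paper offers no proof of this lemma at all; it is stated as a standard fact with a citation to Farb--Margalit, and your argument is precisely the proof given there: parts (1) and (2) from the support description of Dehn twists, and part (3) reduced to (1) via the key fact that $(t_a t_b)(a) = b$ when $a$ and $b$ meet once transversely, verified in a regular neighbourhood of $a \cup b$ (a one-holed torus). Your proof is correct, and correctly isolates the explicit isotopy in the one-holed torus as the only nontrivial step.
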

\begin{figure}[ht]
\begin{center}
\scalebox{0.3}{\includegraphics{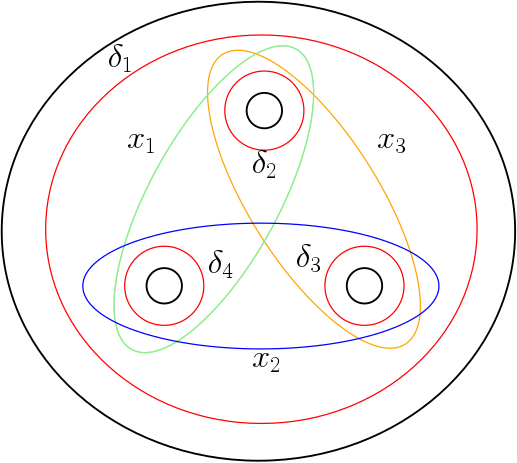}}
\caption{The lantern configuration on a sphere with four holes.}
\label{LR}
\end{center}
\end{figure}
\subsection{Lantern relation}
The lantern relation was discovered by Dehn and rediscovered by Johnson. 

 \begin{lemma}\cite{fm,j}
 \ Let $\delta_{1},\delta_{2}, \delta_{3}$ and $\delta_{4}$ be the boundary
curves of $\Sigma_{0}^{4}$ and $x_{1},x_{2}$ and $x_{3}$ be the simple closed
curves as shown in Figure~\ref{LR}. Suppose that $\Sigma_{0}^4$ is embedded
in $\Sigma_{g}^n$.
 The following relation, called the lantern relation, holds in $\Mod_{g}^{n}$:
 \[
 t_{\delta_{1}}t_{\delta_{2}}t_{\delta_{3}}t_{\delta_{4}}=t_{x_{1}}t_{x_{2}}t_{x_{3}}
 \] 
\end{lemma}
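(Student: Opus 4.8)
The plan is to prove the relation first in $\Mod_0^4$ itself and then transport it into $\Mod_g^n$ by functoriality. The inclusion $\Sigma_0^4 \hookrightarrow \Sigma_g^n$, extending each self-diffeomorphism by the identity outside $\Sigma_0^4$, induces a homomorphism $\eta \colon \Mod_0^4 \to \Mod_g^n$ carrying each $t_c$ to $t_c$ for every simple closed curve $c$ in $\Sigma_0^4$. Since $\eta$ is a homomorphism it sends relations to relations, so it suffices to establish $t_{\delta_1}t_{\delta_2}t_{\delta_3}t_{\delta_4}=t_{x_1}t_{x_2}t_{x_3}$ in $\Mod_0^4$; injectivity of $\eta$ is irrelevant. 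Note that all seven Dehn twists involved are supported in the interior of $\Sigma_0^4$ and hence represent classes fixing $\partial \Sigma_0^4$ pointwise, so both words are genuinely elements of $\Mod_0^4$.

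To verify the identity in $\Mod_0^4$ I would use the Alexander method: a mapping class of a compact surface with boundary is determined by its action on the isotopy classes (rel endpoints) of a collection of properly embedded arcs whose complement is a union of disks. Realize $\Sigma_0^4$ as a large disk with outer boundary $\delta_4$ and three removed subdisks bounded by $\delta_1,\delta_2,\delta_3$, arranged so that $x_1,x_2,x_3$ each encircle a pair of inner holes as in Figure~\ref{LR}. I would then fix a spanning family of disjoint arcs, for instance arcs joining the inner boundaries to one another and one arc running out to $\delta_4$, chosen so that cutting along them yields disks. The lemma reduces to the finite check that the two words send each arc of this family to the same isotopy class rel endpoints.

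The main obstacle, and the real content of the proof, is this explicit arc computation: one must track how each $t_{\delta_i}$ and each $t_{x_j}$ drags the chosen arcs, compose them in the prescribed right-to-left order (recall the convention that $h$ is applied before $f$ in $fh$), and match the resulting isotopy classes on the two sides. This is elementary but demands careful bookkeeping, since a single twist can wind an arc around several holes; the payoff is that $t_{\delta_1}t_{\delta_2}t_{\delta_3}t_{\delta_4}$ and $t_{x_1}t_{x_2}t_{x_3}$ produce identical pictures. I would stress that no purely homological shortcut exists: both sides act trivially on $H_1$, and even a crude count of factors disagrees ($4$ versus $3$), so genuine isotopy information is unavoidable.

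As an independent cross-check I would also sketch the derivation through the hyperelliptic involution. Viewing $\Sigma_0^4$ as the base of the double cover of a torus branched over four points, the curves $\delta_i$ and $x_j$ lift to simple closed curves on a genus-one surface with boundary, and each Dehn twist downstairs lifts to a product of commuting twists upstairs; the lantern relation then descends from a computation with the lifted twists, which can in turn be read off from the $\mathrm{SL}(2,\Z)$ structure of the torus mapping class group. Agreement of the two derivations confirms the stated relation, after which Lemma~\ref{le} allows us to freely rearrange the factors by commutativity and braiding in the later constructions.
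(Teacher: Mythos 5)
The paper does not prove this lemma at all: it is quoted as a classical fact with citations to \cite{fm,j}, so there is no internal argument to compare against. Your outline is the standard proof from the cited reference \cite{fm}: push the relation forward along the homomorphism $\Mod_{0}^{4}\to\Mod_{g}^{n}$ induced by the embedding (correctly noting that only being a homomorphism, not injectivity, is needed), and verify the identity in $\Mod_{0}^{4}$ by the Alexander method, checking that both words act identically on a filling system of arcs. That structure is sound, and your remark that no homological argument can suffice is apt. The only substantive caveat is that, as you yourself acknowledge, the entire content of the lemma lies in the finite arc-tracking computation, which you describe but do not carry out; as written this is a correct and well-organized plan rather than a completed proof, and one would either execute that check or, as the paper does, simply cite \cite{fm,j}. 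The proposed cross-check via the branched double cover of the torus is dispensable and somewhat delicate to set up rigorously (one must invoke the Birman--Hilden correspondence to relate twists downstairs to their lifts), so I would not lean on it; the Alexander-method verification alone suffices.
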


 \subsection{Lefschetz fibratons and monodromy representations}
 We start with a review of basic definitions and properties of Lefschetz fibrations. Throughout the paper, we denote the $2$-sphere by $\mathbb{S}^{2}$.

Let $M$ be a closed connected oriented smooth $4$-manifold. 
 A smooth surjective map $f:M\to \mathbb{S}^{2}$ is a {\textit{Lefschetz fibration}} 
 of genus $g$ if it has finitely many critical points and around each critical point, it can be written as 
 $f(z_{1},z_{2})=z_{1}^{2}+z_{2}^{2}$ with respect to some local complex coordinates agreeing with the orientations of $M $ and $\mathbb{S}^2$. The genus of a regular fiber $F$ is called 
\textit{the genus of the fibration}. We assume that all the critical points lie in the distinct fibers, called 
\textit{singular fibers}, which can be achieved after a small perturbation. Each singular fiber is 
obtained by shrinking a simple closed curve, called \textit{vanishing cycle}, to a point in the 
regular fiber. If the vanishing cycle is nonseparating (resp. separating), then the singular 
fiber is called \textit{irreducible} (resp. \textit{reducible}). In this work, we also assume 
that all Lefschetz fibrations are nontrivial, i.e. there exists at least one singular fiber and 
fibrations are relatively minimal, i.e. no fiber contains a $(-1)$-sphere, otherwise one can 
blow it down without changing the rest of the fibration.
 
Lefschetz fibrations can be described combinatorially via their monodromies. The monodromy
 of a Lefschetz fibration $f:M\to \mathbb{S}^{2}$ is given by a positive factorization 
$t_{\alpha_{1}}t_{\alpha_{2}}\cdots t_{\alpha_{n}}=1$ 
in $\Mod_g$, where $\alpha_{i}$'s are the vanishing cycles. Conversely,
for a given positive factorization $t_{a_{1}}t_{a_{2}}\cdots t_{a_{k}}=1$ in $\Mod_g$,
one can construct a genus-$g$ Lefschetz fibration over $\mathbb{S}^{2}$ by attaching
$2$-handles along vanishing cycles $a_{i}$ in a $\Sigma_{g}$ fiber in $\Sigma_{g}\times D^{2}$
with $-1$ framing with respect to the product framing, and then by closing it 
up via a fiber-preserving map between boundaries
to get a genus-$g$ fibration over $\mathbb{S}^{2}$. Such a fibration is uniquely determined up to
isomorphisms, which are orientation-preserving self-diffeomorphisms of the total spaces and
$\mathbb{S}^{2}$ making the fibrations commute.  The relation $t_{a_{1}}t_{a_{2}}\cdots t_{a_{k}}=1$
in $\Mod_g$ is uniquely determined up to \textit{Hurwitz moves} (exchanging subwords
$t_{a_{i}}t_{a_{i+1}}=t_{a_{i+1}}t_{t_{a_{i+1}^{-1}}(a_{i})}$ and \textit{global conjugations}
(changing each $t_{a_{i}}$ with $t_{\varphi(a_{i})}$ for some $\varphi \in \Mod_g$) if $g\geq 2$. 

A map $\sigma : \mathbb{S}^{2} \to M$ is called a \textit{section} of a Lefschetz fibration
$f:M\to \mathbb{S}^{2}$ if $f\circ\sigma=id_{\mathbb{S}^{2}}$. If there exists a lift of a positive
relation $t_{\alpha_{1}}t_{\alpha_{2}}\cdots t_{\alpha_{n}}=1$ in $\Mod_g$  to $\Mod_{g}^{k}$
such that $t_{\tilde{\alpha_{1}}}t_{\tilde{\alpha_{2}}}\cdots t_{\tilde{\alpha_{n}}}=t_{\delta_{1}}^{m_{1}}
t_{\delta_{2}}^{m_{2}}\cdots t_{\delta_{k}}^{m_{k}}$ where $m_i$'s are integers and $\delta_{i}$'s are
boundary curves, then the Lefschetz fibration $f:M\to \mathbb{S}^{2}$ admits $k$ disjoint sections
$S_1,\ldots,S_k$, where $S_j$ is of self-intersection $-m_j$ and vice versa ~\cite{bkm}.

  For  $i=1,2$, let $f_i :M_i \to \mathbb{S}^{2}$ be a genus-$g$ Lefschetz fibration with a regular
fiber $F_i$ and monodromy factorization $W_i=1$. Let  $r$ be an orientation-reversing
self-diffeomorphism of $S^{1} $ and $\phi : F_2\to F_1$ be an orientation-preserving
diffeomorphism. We remove a fibred neighborhood of $F_i$ from $M_i$ and glue the
resulting manifolds along their boundaries using the orientation-reversing diffeomorphism
$r\times \phi$. The resulting $4$-manifold is a genus-$g$ Lefschetz fibration over
$\mathbb{S}^{2}$ with monodromy factorization $W_1W_{2}^{\phi}$, which is called
a \textit{twisted fiber sum} of the Lefschetz fibrations $f_1$ and $f_2$. Moreover, for $i=1,2$, if the Lefschetz fibration $f_i :M_i \to \mathbb{S}^{2}$ admits a section with
self-intersection $m_i$, then the twisted fiber sum of $f_1$ and $f_2$ admits a section
with self-intersection $m_1+m_2$. Here the notation $W^{\phi}$ denotes the conjugated 
word of $W$, i.e., $W^{\phi}=t_{\phi(\alpha_{1})} t_{\phi(\alpha_{2})}\cdots t_{\phi(\alpha_{n})}$,
if $W=t_{\alpha_{1}}t_{\alpha_{2}}\cdots t_{\alpha_{n}}$.
\begin{figure}[h]
\begin{center}
\scalebox{0.35}{\includegraphics{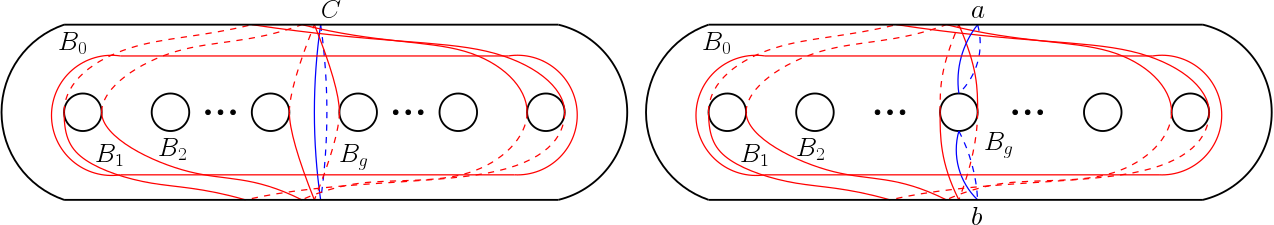}}
\caption{The curves $B_i$, $a$, $b$ and $C$ on $\Sigma_{g}$.}
\label{GM2}
\end{center}
\end{figure}
\begin{figure}[h]
\begin{center}
\scalebox{0.35}{\includegraphics{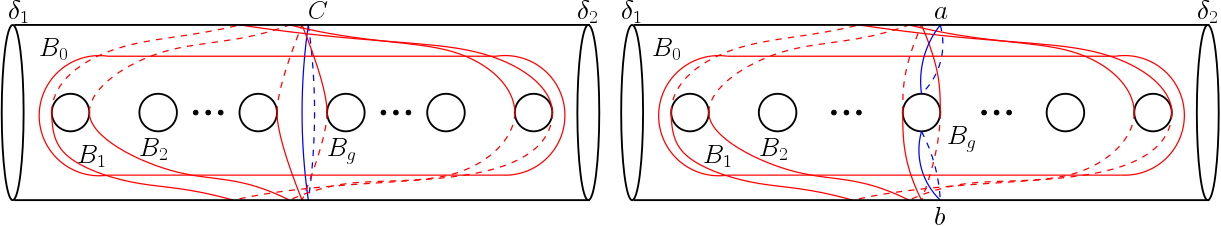}}
\caption{The curves $B_{i}$, $a$, $b$ and $C$ on $\Sigma_{g}^{2}$.}
\label{GM}
\end{center}
\end{figure}
\subsection{Generalized Matsumoto's relation}\label{s1}
Let $B_0,B_1,\ldots,B_g, a,b$ and $C$ be simple closed curves on $\Sigma_{g}$ as shown
in Figure ~\ref{GM2} and $W_g$ be the following word:

\[
W_g=\left\{\begin{array}{lll}
(t_{B_0}t_{B_1}\cdots t_{B_g}t_{C})^{2}& \textrm{if} & g=2k,\\
(t_{B_0}t_{B_1}\cdots t_{B_g}t_{a}^{2}t_{b}^{2})^{2}& \textrm{if} & g=2k+1.\\
\end{array}\right.
\]

The word $W_g$ represents the identity in the mapping class group $\Mod_{g}$,
which was shown by Matsumoto for $g=2$~\cite{mat} , and by Korkmaz~\cite{k1}
and independently by Cadavid~\cite{Cadavid} for $g\geq 3$ (see also ~\cite{dmp} for a different proof). Stipsicz and Ozbagci~\cite{os} showed
that $W_g=t_\delta$ in $\Mod_{g}^{1}$, where $\delta$ is the boundary of the genus-$g$ surface,  $\Sigma_{g}^{1}$. Korkmaz~\cite{kl} obtained a lifting of $W_g$ that equals to the product $t_{\delta_1}t_{\delta_2}$ in $\Mod_{g}^{2}$, where $\delta_1$ and $\delta_2$ are the boundary components of $\Sigma_{g}^{2}$. Recently, Hamada~\cite{h} gave a maximal set of disjoint $(-1)$ sections of $W_g$. One of his
liftings is $W_g=t_{\delta_1}t_{\delta_2}$, where the curves
$\delta_1$ and $\delta_2$ are as depicted in Figure~\ref{GM}.\par
Let $M_g$ be the total space of the Lefschetz fibration over $\mathbb{S}^{2}$ with the monodromy
factorization $W_g$. Then $M_g$ is diffeomorphic to
$\Sigma_{k}\times \mathbb{S}^{2}\# 4\overline{\C\P^{2}}$
(respectively $\Sigma_{k}\times \mathbb{S}^{2}\# 8\overline{\C\P^{2}}$)
if $g=2k$ (respectively $g=2k+1$)~\cite{k1,Cadavid}.

\subsection{Signature of a relation}\label{s2}
 Endo and Nagami~\cite{e11} gave a useful method to calculate the signature of a
Lefschetz fibration over $\mathbb{S}^{2}$ by introducing the notion of the signature
of a relation. This method allows one to determine the
signature of a Lefschetz fibration over $\mathbb{S}^{2}$ as the sum of signatures
of basic relations in its monodromy. They also explicitly compute the signature of
some known relations. Let us recall the definition of the signature of a relation and
the results that we will need later.

Let ${\mathcal F}$ be the free group generated by all isotopy classes of simple
closed curves on $\Sigma_{g}$. There is a natural homomorphism
$\varrho :\mathcal{F} \to \Mod_{g}$ mapping a simple closed curve $a$ to the Dehn twist $t_{a}$. Since $\Mod_{g}$ is finitely generated
by Dehn twists, the homomorphism $\varrho$ is surjective. An element of
$Ker \varrho$ is called a \textit{relator}. A relator $\rho$ is of the form
$\rho=c_{1}^{\epsilon_{1}}c_{2}^{\epsilon_{2}}\cdots c_{n}^{\epsilon_{n}}$,
where $c_i$'s are simple closed curves on $\Sigma_{g}$ and $\epsilon_{i}=\pm 1$
for $i=1,\ldots,n$. The word $\rho$ is said to be a  \textit{positive relator}
if $\epsilon_{i}=+1$ for all $i$. For instance, 
\[
L=x_1x_2x_3\delta_{1}^{-1}\delta_{2}^{-1}\delta_{3}^{-1}\delta_{4}^{-1}
\]
 is a relator coming from the lantern relation, which we call the \textit{lantern relator}, 
 where the curves $x_i$ and $\delta_i$ as shown in Figure~\ref{LR}.\\
 \noindent
  The words
 \[
(B_{0}B_{1} \cdots B_{g}C)^{2} \textrm{  if }  g \textrm{ is even,} 
\]
and
\[ 
(B_{0}B_{1} \cdots B_{g}a^{2}b^{2})^{2} \textrm{  if }  g \textrm{ is odd} 
\]
are also relators, where the curves $B_i,C,a$ and $b$ as shown in
Figure~\ref{GM2}.

For two relators $\rho=W_{1}^{-1}W_{2}$ and $\xi=UW_1V$  such that 
$U,V, W_1$ and $W_2$ are positive words in $\mathcal{F}$, one can obtain a new 
positive relator 
\[
\xi^{\prime}=\xi V^{-1}\rho V=UW_2V.
\]
This operation is called 
\textit{$\rho$-substitution to $\xi$}. When $\rho$ is a lantern relator then we 
say that $\xi^{\prime}$ is obtained by applying the lantern substitution to $\xi$.

There is an explicit homomorphism $c_{g}:Ker \varrho \to \mathbb{Z}$ inducing
the evaluation map $H_{2}(\Mod_{g})\to \mathbb{Z}$ for the cohomology class
of $\tau_{g}$, where $\tau_{g}:\Mod_{g}\times \Mod_{g}\to \mathbb{Z}$ is the
Meyer's signature cocycle. For a relator $\rho\in Ker\varrho$, \textit{the signature}
of $\rho$ is given by 
\[
I_{g}(\rho):=-c_{g}(\rho)-s(\rho),
\]
where $s(\rho)$ is the sum of the exponents of Dehn twists about separating
simple closed curves appearing in the word $\rho$. Moreover, Endo and Nagami
extended this definition to elements of the free group $\mathcal{F}$.

\ For the proofs of the following lemma and theorem, we refer the reader to~\cite{e11}.

\begin{lemma} \label{l1}The signature $I_{g}$ satisfies the following:
\item{(i)}\label{l1i}  $I_{g}(a)=-1$, where $a$ is the isotopy class of a separating curve.
\item{(ii)} \label{l1ii}$I_{g}(L)=+1$, where $L$ is a lantern relator.
\item{(iii)} \label{l1iii}$  I_{g}\big((B_{0}B_{1} \cdots B_{g}C)^{2}\big)=-4$ \hspace{0.3cm} if $g$ is even,\\
\hspace*{0.6cm} $I_{g}\big((B_{0}B_{1} \cdots B_{g}a^{2}b^{2})^{2}\big)=-8$ \hspace{0.3cm} if $g$ is odd.

\end{lemma}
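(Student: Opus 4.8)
The plan is to compute the integer $c_g$ and the separating-twist count $s$ for each of the three relators separately, then combine them via the defining formula $I_g(\rho)=-c_g(\rho)-s(\rho)$. Throughout I will lean on the fact that $c_g$ is a homomorphism on $Ker\,\varrho$ and that, by Meyer's signature formula, for a positive relator $\rho$ whose induced Lefschetz fibration has total space $M$, the value $c_g(\rho)$ equals $-\sigma(M)$, the negative of the signature of $M$. This is the key translation: it lets me replace an algebraic computation in $H_2(\Mod_g)$ with a topological signature computation for a manifold I already understand.

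\emph{Part (i).} For a single separating curve $a$, the word $\rho=a$ itself is not a relator (a single Dehn twist is not trivial in $\Mod_g$), so here $I_g(a)$ is Endo–Nagami's extension of the signature to arbitrary elements of $\mathcal{F}$. The separating curve contributes nothing to $c_g$ beyond its homological behaviour, while the exponent sum of separating twists is $s(a)=+1$. Since a single separating twist bounds and contributes trivially to Meyer's cocycle, one gets $c_g(a)=0$, hence $I_g(a)=-c_g(a)-s(a)=0-1=-1$. I would cite the Endo–Nagami extension directly rather than reconstruct it.

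\emph{Part (ii).} For the lantern relator $L=x_1x_2x_3\delta_1^{-1}\delta_2^{-1}\delta_3^{-1}\delta_4^{-1}$, all seven curves involved are nonseparating, so $s(L)=0$ and $I_g(L)=-c_g(L)$. The lantern relation is the monodromy of the rational elliptic surface viewpoint: substituting a lantern changes the total space by a rational blowdown along a $-4$-sphere, which raises the signature by exactly $+1$. Concretely, comparing the four-holed-sphere piece $\delta_1\delta_2\delta_3\delta_4$ (which builds a piece of signature $0$ locally, four $-1$ handle attachments) against $x_1x_2x_3$ shows the Meyer cocycle sum differs by $-1$, giving $c_g(L)=-1$ and thus $I_g(L)=+1$. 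I would compute this by Meyer's explicit cocycle on the relevant $\Sp(2g,\Z)$ images of the $x_i$ and $\delta_j$.

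\emph{Part (iii).} This is the main obstacle and the real content. The word $(B_0B_1\cdots B_gC)^2$ (even $g$) and $(B_0B_1\cdots B_ga^2b^2)^2$ (odd $g$) are exactly the generalized Matsumoto relators whose total space $M_g$ is identified in Subsection~\ref{s1} as $\Sigma_k\times\mathbb{S}^2\#\,4\overline{\C\P^2}$ (even $g=2k$) or $\Sigma_k\times\mathbb{S}^2\#\,8\overline{\C\P^2}$ (odd $g=2k+1$). The strategy is: first note that in the even case the only separating twists are the two copies of $t_C$, so $s=+2$, whereas in the odd case all twists $B_i,a,b$ are nonseparating, so $s=0$. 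Next compute $\sigma(M_g)$: since $\sigma(\Sigma_k\times\mathbb{S}^2)=0$ and each $\overline{\C\P^2}$ contributes $-1$, we get $\sigma(M_g)=-4$ (even) and $-8$ (odd). Then $c_g=-\sigma(M_g)$ gives $c_g=+4$ and $+8$ respectively. Finally $I_g=-c_g-s$ yields, in the even case, $-4-2=-6$, and in the odd case, $-8-0=-8$. The odd case matches the stated $-8$ cleanly; the even case forces a check of the value $s$ for $t_C$: here I must verify whether $C$ is genuinely separating and whether the manifold identification is the primitive relator (power one) or the full squared word, since the claimed answer $-4$ means $c_g=+4,\,s=0$, i.e.\ $C$ should be treated as nonseparating in this count or the signature/blowdown bookkeeping absorbs the $t_C$ contribution. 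Reconciling this sign bookkeeping — precisely how $s$ counts the central curve $C$ and how it interacts with the Meyer-cocycle value on the full relator versus its factors — is where the careful work lies, and I would resolve it by direct appeal to the Endo–Nagami computation for the Matsumoto relation rather than rederiving Meyer's cocycle from scratch.
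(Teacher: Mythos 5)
Your plan hinges on the translation ``$c_g(\rho)=-\sigma(M)$'', and that translation is wrong; this is the source of the unresolved discrepancy you yourself flag in part (iii). The correct statement (this is exactly Theorem~\ref{t11} combined with the definition $I_g(\rho)=-c_g(\rho)-s(\rho)$, i.e.\ Meyer's signature formula for Lefschetz fibrations rather than for surface bundles) is $\sigma(M)=I_g(\rho)=-c_g(\rho)-s(\rho)$: each separating vanishing cycle contributes a local correction to the signature, so $c_g$ alone does not compute $-\sigma(M)$ unless $s(\rho)=0$. With the correct identity, part (iii) is immediate and requires no bookkeeping of $s$ at all: the relators in question are the monodromies of the generalized Matsumoto fibrations, whose total spaces $M_g$ are $\Sigma_k\times\mathbb{S}^2\#\,4\overline{\C\P^2}$ for $g=2k$ and $\Sigma_k\times\mathbb{S}^2\#\,8\overline{\C\P^2}$ for $g=2k+1$, so $I_g$ of the relator equals $\sigma(M_g)=-4$, respectively $-8$, directly. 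Your route instead sets $c_g=+4$ and then subtracts $s=2$ in the even case, producing $-6$; the resolution is not that $C$ ``should be treated as nonseparating'' but that your value of $c_g$ already double-counts the separating contribution (correctly, $c_g=2$ and $s=2$ here). Since you leave this contradiction standing and defer to the literature, part (iii) as written is not a proof.

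Two smaller points. In part (ii), the assertion $s(L)=0$ is an assumption about the embedding of the four-holed sphere (indeed, in Section~\ref{S4} of this paper one of the lantern substitutions involves the separating curves $C$ and $C'$), so your computation of $c_g(L)$ from $I_g(L)$ is only valid for the standard all-nonseparating configuration treated by Endo--Nagami. In part (i), $c_g(a)=0$ for a single separating twist is part of how Endo and Nagami \emph{define} the extension of $I_g$ to $\mathcal{F}$, not something that follows from the cocycle being ``trivial on bounding classes''; citing it is fine, deriving it as you sketch is not. For what it is worth, the paper itself offers no proof of this lemma --- it refers entirely to \cite{e11} --- so the honest options are either a bare citation or the Theorem~\ref{t11}-based argument above for part (iii) together with citations for (i) and (ii).
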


\begin{theorem}\label{t11}
Let $f:X \to \mathbb{S}^{2}$ be a genus-$g$ Lefschetz fibration with the
monodromy $t_{c_1}t_{c_2}\cdots t_{c_n}$, so that  $c_{1}c_{2}\cdots c_{n} \in Ker(\varrho)$
is a positive relator. Then the signature of the $4$-manifold $X$ is equal
to the signature of $c_{1}c_{2}\cdots c_{n}$, i.e.,
\[
\sigma(X)=I_{g}(c_{1}c_{2}\cdots c_{n}).
\]
\end{theorem}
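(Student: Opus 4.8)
The plan is to compute $\sigma(X)$ through Meyer's signature cocycle $\tau_g$ and to localize the contribution of each singular fiber, so that the topological signature coincides with the algebraically defined invariant $I_g$.

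First I would cut the base. Write $\mathbb{S}^2 = D_0 \cup D_\infty$, where $D_0$ is a disk containing all $n$ critical values and $D_\infty$ is the complementary disk over which $f$ is trivial. Then $f^{-1}(D_\infty) \cong \Sigma_g \times D^2$ has zero signature, and since $X$ is obtained by gluing $f^{-1}(D_0)$ and $f^{-1}(D_\infty)$ along their entire common boundary $f^{-1}(\partial D_0)$, ordinary Novikov additivity applies directly and yields $\sigma(X) = \sigma(f^{-1}(D_0))$.

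Next I would compute $\sigma(f^{-1}(D_0))$ by refining the decomposition. Choosing a Hurwitz system in $D_0$, I would subdivide $D_0$ into subregions, each containing a single critical value, so that the restriction of $f$ away from the singular fibers is a genuine $\Sigma_g$-bundle whose boundary monodromy factors as $t_{c_1}\cdots t_{c_n}$. Reassembling the signatures of these pieces forces a Wall non-additivity correction along each gluing surface; by Meyer's theorem these corrections are governed by the cohomology class of $\tau_g$ and accumulate, over the partial products $\phi_i = t_{c_1}\cdots t_{c_i}$, into precisely the value recorded by the homomorphism $c_g$ on the relator $W = c_1 c_2 \cdots c_n$. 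Thus the homological part of the signature is $-c_g(W)$.

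It then remains to account for the local models at the singular fibers. Because $\tau_g$ detects only the symplectic action on $H_1(\Sigma_g)$, a nonseparating vanishing cycle is completely captured by the previous step and contributes nothing further. A separating vanishing cycle, by contrast, gives a reducible singular fiber whose Dehn twist acts trivially on homology and is therefore invisible to $\tau_g$; its genuine local contribution is $-1$, in agreement with the value $I_g$ assigns to a single separating twist in Lemma~\ref{l1}(i). Summing over all separating vanishing cycles produces exactly $-s(W)$, so that $\sigma(X) = -c_g(W) - s(W) = I_g(W)$. The hard part will be the middle step: one must show that the failure of Novikov additivity under each gluing is controlled exactly by Meyer's cocycle $\tau_g$, so that the accumulated Wall correction terms assemble into the telescoping sum defining $c_g$ with nothing left over. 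This is the substance of Meyer's computation of the signature class; once it is in place, the vanishing of the product piece and the separating-twist bookkeeping are routine.
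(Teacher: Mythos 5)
The paper gives no proof of this theorem; it defers entirely to Endo--Nagami~\cite{e11} (see also Ozbagci~\cite{oz}), and your sketch is precisely the argument underlying those references: Novikov additivity to discard $f^{-1}(D_\infty)$, Wall non-additivity corrections at the internal gluings identified with Meyer's cocycle $\tau_g$ so that the telescoping sum over the partial products yields $-c_g(W)$, and the local signatures $0$ (irreducible fiber) and $-1$ (reducible fiber) yielding $-s(W)$. The two facts you black-box --- that the Wall correction term (a Maslov-type index of Lagrangians in $H_1(\Sigma_g;\mathbb{R})$) coincides with $\tau_g$, and the local computation $\sigma=-1$ for a neighborhood of a separating vanishing cycle --- are exactly the content of the cited sources, so your outline is correct and is essentially the same proof.
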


\subsection{Symplectic Fiber Sums} 
This subsection explains the symplectic fiber sum operation and two useful criteria to determine the minimality of a fiber sum.

 Let $X_{i}$ be a closed oriented smooth manifold of dimension $4$ containing a smoothly
embedded surface $\Sigma$ of genus $g\geq1$ such that the surface $\Sigma$ has zero
self-intersection in $X_i$ and represents a homology class of infinite order for each $i=1,2$.
 \textit{The generalized fiber sum} $X_1\#_{\varphi}X_2$ along closed embedded genus-$g$
surfaces $\Sigma$ is defined as
$(X_1\setminus  \upsilon \Sigma)\cup_{\varphi}(X_2 \setminus \upsilon \Sigma)$,
where $\upsilon \Sigma\cong\Sigma\times D^{2}$ denotes
a tubular neighbourhood of the surface $\Sigma$ in both $X_1$ and $X_2$ and the gluing map $\varphi$ is an
orientation-reversing and fiber preserving self-diffeomorphism of $S^{1}\times \Sigma$.

For symplectic manifolds $X_i$ and embedded symplectic subsurfaces in $X_i$ for each $i=1,2$, 
Gompf~\cite{g1} showed that the resulting manifold $X_1\#_{\varphi}X_2$ admits a symplectic
structure.
 
Let $e(X)$ be the Euler characteristic of a manifold $X$. For $X_1\#_{\varphi}X_2$, we have the following lemma.
 \begin{lemma}\label{lfib}
  Let $X_1\#_{\varphi}X_2$ be the fiber sum of $X_1$ and $X_2$ along closed
  embedded surface $\Sigma$ of genus $g$ ($g\geq1$) determined by $\varphi$. Then 
 \item{(i)} $e(X_1\#_{\varphi}X_2)=e(X_1)+ e(X_2)-2e(\Sigma),$
 \item{(ii)} $\sigma(X_1\#_{\varphi}X_2)=\sigma(X_1)+\sigma(X_2).$
 \end{lemma}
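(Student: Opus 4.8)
The plan is to treat both parts by the standard cut-and-paste principle: the fiber sum is obtained by excising the tubular neighbourhoods $\upsilon\Sigma \cong \Sigma \times D^{2}$ from each $X_{i}$ and regluing the pieces along their common boundary $\partial(\upsilon\Sigma) \cong \Sigma \times S^{1}$, which is a closed oriented $3$-manifold. Since both the Euler characteristic and the signature are additive under gluing along such a boundary, neither quantity depends on the particular gluing diffeomorphism $\varphi$, so the computation reduces to bookkeeping on the individual pieces.

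For part (i), I would use that the Euler characteristic is additive for a decomposition $Z = A \cup_{C} B$ with $C = A \cap B$, namely $e(Z) = e(A) + e(B) - e(C)$. Applying this to $X_{i} = (X_{i} \setminus \upsilon\Sigma) \cup (\upsilon\Sigma)$ along $C = \Sigma \times S^{1}$, and using $e(\Sigma \times S^{1}) = e(\Sigma)\,e(S^{1}) = 0$ together with $e(\upsilon\Sigma) = e(\Sigma \times D^{2}) = e(\Sigma)$, I obtain $e(X_{i} \setminus \upsilon\Sigma) = e(X_{i}) - e(\Sigma)$. Applying the same additivity to $X_{1}\#_{\varphi}X_{2} = (X_{1} \setminus \upsilon\Sigma) \cup_{\varphi} (X_{2} \setminus \upsilon\Sigma)$ along $\Sigma \times S^{1}$ yields $e(X_{1}\#_{\varphi}X_{2}) = e(X_{1} \setminus \upsilon\Sigma) + e(X_{2} \setminus \upsilon\Sigma) - 0$, and substituting the two identities above gives exactly $e(X_{1}) + e(X_{2}) - 2e(\Sigma)$.

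For part (ii), I would invoke Novikov additivity of the signature: if a closed oriented $4$-manifold is cut along a separating closed oriented $3$-manifold, its signature is the sum of the signatures of the two pieces. Applied to the same decompositions as above, this gives $\sigma(X_{i}) = \sigma(X_{i} \setminus \upsilon\Sigma) + \sigma(\upsilon\Sigma)$ and $\sigma(X_{1}\#_{\varphi}X_{2}) = \sigma(X_{1} \setminus \upsilon\Sigma) + \sigma(X_{2} \setminus \upsilon\Sigma)$. The one nontrivial input is that $\sigma(\upsilon\Sigma) = \sigma(\Sigma \times D^{2}) = 0$: the group $H_{2}(\Sigma \times D^{2})$ is generated by the class $[\Sigma \times \mathrm{pt}]$, whose self-intersection equals the self-intersection of $\Sigma$ in $X_{i}$, which is zero by hypothesis, so the relevant intersection form is identically zero. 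Hence $\sigma(X_{i} \setminus \upsilon\Sigma) = \sigma(X_{i})$, and combining these gives $\sigma(X_{1}\#_{\varphi}X_{2}) = \sigma(X_{1}) + \sigma(X_{2})$.

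The main obstacle is making the signature additivity rigorous: Novikov additivity is the essential tool, and its hypotheses—a separating closed oriented $3$-manifold along which to cut—are met here by $\Sigma \times S^{1}$. Beyond that, the only genuine content is the vanishing $\sigma(\Sigma \times D^{2}) = 0$, which is precisely where the zero self-intersection assumption on $\Sigma$ enters; everything else is routine additivity together with the elementary fact $e(S^{1}) = 0$.
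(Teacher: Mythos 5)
Your proof is correct and is exactly the standard cut-and-paste argument (inclusion--exclusion for $e$ plus Novikov additivity for $\sigma$, with $e(\Sigma\times S^{1})=0$ and $\sigma(\Sigma\times D^{2})=0$) that the paper implicitly relies on; the paper itself states Lemma~\ref{lfib} without proof, as a standard fact. The only cosmetic remark is that the vanishing $\sigma(\Sigma\times D^{2})=0$ is automatic from the product structure, the zero self-intersection hypothesis having already been used to identify $\upsilon\Sigma$ with $\Sigma\times D^{2}$.
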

 
 One can determine the minimality of a symplectic fiber sum using the following theorem:
 
\begin{theorem}\cite{u,do}\label{t1}
Let $(X, \omega_{X})$ and $(Y, \omega_{Y})$ be two symplectic $4$-manifolds
containing an embedded symplectic surface $S$ of genus $g\geq 0$ and $M$ be the
symplectic fiber sum $X\#_{S}Y$. 
\begin{enumerate}
\item The $4$-manifold $M$ is not minimal if
\begin{enumerate}
\item $X \setminus S_{X}$ or $Y\setminus S_{Y}$ contains an embedded
symplectic $(-1)$-sphere, where $S_{X}\subset X$ and $S_{Y}\subset Y$ are
copies of the surface $S$,\\
or
\item $X\#_{S}Y=Z\#_{S_{\C\P^{2}}}\C\P^{2}$ with $S_{\C\P^{2}}$ 
an embedded $+4$-sphere in class 
$[S_{\C\P^{2}}]=2[H]\in H_{2}(\C\P^{2};\mathbb{Z})$ and $Z$ 
has at least two disjoint exceptional spheres $E_{i}$ so that  each  $E_i$ meets 
the submanifold $S_{Z}\subset Z$  transversely and positively in a single point with 
$[E_{i}]\cdot[S_{X}]=1$.
\end{enumerate}
\item If $X\#_{S}Y=Z\#_{S_{B}}B$, where $B$ is an $\mathbb{S}^{2}$-bundle over 
a surface of genus-$g$ and $S_{B}$ is a section
of this bundle, then $M$ is minimal if and only if $Z$ is minimal.
 \item $M$ is minimal in all other cases.
\end{enumerate}
\end{theorem}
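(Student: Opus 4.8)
The plan is to detect minimality through Gromov invariants and then to transport that detection across the symplectic sum by means of the relative Gromov--Witten gluing formula, essentially following the strategy of \cite{u}. The starting point is Taubes' characterization: a symplectic $4$-manifold is minimal if and only if it contains no symplectically embedded sphere of self-intersection $-1$, and such a sphere exists precisely when there is a class $e\in H_2(M;\Z)$ with $e^2=-1$ and $\langle c_1(M),e\rangle=1$ (the latter forced by adjunction) carrying a nonvanishing Gromov invariant. When one of the summands has $b_2^+=1$, which occurs here since $\C\P^{2}$ is allowed as a piece, one invokes the refinements of Li and Liu in place of Taubes. Thus I would reduce the entire statement to deciding which exceptional classes of $M=X\#_S Y$ can carry a nonzero Gromov invariant $Gr_M(e)$.

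First I would set up the Ionel--Parker degeneration: deforming the symplectic form so that the gluing region around $S$ pinches, $M$ degenerates to the singular union $X\cup_S Y$, and any pseudoholomorphic representative of $e$ converges to a connected relative stable map with components in $X$, in $Y$, and in the cylinder over $S$. The symplectic sum formula then expresses $Gr_M(e)$ as a finite sum, over splittings $e=e_X\# e_Y$ with $e_X\in H_2(X)$ and $e_Y\in H_2(Y)$ matching along $S$, of products of relative Gromov--Witten invariants of the pairs $(X,S)$ and $(Y,S)$. A nonzero $Gr_M(e)$ therefore forces at least one splitting whose two relative factors are both nonzero.

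The main dichotomy is on the intersection number $d:=e\cdot[S]$, which, since $[S]^2=0$ on each side, equals the total order of tangency of the limiting curve with $S$. Because all intersections of a pseudoholomorphic curve with the symplectic surface $S$ are positive, $d=0$ forces the (connected) limiting curve to be disjoint from $S$, hence contained in $X\setminus S$ or $Y\setminus S$, producing an embedded symplectic $(-1)$-sphere there; this is conclusion (1a). If $d\neq 0$ the configuration genuinely crosses the neck, and this is where the real work lies. Using rationality of the glued curve together with the relative adjunction and index formulas to bound the admissible $e_X$ and $e_Y$, I would show that the only crossing configuration compatible with $e^2=-1$, genus $0$, and the matching condition along $S$ is the configuration of (1b): one side is $(\C\P^{2},\text{a conic in class }2[H])$, a line $[H]$ meets the conic in two points, and splicing this line to two exceptional spheres $E_1,E_2$ of the other piece $Z$ --- each meeting $S_Z$ transversally once, as recorded by $[E_i]\cdot[S_X]=1$ --- assembles a chain $E_1\cup[H]\cup E_2$ that closes up to an embedded symplectic $(-1)$-sphere in $M$. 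I expect this elimination of all other crossing configurations to be the principal obstacle, since it requires positivity of intersections together with a careful dimension count to preclude higher-degree or reducible alternatives; the genus-zero case of the sum is the most delicate here and is exactly what is settled in \cite{do}.

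For part (2), I would show that summing $Z$ with an $\mathbb{S}^2$-bundle $B$ along a section $S_B$ neither creates nor destroys exceptional classes. An exceptional sphere of $Z$ disjoint from $S_Z$ persists into $M$, so $Z$ nonminimal implies $M$ nonminimal. Conversely, every rational curve in the ruled surface $B$ is homologous to a combination of the fiber and the section $S_B$, whose relevant intersection and tangency data with $S_B$ are completely controlled; feeding this into the sum formula shows that any Gromov-nontrivial exceptional class of $M$ must restrict to such a class on the $Z$ side, the $B$ side contributing only fiber-type components of square zero. Hence $M$ nonminimal implies $Z$ nonminimal, giving the stated equivalence. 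Finally, conclusion (3) follows by elimination: if none of (1a), (1b), or the bundle situation of (2) holds, the dichotomy above leaves no admissible splitting with both relative factors nonzero, so $M$ admits no Gromov-nontrivial exceptional class and is therefore minimal.
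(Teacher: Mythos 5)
There is no in-paper proof to compare against: the paper imports this theorem verbatim from Usher \cite{u} and Dorfmeister \cite{do}, so your proposal must be judged against those references. In outline you have reconstructed their strategy correctly: minimality is detected through Taubes' Gromov invariants of exceptional classes (with the Li--Liu refinement when $b_2^+=1$), and one analyzes limits of pseudoholomorphic exceptional spheres under the neck-stretching degeneration of $X\#_S Y$ into $X\cup_S Y$, the trichotomy of the theorem corresponding to the possible limit configurations. The easy implications in part (1) are also handled correctly in spirit: a symplectic $(-1)$-sphere disjoint from $S$ persists into the sum, and in case (1b) the chain $E_1\cup H\cup E_2$ splices to an embedded $(-1)$-sphere.

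As a proof, however, the proposal is incomplete precisely where the work lies, and it contains one genuine error. The error: you assert that $[S]^2=0$ on each side, but the theorem assumes no such thing --- Gompf's sum only requires $[S_X]^2=-[S_Y]^2$, and case (1b), which you then analyze, is exactly the $(-4)/(+4)$ situation of a conic in $\C\P^2$, contradicting your own normalization. The quantity that is actually well-defined and on which the dichotomy runs is the total contact order of the limiting relative stable map with $S$; the pairing $e\cdot[S]$ computed in $M$ need not even make sense, since $[S]$ does not in general survive into $M$. The gap: the elimination of every neck-crossing configuration other than (1b), and the $\mathbb{S}^2$-bundle analysis in part (2), are deferred with ``I would show'' --- but this classification, carried out via positivity of intersections, relative adjunction, genus-zero index constraints and McDuff's structure theory of rational and ruled pieces, constitutes essentially the entire content of \cite{u} and \cite{do}, not a routine verification. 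One further caution: Taubes' invariant counts embedded, possibly disconnected curves including multiply covered tori, whereas the Ionel--Parker formula governs relative Gromov--Witten invariants; Usher accordingly argues by direct Gromov compactness for stretched-neck almost complex structures applied to embedded exceptional spheres, rather than by feeding $Gr$ wholesale into a sum formula, a mismatch your outline glosses over.
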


The next proposition will also be useful for us
to determine the minimality of some Lefschetz fibrations constructed in this paper. 

\begin{proposition}\cite{bki}\label{p1}
Let $(X,f)$ be a Lefschetz fibration associated to a factorization
$W=W_{1}^{\phi}W_2$ in $\Mod_{g}$, where $\phi$ is any mapping
class and $W_1$, $W_2$ are positive factorizations of the identity in $\Mod_{g}$.
Then the  $4$-manifold $X$ is minimal.
\end{proposition}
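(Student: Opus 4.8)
The plan is to realize $X$ as a symplectic fiber sum of two \emph{closed} Lefschetz fibrations and then feed the result into Usher's minimality criterion, Theorem~\ref{t1}. First I would let $f_i:X_i\to \mathbb{S}^{2}$ be the genus-$g$ Lefschetz fibrations determined by the positive factorizations $W_1=1$ and $W_2=1$, and let $F$ denote a regular fiber, a closed genus-$g$ surface. Since the given factorization is $W=W_1^{\phi}W_2=\phi W_1\phi^{-1}W_2$, conjugating globally by $\phi^{-1}$ turns it into $W_1\,W_2^{\phi^{-1}}$, which is precisely the monodromy of the twisted fiber sum of $f_1$ and $f_2$ recalled in Section~\ref{S2}; as global conjugation does not change the fibration for $g\geq 2$, it follows that $X\cong X_1\#_{F}X_2$, built by gluing $X_1\setminus \nu F$ and $X_2\setminus \nu F$ along their boundaries. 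By Gompf~\cite{gs} each $X_i$ (with $g\geq 2$) carries a symplectic form making $F$ a symplectic surface, and this is exactly the symplectic fiber sum along the symplectic genus-$g$ surface $F$, so Theorem~\ref{t1} applies with $S=F$.

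Next I would dispose of the three non-minimality possibilities of Theorem~\ref{t1}. Case (2), where $X$ would be expressed as $Z\#_{S_B}B$ for an $\mathbb{S}^{2}$-bundle $B$ over a genus-$g$ surface and a section $S_B$, cannot occur, because the two summands of our fiber sum are total spaces of \emph{nontrivial} Lefschetz fibrations (each $W_i$ contains at least one Dehn twist, hence a singular fiber), so neither piece is an $\mathbb{S}^{2}$-bundle cut along a section. Case (1b), where one summand is $\C\P^{2}$ glued along a conic $S_{\C\P^{2}}$ with $[S_{\C\P^{2}}]=2[H]$, is impossible since such a conic is a sphere, whereas the gluing surface $F$ has genus $g\geq 2>0$.

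The crux, and the step I expect to be the principal obstacle, is ruling out Case (1a): I must show that neither $X_1\setminus \nu F$ nor $X_2\setminus \nu F$ contains an embedded symplectic $(-1)$-sphere. Equivalently, I claim that in any relatively minimal genus-$g$ ($g\geq 1$) Lefschetz fibration over $\mathbb{S}^{2}$, every symplectic exceptional sphere $E$ satisfies $[E]\cdot[F]\geq 1$, which is exactly what forbids $E$ from being pushed into the fiber complement. To prove this I would pick an almost complex structure $J$ compatible with the symplectic form for which the fibers are $J$-holomorphic and $E$ is realized by a $J$-holomorphic sphere. If $[E]\cdot[F]=0$, then since $E$ and the (irreducible) fiber $F$ are both $J$-holomorphic they are either disjoint or share a component; sharing a component forces $E=F$, impossible as $E^{2}=-1\neq 0=F^{2}$, so $E$ is disjoint from $F$ and hence, as all fibers are homologous, from every regular fiber. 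Being connected, $E$ would then be a $(-1)$-sphere component of a single singular fiber, contradicting relative minimality; thus $[E]\cdot[F]\geq 1$. I would stress that this argument does \emph{not} require $X_1$ or $X_2$ to be minimal — in the applications they are not, e.g.\ Matsumoto's total space $\Sigma_{k}\times \mathbb{S}^{2}\# 4\overline{\C\P^{2}}$ — rather, every exceptional sphere in the summands necessarily meets the fiber and is therefore destroyed upon removing $\nu F$. With all three non-minimality cases excluded, Theorem~\ref{t1} yields that $X$ is minimal.
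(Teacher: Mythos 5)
The paper offers no proof of this proposition at all---it is quoted from Baykur--Korkmaz~\cite{bki}---so there is no in-text argument to compare against; but your strategy (realize $X$ as the twisted fiber sum $X_1\#_F X_2$ of the fibrations with monodromies $W_1$ and $W_2$, feed this into Usher's Theorem~\ref{t1}, and dismiss cases (1b) and (2) because the gluing surface has genus $g\geq 2$ while a nontrivial Lefschetz fibration has $e(X_i)=2(2-2g)+n$ with $n\geq 1$, so $X_i$ is not an $\mathbb{S}^{2}$-bundle) is exactly the route of~\cite{bki}, and those parts are sound.

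The gap is in case (1a), at the sentence ``pick an almost complex structure $J$ compatible with the symplectic form for which the fibers are $J$-holomorphic \emph{and} $E$ is realized by a $J$-holomorphic sphere.'' Given an embedded symplectic $(-1)$-sphere $E\subset X_i\setminus \nu F$, there is no reason such a simultaneous $J$ exists: a $J$ adapted to the fibration is a very special, non-generic choice, and $E$ meets the other fibers in a way you do not control, so you cannot make both holomorphic by fiat. The correct substitute is Taubes' theory: $[E]$ is an exceptional class, hence has nonvanishing Gromov invariant, so for the fibration-adapted $J$ it is represented by a possibly reducible, possibly multiply covered $J$-holomorphic curve $C=\sum m_iC_i$. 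Positivity of intersections together with $[E]\cdot[F]=0$ then forces every component $C_i$ into a fiber, and one must still check---using that the two components $A,B$ of a reducible fiber satisfy $A^2=B^2=-1$ with $g(A),g(B)\geq 1$ by relative minimality, against $K\cdot E=-1$ from adjunction---that no nonnegative combination of fiber components represents an exceptional class. Your embedded-sphere shortcut skips both of these steps; alternatively you could simply quote the known statement that every exceptional class of a relatively minimal Lefschetz fibration pairs positively with the fiber class (Sato~\cite{sa1}, Dorfmeister--Zhang~\cite{doz}), which is what renders case (1a) vacuous. With that repair the argument is complete.
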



\section{Construction of  genus-$3$ Lefschetz fibrations}\label{S3}
In this section, we explicitly construct a positive factorization for  a genus-$3$
Lefschetz fibration over $\mathbb{S}^{2}$ whose total space is diffeomorphic to
$\mathbb{T}^{2}\times \mathbb{S}^{2}\# 6\overline{\C\P^{2}}$. To do this, we follow very closely the breeding technique and the same building blocks in~\cite{b1}. Note that this breeding technique is also used to construct the smallest
hyperelliptic genus-$3$ Lefschetz fibration by Baykur and Korkmaz~\cite{bk1} and to get holomorphic Lefschetz pencils on the four-torus~\cite{hh}. Following the positive factorization for a genus-$3$ fibration on $\mathbb{T}^{2}\times \mathbb{S}^{2}\# 6\overline{\C\P^{2}}$, we also construct some positive factorizations which yield simply-connected genus-$3$ Lefschetz fibrations over a sphere using the fiber sum operation and the lantern substitution.
\begin{figure}[h]
\begin{center}
\scalebox{0.35}{\includegraphics{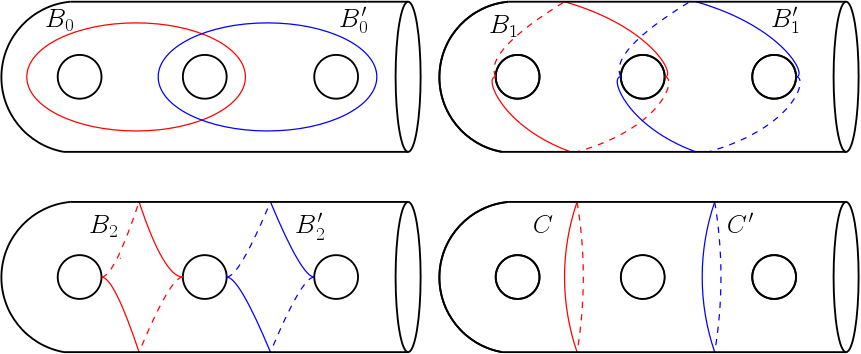}}
\caption{The curves $B_i$, $B_i^{\prime}$, $C$, $C^{\prime}$.}
\label{Con}
\end{center}
\end{figure}

\ Consider the genus-$3$ surface $\Sigma_{3}^{1}$ represented in Figure~\ref{Con}. 
The lifting of $W_2$ constructed by Hamada and the 
embeddings of the genus-$2$ surfaces $\Sigma_{2}^{1}$  and $\Sigma_{2}^{2}$
in $\Sigma_{3}^{1}$  
give rise to the following identities in $\Mod_{3}^{1}$:
\begin{eqnarray} \label{eq1}
(t_{B_{0}}t_{B_{1}}t_{B_{2}}t_{C})^{2}&=&(t_{C}t_{B_{0}}t_{B_{1}}
t_{B_{2}})^{2}=t_{C^{\prime}}, \\
\label{eq2}(t_{B_{0}^{\prime}}t_{B_{1}^{\prime}}t_{B_{2}^{\prime}}
t_{C^{\prime}})^{2}&=&t_{C}t_{\delta},
\end{eqnarray}
where the curves $B_i$, $B_{i}^{\prime}$, $C$ and $C^{\prime}$ are as shown 
in Figure~\ref{Con} and $\delta$ is a curve parallel to the boundary component 
of $\Sigma_{3}^{1}$. The first identity in (\ref{eq1}) comes from the commutativity of Dehn twists about disjoint curves $C$ and $C^{\prime}$ and the 
second identity in (\ref{eq1}) is obtained by capping off one boundary component in Hamada's lifting. 
Using the fact that  $t_{C}$ and $t_{C^{\prime}}$ commute, 
we get the following relation in $\Mod_{3}^{1}$:
\begin{eqnarray*}
t_{B_{0}}t_{B_{1}}t_{B_{2}}t_{C}t_{B_{0}}t_{B_{1}}t_{B_{2}}t_{B_{0}^{\prime}}
t_{B_{1}^{\prime}}t_{B_{2}^{\prime}}t_{C^{\prime}}t_{B_{0}^{\prime}}t_{B_{1}^{\prime}}
t_{B_{2}^{\prime}}t_{C^{\prime}}t_{C}t_{C}^{-1}t_{C^{\prime}}^{-1}=t_{\delta}.
\end{eqnarray*}

Finally, we get the following identity in $\Mod_{3}^{1}$:
\begin{eqnarray}\label{eq3}
t_{B_{0}}t_{B_{1}}t_{B_{2}}t_{C}t_{B_{0}}t_{B_{1}}t_{B_{2}}t_{B_{0}^{\prime}}
t_{B_{1}^{\prime}}t_{B_{2}^{\prime}}t_{C^{\prime}}
t_{B_{0}^{\prime}}t_{B_{1}^{\prime}}t_{B_{2}^{\prime}}=t_{\delta}
\end{eqnarray}
 consisting of the product
of positive Dehn twists about $12$ nonseparating curves $B_i$, $B_{i}^{\prime}$
and $2$ separating simple closed curves $C$ and $C^{\prime}$.
Let $W$ be the positive factorization of $t_{\delta}$ in (\ref{eq3}) and  
let $X$ denote the smooth $4$-manifold admitting the genus-$3$ Lefschetz fibration 
over $\mathbb{S}^{2}$ with the monodromy $W$. Observe that it admits a section of self-intersection $-1$.

For later use, we give an explicit presentation of $\pi_1(X)$ induced by the monodromy $W$.

Let  $a_i$ and $b_i$ be the generators of $\pi_{1}(\Sigma_{3})$ as 
shown in Figure~\ref{C1}.
By the theory of Lefschetz fibrations, since the Lefschetz fibration  $(X,f)$ with the monodromy $W$ has a section,
$\pi_{1}(X)$ is isomorphic to the quotient of $\pi_{1}(\Sigma_{3})$
by the normal subgroup generated by vanishing cycles of $(X,f)$ ~\cite{gs}.

We get the following relations coming from the vanishing cycles:
\begin{eqnarray}
 \label{eq4} B_{0}&=&b_{1}b_{2}=1,\\
 \label{eq5} B_{1}&=&a_{2}^{-1}[a_{3},b_{3}]b_{2}^{-1}b_{1}^{-1}a_{1}^{-1}=1,\\
 \label{eq6} B_{2}&=&a_{2}^{-1}[a_{1},b_{1}^{-1}]a_{1}^{-1}=1,\\
 \label{eq7} B_{0}^{\prime}&=&b_{2}b_{3}=1,\\
 \label{eq8}B_{1}^{\prime}&=&a_{3}^{-1}b_{3}^{-1}b_{2}^{-1}a_{2}^{-1}=1,\\
\label{eq9} B_{2}^{\prime}&=&b_{3}a_{3}^{-1}b_{3}^{-1}a_{2}^{-1}=1,\\
 \label{eq10} C&=&[a_{1},b_{1}]=1,\\
 \label{eq11}C^{\prime}&=&[a_{3},b_{3}]=1.
\end{eqnarray}
 In addition, $\pi_{1}(X)$ has the relation
 \begin{eqnarray}
  \label{pi3} b_{3}^{-1}b_{2}^{-1}b_{1}^{-1}(a_1b_1a_{1}^{-1})
 (a_2b_2a_{2}^{-1})(a_3b_3a_{3}^{-1})=1.
\end{eqnarray} 
Thus, $\pi_{1}(X)$ admits a presentation with generators
$a_1,a_2,a_3,b_1,b_2,b_3$ and with the relations
$(\ref{eq4})-(\ref{pi3})$.

The relations (\ref{eq4}) and (\ref{eq7}) give $b_{1}=b_{2}^{-1}=b_{3}$.
It follows from the relations (\ref{eq6}), (\ref{eq7}), (\ref{eq8})
and (\ref{eq10}), we obtain $a_{1}=a_{2}^{-1}=a_{3}$.
We conclude that $\pi_{1}(X)$ is a free abelian group of rank $2$
generated by $a_{1}$ and $b_{1}$.\\

\begin{theorem}\label{t6} 
The $4$-manifold $X$ is diffeomorphic to
$\mathbb{T}^{2}\times \mathbb{S}^{2}\# 6\overline{\C\P^{2}}$.
\end{theorem}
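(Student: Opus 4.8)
The plan is to pin down the diffeomorphism type of $X$ by first computing its characteristic numbers and fundamental group and then invoking the classification of symplectic $4$-manifolds with $b_{2}^{+}=1$. The group $\pi_{1}(X)\cong\mathbb{Z}^{2}$ has already been determined above. For the remaining invariants, the monodromy $W$ in (\ref{eq3}) consists of $14$ Dehn twists, so the fibration has $14$ singular fibers and hence $e(X)=4-4\cdot 3+14=6$. By Theorem~\ref{t11}, $\sigma(X)=I_{3}(W)$. I would compute this by writing $W$, in the free group on isotopy classes, as a product of the relator $\alpha=(B_{0}B_{1}B_{2}C)^{2}(C')^{-1}$ of (\ref{eq1}) and a conjugate of the relator $\beta=(B_{0}'B_{1}'B_{2}'C')^{2}C^{-1}$ of (\ref{eq2}); using the additivity of the Endo--Nagami signature $I_{3}$ under combination of relators in $\mathrm{Ker}\,\varrho$, its conjugation-invariance, and the values in Lemma~\ref{l1}, one finds $\sigma(X)=-6$. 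Together with $b_{1}=2$ this gives $b_{2}=8$, $b_{2}^{+}(X)=1$ and $b_{2}^{-}(X)=7$, already matching the invariants of $\mathbb{T}^{2}\times\mathbb{S}^{2}\#6\overline{\C\P^{2}}$.

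Since the genus is $3\geq 2$, Gompf's theorem~\cite{gs} endows $X$ with a symplectic structure, and $b_{2}^{+}(X)=1$. Because $b_{2}^{-}=7$ the manifold cannot be minimal, so I would blow down a maximal collection of disjoint symplectic $(-1)$-spheres---one being the $(-1)$-section noted above, the others arising from the exceptional spheres of the two Matsumoto building blocks via Hamada's sections---to reach a minimal symplectic $4$-manifold $X_{\min}$. Blowing down changes neither $\pi_{1}$ nor $b_{2}^{+}$, and the number $k$ of blow-downs is forced by $e(X_{\min})=e(X)-k$; consistency with $\sigma(X_{\min})=\sigma(X)+k$ and $c_{1}^{2}=2e+3\sigma$ requires $k=6$, so that $X_{\min}$ is minimal symplectic with $\pi_{1}(X_{\min})\cong\mathbb{Z}^{2}$, $b_{2}^{+}=1$, $e(X_{\min})=0$, $\sigma(X_{\min})=0$ and $c_{1}^{2}(X_{\min})=0$.

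Now I would apply the classification of minimal symplectic $4$-manifolds with $b_{2}^{+}=1$ (work of Liu and of Li via the symplectic Kodaira dimension): such a manifold is rational or ruled, or else has Kodaira dimension zero, and the invariants $b_{2}^{+}=1$, $b_{1}=2$, $c_{1}^{2}=0$ leave only the two $\mathbb{S}^{2}$-bundles over $\mathbb{T}^{2}$ and the bielliptic surfaces. The constraint $\pi_{1}(X_{\min})\cong\mathbb{Z}^{2}$ excludes the bielliptic surfaces (whose fundamental groups are not $\mathbb{Z}^{2}$), so $X_{\min}$ is an $\mathbb{S}^{2}$-bundle over $\mathbb{T}^{2}$. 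Since any two $\mathbb{S}^{2}$-bundles over a fixed base become diffeomorphic after a single blow-up and we have $k=6\geq 1$ exceptional summands, it follows that
\[
X\;\cong\;X_{\min}\#6\overline{\C\P^{2}}\;\cong\;\mathbb{T}^{2}\times\mathbb{S}^{2}\#6\overline{\C\P^{2}},
\]
which is the desired conclusion.

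The main obstacle is the control of the minimal model in the middle step: one must guarantee enough disjoint symplectic $(-1)$-spheres that the blow-downs terminate at a minimal $X_{\min}$ with exactly the invariants above, and then apply the $b_{2}^{+}=1$ classification carefully, using the fundamental-group constraint to rule out the non-ruled minimal models sharing the same Euler characteristic and signature. By contrast, the signature computation via Endo--Nagami additivity is routine once the decomposition of $W$ into the relators $\alpha$ and $\beta$ is in place, and the final blow-up identification of the two $\mathbb{S}^{2}$-bundles is standard.
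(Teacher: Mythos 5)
Your computation of the invariants ($e(X)=6$, $\sigma(X)=-6$ via Endo--Nagami, $\pi_1(X)\cong\mathbb{Z}^2$, hence $(b_2^+,b_2^-,b_1)=(1,7,2)$) matches the paper, and the final identification of a blown-up ruled surface over $\mathbb{T}^2$ with $\mathbb{T}^2\times\mathbb{S}^2\#6\overline{\C\P^{2}}$ is standard. The gap is in the middle step, where you pass to the minimal model. Writing $X\cong X_{\min}\#k\overline{\C\P^{2}}$, the blow-up formulas give $c_1^2(X_{\min})=-6+k$ as an \emph{identity}; they do not ``force'' $k=6$ unless you already know $c_1^2(X_{\min})=0$, which is precisely what has to be proved, so this part of the argument is circular. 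Your appeal to explicit exceptional spheres ``arising from the exceptional spheres of the two Matsumoto building blocks via Hamada's sections'' is also unjustified: $X$ is produced by breeding the two lifted relations inside $\Sigma_3^1$, not by a fiber sum of two Matsumoto fibrations, so those spheres do not obviously persist in $X$. Finally, the classification you invoke --- that a minimal symplectic $4$-manifold with $b_2^+=1$ is rational, ruled, or of Kodaira dimension zero --- is false as stated: Dolgachev and Barlow surfaces are minimal with $b_2^+=1$ and Kodaira dimension $1$ and $2$, respectively.

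The paper closes this gap by a different mechanism, and you need it (or a substitute). Assume $X$ is neither rational nor ruled and let $\widetilde X$ be its minimal model, $X\cong\widetilde X\#k\overline{\C\P^{2}}$. The results of Liu and Taubes give $c_1^2(\widetilde X)\ge 0$, i.e.\ $k\ge 6$, so $X$ would contain at least six disjoint exceptional spheres; but Sato's theorem \cite[Theorem 4.7]{sa1} bounds the number of disjoint exceptional spheres in the total space of a genus-$g$ Lefschetz fibration that is not rational or ruled by $2g-2=4$. This contradiction shows $X$ is rational or ruled, and $b_1=2$ then identifies it as a blow-up of an $\mathbb{S}^2$-bundle over $\mathbb{T}^2$ with $(b_2^+,b_2^-,b_1)=(1,7,2)$. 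Without the Sato bound --- or some other argument excluding the non-ruled minimal models --- your proof does not go through.
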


\begin{proof}
We first compute $\sigma(X)$, the signature of $X$, using 
Endo and Nagami's method given in~\cite{e11}.
(Alternatively, $\sigma(X)$ can be calculated using Ozbagci's algorithm~\cite{oz}).

Consider the relator 
$(B_{0}B_{1} B_{2} C)^2 ( B_{0}^{\prime} B_{1}^{\prime}B_{2}^{\prime}C^{\prime})^2 C^{-1} C^{\prime-1}$
associated to the relation 
\[
W=t_{B_{0}}t_{B_{1}}t_{B_{2}}t_{C}t_{B_{0}}t_{B_{1}}t_{B_{2}}t_{B_{0}^{\prime}}
t_{B_{1}^{\prime}}t_{B_{2}^{\prime}}t_{C^{\prime}}t_{B_{0}^{\prime}}
t_{B_{1}^{\prime}}t_{B_{2}^{\prime}}=1
\] 
in $\Mod_{3}$ obtained by the factorization of $t_{\delta}$ in 
$\Mod_{3}^{1}$  by capping off the boundary component. 
It follows from Theorem~\ref{t11}, the additivity of the signature
and  Lemma~\ref{l1} that we have,
 \begin{eqnarray*}
 \sigma(X)& =& I_{3}\big((B_{0}B_{1}B_{2}C)^{2}
 (B_{0}^{\prime}B_{1}^{\prime}B_{2}^{\prime}C^{\prime})^{2}C^{-1}
 (C^{\prime})^{-1}\big),\\
& = &I_{3}\big((B_{0}B_{1}B_{2}C)^{2}\big)+I_{3}\big((B_{0}^{\prime}B_{1}^{\prime}
B_{2}^{\prime}C^{\prime})^{2}\big)-I_{3}(C)-I_{3}(C^{\prime}),\\
 & = & -4-4-(-1)-(-1)=-6.
 \end{eqnarray*}
The topological invariants of $e(X)$ and $c_{1}^{2}(X)$ are computed as
\begin{eqnarray*}
 e(X)&=&e(\mathbb{S}^{2})e(\Sigma_{3})+\# \rm{singular \hspace{0.12cm} fibers}
 =2(-4)+14
 =6,\\
 c_{1}^{2}(X)&=&2e(X)+3\sigma(X)
 =2(6)+3(-6)7
 =-6.
 \end{eqnarray*}

We now prove that $X$ is a ruled surface. Suppose that $X$ is neither 
rational nor ruled. Let $\widetilde{X}$ be the minimal model of $X$,
so $X\cong\widetilde{X}\#k\overline{\C\P^{2}}$ for some
non-negative integer $k$. It is easy to see that 
\begin{eqnarray*}
c_{1}^{2}(\widetilde{X})=c_{1}^{2}(X)+k=-6+k.
 \end{eqnarray*}
Since $\widetilde{X}$ is a minimal symplectic $4$-manifold that
is neither rational nor ruled, $c_{1}^{2}(\widetilde{X})\geq0$ by
Liu~\cite{liu2} and Taubes~\cite{t2}, so that we have $k\geq6$. 
Moreover, since $X$ has $k\geq 6$ disjoint exceptional spheres, 
it follows from ~\cite[Theorem 4.7]{sa1} that $k\leq2g-2=4$,
which is a contradiction. Therefore, $X$ is either a rational or a ruled surface. 
Since $b_{1}(X)=2$, we conclude that $X$ is diffeomorphic to
(a blow up of) 
a ruled surface with $(b_{2}^{+},b_{2}^{-},b_{1})=(1,7,2)$. Thus, we conclude that $X$ is diffeomorphic to
$\mathbb{T}^{2}\times \mathbb{S}^{2}\# 6\overline{\C\P^{2}}$.
 \end{proof}
 
\begin{figure}[t]
 \scalebox{0.35}{\includegraphics{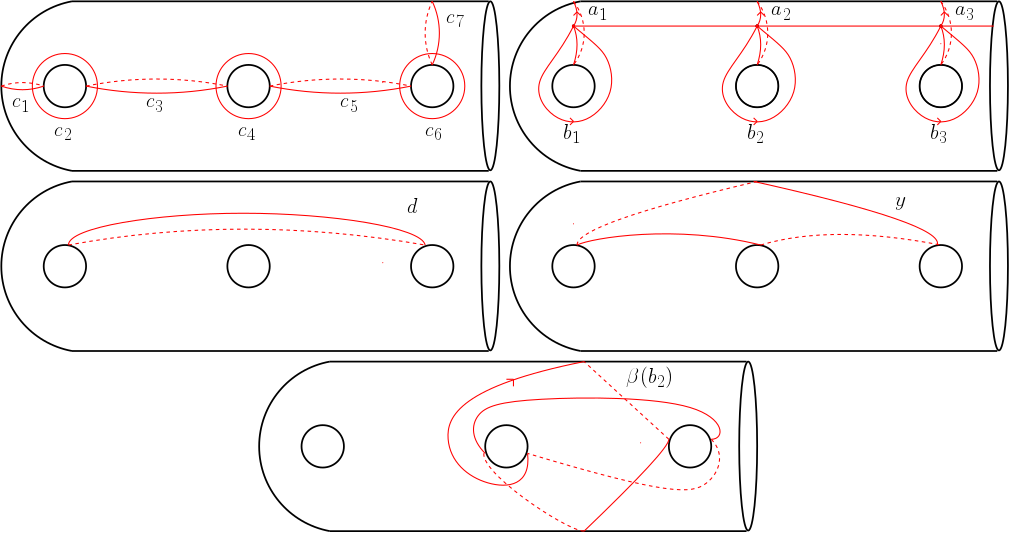}}
 \caption{The curves $c_i$, the generators of $\pi_1(\Sigma_3)$, $d$, $y$ and $\beta(b_2)$.} \label{C1}
 \end{figure}


\subsection{Construction of genus-$3$ Lefschetz fibrations }
In order to construct further genus-$3$ Lefschetz fibrations,  let us rewrite $W$ as 
\[
W=Vt_{B_2^{\prime}}^{2}t_{B_2}^{2},
\]
where 
\[
V=t_{t_{_{B_2}}^{-2}(B_0)}t_{t_{_{B_2}}^{-2}(B_1)}t_{t_{_{B_2}}^{-1}(C)}
t_{t_{_{B_2}}^{-1}(B_0)}t_{t_{_{B_2}}^{-1}(B_1)}t_{B_0^{\prime}}t_{B_1^{\prime}}
t_{t_{_{B_2}}^{\prime}(C^{\prime})}t_{t_{_{B_2^{\prime}}}(B_0^{\prime})}
t_{t_{_{B_2^{\prime}}}(B_1^{\prime})}.
\]

Let
$\alpha=t_{c_{4}}t_{c_3}t_{B_{2}^{\prime}}t_{c_{4}}t_{c_{2}}t_{c_{1}}t_{B_{2}}t_{c_{2}}$
and 
$\beta=  t_{c_{2}}t_{c_{4}}t_{c_5}t_{B_{2}}t_{c_{4}}t_{c_{6}}t_{c_{7}}
 t_{B_{2}^{\prime}}t_{c_{6}}$,
where the curves $c_i$'s are as in Figure~\ref{C1}. It is easy to see that 
$\alpha(B_{2}^{\prime})=c_3$, $\alpha(B_{2})=c_1$,
$\beta(B_{2}^{\prime})=c_7$ and $\beta(B_{2})=c_5$. The conjugations of $W$ with $\alpha$ and $\beta$ give the factorizations 
\begin{eqnarray*}
t_{\delta}=W^{\alpha}=V^{\alpha}t_{\alpha(B_{2}^{\prime})}^{2}t_{\alpha(B_{2})}^{2}=V^{\alpha}t_{c_3}^{2}t_{c_{1}}^{2}
\end{eqnarray*}
and 
\begin{eqnarray*}
t_{\delta}=W^{\beta}=V^{\beta}t_{\beta(B_{2}^{\prime})}^{2}
t_{\beta(B_{2})}^{2}=V^{\beta}t_{c_7}^{2}t_{c_5}^{2}=t_{c_7}^{2}t_{c_5}^{2}V^{\beta}.
\end{eqnarray*}

It follows that
 \begin{eqnarray}\label{xx1}
	t_{\delta}^{2}=W^{\alpha}W^{\beta}=V^{\alpha}t_{c_1}^{2}
	t_{c_{3}}^{2}t_{c_5}^{2}t_{c_{7}}^{2}V^{\beta}=V^{\beta}V^{\alpha}t_{c_1}^{2}t_{c_3}^{2}t_{c_{5}}^{2}t_{c_{7}}^{2}. 
\end{eqnarray}

We see that the curves $\{ c_1,c_3,c_5,c_7\}$ bound 
a sphere with four boundary components, which allows us to use 
the lantern substitution.  
Using the lantern relation $t_{c_1}t_{c_3}t_{c_5}t_{c_7}=t_{d}t_{y}t_{a}$, where the curves $d$ 
and $y$ as in Figure ~\ref{C1} and the curve $a$ is depicted in Figure ~\ref{GM} for $g=3$,  we get the identity 
\begin{eqnarray}
V^{\beta}V^{\alpha}t_{d}t_{y}t_{a}t_{c_1}t_{c_3}t_{c_{5}}
t_{c_{7}}=t_{\delta}^{2}\label{xx2}
\end{eqnarray}
in $\Mod_{3}^{1}$. The identity (\ref{xx2}) still includes the Dehn twist about the curves $\{ c_1,c_3,c_5,c_7\}$, which allows us to 
use one more lantern substitution, then we get the following identity
\begin{eqnarray}
V^{\beta}V^{\alpha}(t_{d}t_{y}t_{a})^2=t_{\delta}^{2}.\label{xx3}
\end{eqnarray}

Let
\[
 \begin{array}{ll}
 
 \U_{1}=\alpha(t_{B_2}^{-2}(B_0))   & \hspace*{2cm} 
 \U'_{1}=\beta (t_{B_2}^{-2}(B_0)) \\
 \U_{2}=\alpha(t_{B_{2}}^{-2}(B_{1}))& \hspace*{2cm}
 \U'_{2}=\beta (t_{B_{2}}^{-2}(B_{1}))\\
 \U_{3}=\alpha(t_{B_2}^{-1}(C)) &\hspace*{2cm}
 \U'_{3}=\beta (t_{B_2}^{-1}(C))\\
 \U_{4}=\alpha(t_{B_2}^{-1}(B_0))&\hspace*{2cm}
 \U'_{4}=\beta (t_{B_2}^{-1}(B_0))\\
 \U_{5}=\alpha(t_{B_2}^{-1}(B_1))&\hspace*{2cm}
 \U'_{5}=\beta (t_{B_2}^{-1}(B_1))\\
 \U_{6}=\alpha(B_0^{\prime})&\hspace*{2cm}
 \U'_{6}=\beta(B_0^{\prime})\\
 \U_{7}=\alpha(B_1^{\prime})&\hspace*{2cm}
 \U'_{7}=\beta (B_1^{\prime})\\
 \U_{8}=\alpha(t_{B_{2}^{\prime}}(C^{\prime}))&\hspace*{2cm}
 \U'_{8}=\beta (t_{B_{2}^{\prime}}(C^{\prime}))\\
 \U_{9}=\alpha(t_{{B_{2}^{\prime}}}(B_0^{\prime}))&\hspace*{2cm}
 \U'_{9}=\beta (t_{B_{2}^{\prime}}(B_0^{\prime}))\\
 \U_{10}=\alpha(t_{B_{2}^{\prime}}(B_1^{\prime}))&\hspace*{2cm}
 \U'_{10}=\beta (t_{B_{2}^{\prime}}(B_1^{\prime}))
 
 \end{array} 
\]
so that (\ref{xx1}), (\ref{xx2}) and (\ref{xx3}) are given, respectively, as
\begin{eqnarray}
	t_{\delta}^{2} &=&  t_{\U'_{1}}t_{\U'_{2}}\cdots t_{\U'_{10}}t_{\U_{1}}
           t_{\U_{2}}\cdots t_{\U_{10}}t_{c_1}^2t_{c_3}^2t_{c_{5}}^2
t_{c_{7}}^2.   \label{x1} \\
t_{\delta}^{2} &=&  t_{\U'_{1}}t_{\U'_{2}}\cdots t_{\U'_{10}}t_{\U_{1}}
          t_{\U_{2}}\cdots t_{\U_{10}} (t_{d}t_{y}t_{a})t_{c_1}t_{c_3}t_{c_{5}}t_{c_{7}}.  \label{x2} \\
  t_{\delta}^{2} &=&  t_{\U'_{1}}t_{\U'_{2}}\cdots t_{\U'_{10}}t_{\U_{1}}
          t_{\U_{2}}\cdots t_{\U_{10}} (t_{d}t_{y}t_{a})^2.    \label{x3}        
\end{eqnarray}

Let $(X_1,f_1)$, $(X_2,f_2)$ and $(X_3,f_3)$  be the genus-$3$ Lefschetz fibrations with the monodromies (\ref{x1}),(\ref{x2}) and (\ref{x3}), respectively.

For later use, up to conjugation and the inversion, 
we write some of the vanishing cycles of $X_1$, $X_2$ and $X_3$ in the fundamental group of $\Sigma_3$, $\pi_1(\Sigma_3)$, with the generating set $\lbrace a_1,a_2,a_3,b_1,b_2,b_3\rbrace$.  

The following vanishing cycles $\U_{i}$ are shown in Figure~\ref{U10}. One may find that

 \begin{align}
  &\label{eq12}\U_{4}=a_1^{-1}a_2b_1^{-1}a_1b_1a_2^{-1}b_3a_3^{-1}
 b_3^{-1}b_2^{-1}a_2^{-1}a_1a_2b_1^{-1}a_1b_1a_2^{-1}b_3a_3^{-1}b_3^{-1}b_2^{-1}a_2^{-1}a_1\\
      &	\hspace*{6.5cm}
      a_2b_1^{-1}a_1^{-2}a_2b_2b_3a_3^{-1}
 b_3^{-1}a_2^{2}b_1^{-1}a_1, \nonumber \\
& \label{eq13}\U_{4}^{\prime}=a_1b_1^{-1}b_2[b_3,a_3]a_2b_2^{-1}b_3^{-1}a_3^{-1}a_2b_2^{-1}a_2b_2^{-1}b_1b_2[b_3,a_3]a_2b_2^{-1}a_3^{-1}a_2b_2^{-1},\\
     &\label{eq16} \U_{6}=a_1^{-1}a_2b_2^{-1}b_1^{-1}a_1b_1b_3a_2
     b_2^{-1}b_3a_3b_3^{-1}a_2b_2^{-1}b_1^{-1},\\  
  &\label{eq14} \U_{7}=a_1^{-1}a_2b_2^{-1}b_1^{-1}a_1b_1b_3a_3a_2b_2^{-1}a_2^{-1}a_1a_2b_2^{-1}b_1^{-1},\\
&\label{eq15}\U_{8}=a_1^{-1}a_2b_2a_2^{-1}a_3^{-1}a_2b_2^{-1}a_2^{-1}a_1b_3a_3^{-1}b_3^{-1}.
 \end{align}
\begin{figure}[h]
\begin{center}
\scalebox{0.20}{\includegraphics{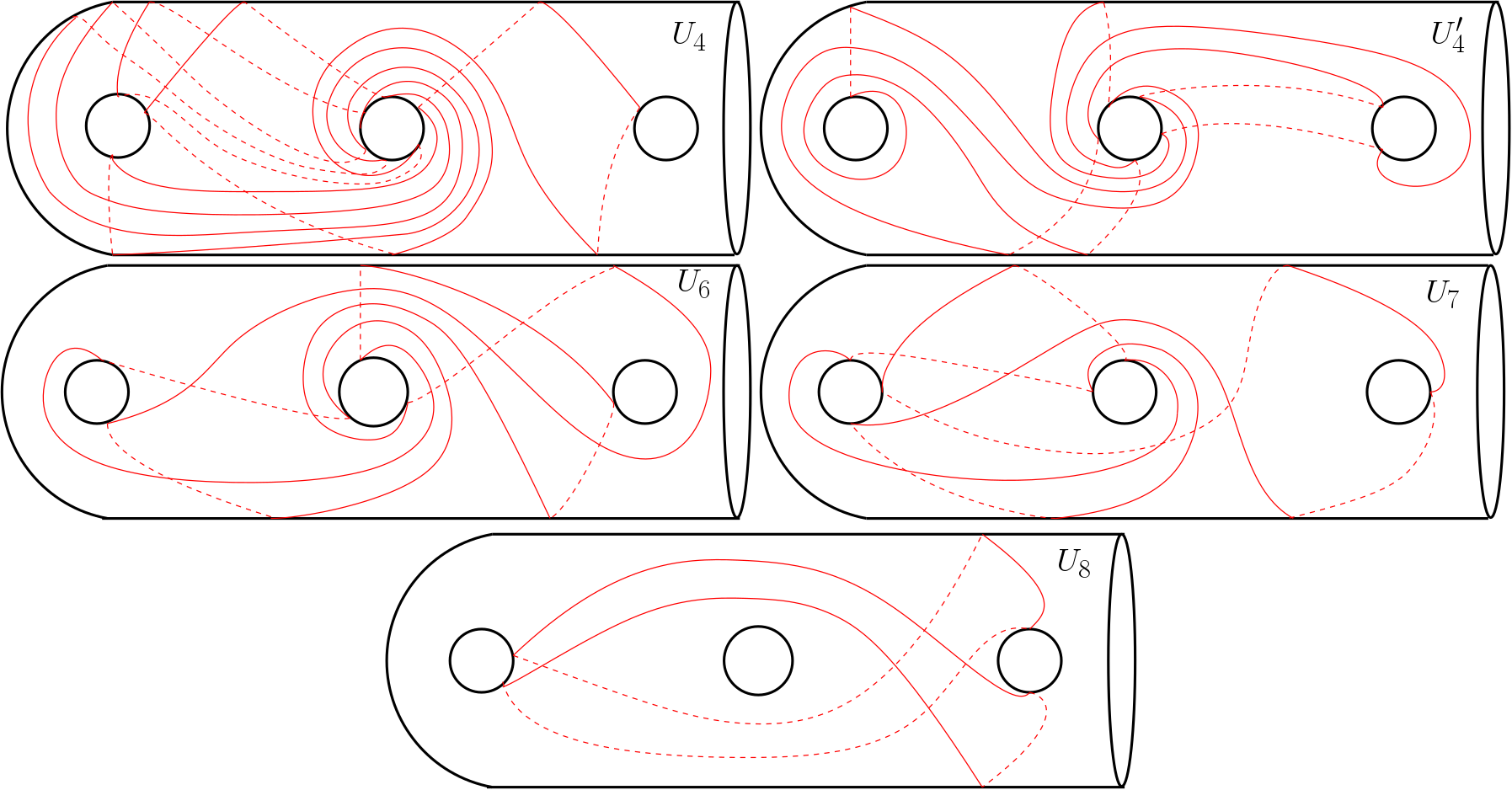}}
\caption{The curves $\U_i$'s for $i=4,6,7,8$ and $\U_4^{\prime}$} \label{U10}
\end{center}
\end{figure}

We now prove that for each $i=1,2,3$, the fundamental group $\pi_{1}(X_i)$ is trivial.

\begin{lemma} \label{pix1} 
 The $4$ manifold $X_1$ is simply connected.
\end{lemma}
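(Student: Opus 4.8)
The plan is to realize $\pi_{1}(X_{1})$ as a quotient of $\pi_{1}(\Sigma_{3})$ and then show the quotient is trivial. Since the factorization (\ref{x1}) is a lift to $\Mod_{3}^{2}$ of a relation in $\Mod_{3}$ equal to $t_{\delta}^{2}$, the fibration $(X_{1},f_{1})$ admits (two disjoint) sections; hence by the standard theory of Lefschetz fibrations~\cite{gs}, $\pi_{1}(X_{1})$ is isomorphic to the quotient of $\pi_{1}(\Sigma_{3})=\langle a_{1},a_{2},a_{3},b_{1},b_{2},b_{3}\mid [a_{1},b_{1}][a_{2},b_{2}][a_{3},b_{3}]\rangle$ by the normal closure of the vanishing cycles of $f_{1}$, namely $\U_{1},\dots,\U_{10}$, $\U'_{1},\dots,\U'_{10}$ and $c_{1},c_{3},c_{5},c_{7}$. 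Thus it suffices to show that imposing the relations $\U_{i}=1$, $\U'_{i}=1$ and $c_{1}=c_{3}=c_{5}=c_{7}=1$, together with the surface relation, collapses $\pi_{1}(\Sigma_{3})$ to the trivial group.

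First I would read off the four relations given by $c_{1},c_{3},c_{5},c_{7}$ from Figure~\ref{C1}: being the images $\alpha(B_{2}),\alpha(B_{2}^{\prime}),\beta(B_{2}),\beta(B_{2}^{\prime})$ of nonseparating curves, each is an explicit word in the generators, and setting them to $1$ yields a first batch of short relations among the $a_{i},b_{i}$. I would then substitute these into the longer relations coming from $\U_{4},\U_{4}^{\prime},\U_{6},\U_{7},\U_{8}$ recorded in (\ref{eq12}), (\ref{eq13}), (\ref{eq16}), (\ref{eq14}) and (\ref{eq15}), and into the analogous expressions for the remaining $\U_{i},\U'_{i}$ obtained from Figure~\ref{U10} in exactly the same way, always reducing with the surface relation. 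The aim of this bootstrapping is to kill the generators one at a time: I expect the relations $c_{j}=1$ combined with, say, $\U_{6}=\U_{7}=1$ (and their primed analogues) to first identify and then trivialize the $b_{i}$, after which $\U_{4}=\U_{8}=1$ force the $a_{i}$ to be trivial as well, leaving $\pi_{1}(X_{1})=1$.

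The main obstacle is purely computational: the vanishing cycles are very long words in $\pi_{1}(\Sigma_{3})$, as (\ref{eq12}) already shows, so one must substitute them into one another in the right order for the cancellations to appear and for no generator to survive. To control this I would (i) reduce modulo every relation already established before introducing the next vanishing cycle, and (ii) cross-check the final answer against the abelianization, since computing $H_{1}(X_{1})$ from the same relations must yield $0$ as a necessary consistency test that catches bookkeeping slips in the nonabelian manipulation. A useful conceptual check is that $\langle\langle \U_{1},\dots,\U_{10},c_{1},c_{3}\rangle\rangle$ is precisely the normal closure of the $\alpha$-image of the vanishing cycles of $W$, so it alone cuts $\pi_{1}(\Sigma_{3})$ down to a copy of $\pi_{1}(X)\cong\Z^{2}$; the entire content of the lemma is therefore that the extra relations contributed by the $\beta$-block $\U'_{1},\dots,\U'_{10},c_{5},c_{7}$ kill this residual $\Z^{2}$.
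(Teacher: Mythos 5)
Your setup is exactly the paper's: the section gives $\pi_{1}(X_{1})$ as the quotient of $\pi_{1}(\Sigma_{3})$ by the normal closure of the $\U_{i}$, $\U'_{i}$ and $c_{1},c_{3},c_{5},c_{7}$, and your closing observation that the $\alpha$-block alone only cuts the group down to $\pi_{1}(X)\cong\Z^{2}$ is correct and is a good way to see where the content lies. But the proof is deferred at precisely that point, and your prediction of how the generators die has the roles reversed. In $\pi_{1}(\Sigma_{3})$ the four curves $c_{1},c_{3},c_{5},c_{7}$ are $a_{1}$, $a_{1}a_{2}^{-1}$, $a_{2}a_{3}^{-1}$ and $a_{3}$, so they immediately kill $a_{1},a_{2},a_{3}$; it is the $b_{i}$ that require the long words. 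In the paper, $\U_{4}=1$ gives $b_{2}^{2}=b_{1}^{-1}b_{2}^{-1}b_{1}^{-1}$, $\U_{7}=1$ gives $b_{3}=b_{2}b_{1}b_{2}^{2}$ (hence $b_{3}=b_{1}^{-1}$), and $\U_{4}^{\prime}=1$ then gives $b_{1}=b_{2}^{3}$; at that stage the group is cyclic, generated by $b_{2}$, hence abelian.

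The subtler point you miss is that the argument is then finished in homology, and not merely as a consistency test: once $\pi_{1}(X_{1})$ is known to be cyclic it equals $H_{1}(X_{1};\Z)$, and the decisive relation $2b_{2}=0$ is extracted from the homology class of $\U_{6}^{\prime}=\beta(B_{0}^{\prime})$ via the action of $\beta$ on $H_{1}(\Sigma_{3};\Z)$ --- the paper never writes $\U_{6}^{\prime}$ as a word in $\pi_{1}(\Sigma_{3})$ at all. Combining $2b_{2}=0$ with $b_{1}=3b_{2}$ and the abelianization $b_{2}=-2b_{1}$ of the first relation yields $3b_{2}=0$ and hence $b_{2}=0$. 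So the abelianization is not a cross-check on a nonabelian computation; it is the step that kills the last generator. Your plan could in principle be completed entirely nonabelianly by computing $\U_{6}^{\prime}$ (or another primed cycle) as an explicit word, but as written the proposal identifies no relation that forces $b_{2}=1$, which, by your own observation about the residual $\Z^{2}$, is the whole point of the lemma.
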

\begin{proof}
The monodromy of $(X_1,f_1)$ is given in (\ref{x1}). Since this Lefschetz fibration has a section,
$\pi_{1}(X_1)$ has a presentation with generators $a_1,a_2,a_3,b_1,b_2,b_
3$ and with the 
defining relations
 \begin{eqnarray*}
  b_{3}^{-1}b_{2}^{-1}b_{1}^{-1}(a_1b_1a_{1}^{-1})(a_2b_2a_{2}^{-1})(a_3b_3a_{3}^{-1})=1, \\
  \U_i'=\U_i=c_1=c_3=c_5=c_7=1, \ \ (i=1,2, \ldots,10).
\end{eqnarray*}

Note that $c_1=a_1=1$, $c_3=a_1a_2^{-1}=1$, $c_5=a_2a_3^{-1}=1$ and  $c_7=a_3=1$ give 
\[
a_1=a_2=a_3=1
\]
 in $\pi_{1}(X_1)$.
It follows from the identity (\ref{eq12}) that the relation $\U_4=1$ gives  
\begin{eqnarray}
 b_2^{2}=b_1^{-1}b_2^{-1}b_1^{-1}. \label{rel1}
\end{eqnarray}
By the identity (\ref{eq14}), we get 
\begin{eqnarray}
 b_3=b_2b_1b_2^{2}. \label{rel2}
\end{eqnarray}
The relations (\ref{rel1}) and (\ref{rel2}) imply that $b_3=b_1^{-1}$.\\
Using the relation $\U_{4}^{\prime}=1$, the identity (\ref{eq13}) gives the following relation
\begin{eqnarray}
 b_1^{-1}b_3^{-1}b_2^{-2}b_1b_2^{-1}=1 \nonumber.
\end{eqnarray}
It follows from $b_3=b_1^{-1}$ that we have 
\begin{eqnarray}
 b_2^{-2}b_1b_2^{-1}=1\nonumber,
\end{eqnarray}
which implies that $b_1=b_2^{3}$.
Therefore $b_1$ and $b_3$ can be written in terms of  $b_2$. 
It follows that $\pi_1(X_1)$ is abelian and is isomorphic to a quotient of 
the free abelian group $\mathbb{Z}$, generated by
$b_2$. Hence, $\pi_1(X_1)$ is isomorphic to $H_1(X_1;\mathbb{Z})$.

We will denote the homology class of the curves by the same letters as denoted by the curves. 
Let us now determine the homology class of the vanishing cycle $\U_6^{\prime}=\beta(B_0^{\prime})$. 
  It follows from the homology class of $B_0^{\prime}=b_2+b_3$ that we have
  \begin{align*}
 \U_6^{\prime}=\beta(B_0^{\prime})&=\beta(b_2+b_3). 
  \end{align*}
By abelianizing above relations, we get the relations 
\begin{align*}
b_1=-b_3=3b_2
\end{align*}
 in $H_1(X_1;\mathbb{Z})$.
 Using the homology class of $\beta(b_2)$, which is given by
\begin{align*}
\beta(b_2)=2a_2-2b_2-b_3
\end{align*}
as shown in Figure ~\ref{C1}, one can obtain that
\begin{align*}
 \U_6^{\prime}&=\beta(b_2+b_3)=\beta(-2b_2)=-4a_2+4b_2+2b_3=-4a_2-2b_2
 =-2b_2=0,
  \end{align*}
since the homology classes of $a_2$ is trivial and $b_3=-3b_2$.

 It follows from the 
identity $b_1=3b_2$ that the relation $2b_2=0$ implies that 
\[
b_1=-b_3=b_2.
\]
The abelianization of (\ref{rel1}) gives $b_2=-2b_1$ implying that $3b_2=0$. Hence, $b_2$ is trivial  since $2b_2=0.$
 This completes our claim 
that $H_{1}(X_{1};\mathbb{Z})=0$.
 \end{proof}
\begin{lemma}\label{pix2}
 The $4$-manifold  $X_2$ is simply connected.
 \end{lemma}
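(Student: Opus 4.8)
The plan is to avoid repeating the explicit word and homology computations of Lemma~\ref{pix1} and instead reduce the statement directly to that lemma. First I would observe that $(X_2,f_2)$ admits a section, since its monodromy (\ref{x2}) is a positive factorization of $t_\delta^2$ in $\Mod_3^1$; thus, by the theory of Lefschetz fibrations with a section, $\pi_1(X_2)$ is isomorphic to the quotient of $\pi_1(\Sigma_3)$, generated by $a_1,a_2,a_3,b_1,b_2,b_3$, by the normal subgroup generated by the vanishing cycles of $(X_2,f_2)$, subject to the section relation (\ref{pi3}).

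The key observation is that the factorization (\ref{x2}) is obtained from (\ref{x1}) by a single lantern substitution, replacing one occurrence of $t_{c_1}t_{c_3}t_{c_5}t_{c_7}$ by $t_d t_y t_a$. Hence, viewed as a collection of simple closed curves (equivalently, of elements of $\pi_1(\Sigma_3)$), the vanishing cycles of $(X_2,f_2)$ are precisely those of $(X_1,f_1)$, namely $\U_1,\ldots,\U_{10}$, $\U'_1,\ldots,\U'_{10}$ and $c_1,c_3,c_5,c_7$, together with the three additional curves $d$, $y$ and $a$. Moreover the section relation (\ref{pi3}) is the same in both cases, as it reflects only the fiber genus and the section structure.

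It follows that the normal subgroup of $\pi_1(\Sigma_3)$ defining $\pi_1(X_1)$ is contained in the one defining $\pi_1(X_2)$, so that $\pi_1(X_2)$ is a quotient of $\pi_1(X_1)$; concretely, $\pi_1(X_2)$ is obtained from $\pi_1(X_1)$ by imposing the extra relations $d=y=a=1$. Since $\pi_1(X_1)$ is trivial by Lemma~\ref{pix1}, I conclude that $\pi_1(X_2)=1$.

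The only point requiring genuine care — and hence the main obstacle in a formal write-up — is the verification that $(X_2,f_2)$ still carries a section with the same associated relation (\ref{pi3}), so that the quotient description of $\pi_1$ applies verbatim. This follows because the lantern substitution leaves the boundary word $t_\delta^2$ untouched and therefore does not disturb the sections; granting this, the conclusion reduces to the observation that adjoining relations to the trivial group yields the trivial group.
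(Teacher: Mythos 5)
Your argument is correct and coincides with the paper's own proof: since the single lantern substitution producing (\ref{x2}) from (\ref{x1}) still leaves one Dehn twist about each of $c_1,c_3,c_5,c_7$ in the word, the presentation of $\pi_1(X_2)$ contains all the relations of $\pi_1(X_1)$ plus the extra relations $a=d=y=1$, so $\pi_1(X_2)$ is a quotient of the trivial group $\pi_1(X_1)$. The paper gives exactly this one-line reduction to Lemma~\ref{pix1}.
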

 \begin{proof}
 The monodromy of $(X_2,f_2)$ is given in (\ref{x2}). Since the Lefschetz fibration
 $(X_2,f_2)$ admits a section, $\pi_{1}(X_2)$ has a presentation with generators
 $a_1,a_2,a_3,b_1,b_2,b_3$ and with defining relations
 \[
  b_{3}^{-1}b_{2}^{-1}b_{1}^{-1}(a_1b_1a_{1}^{-1})
  (a_2b_2a_{2}^{-1})(a_3b_3a_{3}^{-1})=1, 
  \]
  \[
  U_i'=U_i=c_1=c_3=c_5=c_7=a=d=y=1, \ \ i=1,2, \ldots,10.
\]
The fundamental group $\pi_{1}(X_2)$ has the same relations as 
$\pi_{1}(X_1)$ and some additional relations. This immediately shows that $~\pi_{1}(X_2)=1$.
\end{proof}
 \begin{lemma}\label{pix3}
 The $4$-manifold  $X_3$ is simply connected.
 \end{lemma}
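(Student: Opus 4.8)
The plan is to read off a finite presentation of $\pi_1(X_3)$ from the monodromy factorization (\ref{x3}) and then to collapse it to the trivial group, arranging the argument so that, once the classes $a_1,a_2,a_3$ are killed, everything funnels into the computation already carried out for $X_1$ in Lemma~\ref{pix1}. First I would record the presentation. Since $(X_3,f_3)$ admits a section, $\pi_1(X_3)$ is the quotient of $\pi_1(\Sigma_3)=\langle a_1,a_2,a_3,b_1,b_2,b_3\rangle$ by the normal closure of the vanishing cycles of (\ref{x3}). Hence $\pi_1(X_3)$ has generators $a_1,a_2,a_3,b_1,b_2,b_3$ and defining relations: the section relation (\ref{pi3}), the relations $\U_i=\U_i'=1$ for $i=1,\dots,10$, and the lantern-interior relations $d=y=a=1$. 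The essential observation is that the relation (\ref{pi3}) and all of the relations $\U_i=\U_i'=1$ are \emph{identical} to those occurring in $\pi_1(X_1)$; the only difference between the two presentations is that the four boundary relations $c_1=c_3=c_5=c_7=1$ present for $X_1$ are here replaced by the three interior relations $d=y=a=1$.

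The strategy is therefore to recover $a_1=a_2=a_3=1$ from the new relations and then to quote Lemma~\ref{pix1} verbatim for the remaining generators. Reading the words $d$, $y$, $a$ off Figures~\ref{C1} and~\ref{GM} and substituting them into $d=y=a=1$, I would combine these with (\ref{pi3}) and a small number of the relations $\U_i=1$ to deduce $a_1=a_2=a_3=1$ in $\pi_1(X_3)$. Once this is in hand, the relations $\U_4=1$, $\U_7=1$ and $\U_4'=1$ simplify through the identities (\ref{eq12}), (\ref{eq14}) and (\ref{eq13}) exactly as in the proof of Lemma~\ref{pix1}, giving $b_3=b_1^{-1}$ and $b_1=b_2^{3}$; consequently $\pi_1(X_3)$ is cyclic, generated by $b_2$, hence abelian, so that $\pi_1(X_3)\cong H_1(X_3;\mathbb{Z})$. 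The proof then closes with the homology calculation of Lemma~\ref{pix1}: using $\U_6'=\beta(b_2+b_3)$ together with the class $\beta(b_2)=2a_2-2b_2-b_3$ from Figure~\ref{C1} yields $2b_2=0$, and combining this with $b_1=3b_2$ forces $b_2=0$. Therefore $\pi_1(X_3)$ is trivial.

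The main obstacle is precisely the first reduction, namely deriving $a_1=a_2=a_3=1$. For $X_1$ and $X_2$ this was immediate, since the boundary curves satisfy $c_1=a_1$, $c_3=a_1a_2^{-1}$, $c_5=a_2a_3^{-1}$, $c_7=a_3$, so that $c_1=c_3=c_5=c_7=1$ collapses the $a_i$ at a stroke. Here only the three interior relations $d=y=a=1$ are available, and these are geometrically unrelated to that convenient basis-like system: abelianizing them imposes only a few linear conditions on $a_1,a_2,a_3$ and does not by itself force the $a_i$ to vanish, so the argument \emph{cannot} be completed purely in homology. The residual relations among the $a_i$ must be cleared using the (non-abelian) relations $\U_i=1$, which means tracking the explicit words for $d$, $y$, $a$ against several of the $\U_i$ inside $\pi_1(\Sigma_3)$ itself. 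This bookkeeping is where the genuine content of the lemma lies; once it produces $a_1=a_2=a_3=1$, the rest of the collapse is identical to the case of $X_1$ and requires no new computation.
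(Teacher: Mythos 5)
Your outline reproduces the overall architecture of the paper's argument --- write down the presentation of $\pi_1(X_3)$ from (\ref{x3}), kill $a_1,a_2,a_3$, then note that the surviving relations contain those of $\pi_1(X_1)$ and quote Lemma~\ref{pix1} --- but the one step you yourself flag as carrying ``the genuine content of the lemma'' is exactly the step you do not carry out. From $a=a_2=1$, $d=a_1a_3^{-1}=1$ and $y=a_1a_2^{-1}a_3=1$ one obtains only $a_2=1$, $a_1=a_3$ and $a_3^{2}=1$, already at the level of the group (not merely its abelianization); the entire difficulty is to kill the residual element $a_1=a_3$ of order at most two. Your proposal replaces this with the promissory sentence that you ``would combine these with (\ref{pi3}) and a small number of the relations $\U_i=1$'' to get $a_1=a_2=a_3=1$. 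Correctly diagnosing where the work lies is not the same as doing it: as written, this is a genuine gap, and nothing in the rest of the proposal supplies the missing computation.

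For comparison, the paper closes the gap as follows: the relations $\U_6=1$ and $\U_7=1$, read through the explicit words (\ref{eq16}) and (\ref{eq14}) and simplified using $a_2=1$ and $a_1=a_3$, combine to give the identity $[b_3,a_1]=b_2a_1b_2^{-1}$; substituting this into $\U_8=1$ via (\ref{eq15}) forces $a_1=1$, hence $a_2=a_3=1$. After that, $\pi_1(X_3)$ satisfies all the relations used in the proof of Lemma~\ref{pix1}, and the remainder of that argument --- including the final homological step with $\U_6'=\beta(b_2+b_3)$ and the class of $\beta(b_2)$ --- applies verbatim, just as you describe. So the second half of your proposal is sound; the first half needs the actual manipulation of the words $\U_6$, $\U_7$, $\U_8$ in $\pi_1(\Sigma_3)$ to be a proof.
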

 
 \begin{proof}
  The monodromy of $(X_3,f_3)$ is given in (\ref{x3}). Since this Lefschetz fibration
 has a section, $\pi_{1}(X_3)$ has a presentation with generators
 $a_1,a_2,a_3,b_1$, $b_2,b_3$ and with defining relations
 \[
  b_{3}^{-1}b_{2}^{-1}b_{1}^{-1}(a_1b_1a_{1}^{-1})
  (a_2b_2a_{2}^{-1})(a_3b_3a_{3}^{-1})=1, 
  \]
  \[
  U_i'=U_i=c_1=a=d=y=1, \ \ i=1,2, \ldots,10.
\]
The relations $a=a_2=1$, $d=a_1a_3^{-1}=1$ and $y=a_1a_2^{-1}a_3=1$ imply that $a_1=a_3=a_{3}^{-1}$ in $\pi_{1}(X_3)$. One can conclude that $a_2=a_3^{2}=1$ and $a_1=a_3$.

Let us consider the relations $U_6=1$ and $U_7=1$. The identities (\ref{eq16})
 and (\ref{eq14}) turn out to be the following identity
\begin{align}
[b_3,a_1]=b_2a_1b_2^{-1}.\label{bc}
\end{align}
Combining the relation $U_8=1$ with its presentation (\ref{eq15}) and the identity (\ref{bc}), one can conclude that $a_1=1$. This also shows that $a_2=a_3=1$. Hence, $\pi_{1}(X_3)$ has the same relations as $\pi_{1}(X_1)$. The remaining part of the proof follows as in
the proof of Lemma~\ref{pix1}.
\end{proof}
 \begin{theorem}
  \label{t3}The genus-$3$ Lefschetz fibration $f_{i}:X_{i}\to \mathbb{S}^{2}$
  is minimal and has  
 \item{(i)} $e(X_{i})=21-i,$
\item{(ii)} $c_{1}^{2}(X_{i})=3+i,$
\item{(iii)} $\pi_{1}(X_{i})=1,$
for $i=1,2,3$.
\end{theorem}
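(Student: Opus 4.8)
The plan is to prove the three assertions $(i)$--$(iii)$ of Theorem~\ref{t3} in sequence, treating each invariant separately and relying heavily on the machinery already assembled. For the fundamental group statement $(iii)$, there is essentially nothing left to do: Lemmas~\ref{pix1}, \ref{pix2} and~\ref{pix3} already establish $\pi_1(X_i)=1$ for $i=1,2,3$, so I would simply cite them. The Euler characteristic $(i)$ is a direct count: each $X_i$ carries a genus-$3$ Lefschetz fibration over $\mathbb{S}^2$, so $e(X_i)=e(\mathbb{S}^2)\,e(\Sigma_3)+\#\{\text{singular fibers}\}=2(-4)+N_i=-8+N_i$, and I would read off $N_i$ from the monodromy factorizations $(\ref{x1})$, $(\ref{x2})$, $(\ref{x3})$. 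Starting from $W^\alpha W^\beta=t_\delta^2$ with $14+14=28$ singular fibers, each successive lantern substitution (replacing four Dehn twists by three) lowers the count by one: thus $X_1$ has $29$ singular fibers (the $20$ cycles $\U_i,\U_i'$ together with $c_1^2,c_3^2,c_5^2,c_7^2$), $X_2$ has $28$, and $X_3$ has $27$, giving $e(X_i)=21-i$ as claimed.

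For $(ii)$ I would use the standard relation $c_1^2(X_i)=2e(X_i)+3\sigma(X_i)$, valid for symplectic (hence almost-complex) $4$-manifolds, exactly as in the proof of Theorem~\ref{t6}. Here $e(X_i)=21-i$ is already in hand, so the task reduces to computing the signatures $\sigma(X_i)$ via the Endo--Nagami method of Theorem~\ref{t11}. Since $X_1$ is obtained from the twisted fiber sum $W^\alpha W^\beta$ whose relator is two copies of $(B_0B_1B_2C)^2(C)^{-1}$-type words, Lemma~\ref{lfib}$(ii)$ (additivity of signature under fiber sum) gives $\sigma(X_1)=2\sigma(X)=2(-6)=-12$. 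Each lantern substitution then raises the signature by $+1$, by Lemma~\ref{l1}$(ii)$, since the lantern relator $L$ has $I_3(L)=+1$: thus $\sigma(X_2)=-11$ and $\sigma(X_3)=-10$. Plugging into $c_1^2(X_i)=2(21-i)+3\sigma(X_i)$ should yield $c_1^2(X_1)=42-2-36=4$, $c_1^2(X_2)=42-4-33=5$, and $c_1^2(X_3)=42-6-30=6$, i.e. $c_1^2(X_i)=3+i$, confirming the formula.

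For minimality, I would invoke Proposition~\ref{p1}: each monodromy factorization is of the form $W^\alpha W^\beta$ (up to the lantern substitutions performed inside the positive words), which is a product of the type $W_1^\phi W_2$ with $W_1,W_2$ positive factorizations of the same element and $\phi$ a mapping class, so the associated total space is automatically minimal. The only subtlety is that after the lantern substitutions the factorizations $(\ref{x2})$ and $(\ref{x3})$ are no longer literally conjugates summed together; I would therefore need to argue that the substituted words still present as $W_1^\phi W_2$ in the sense required by Proposition~\ref{p1}, or alternatively appeal to the minimality criterion of Theorem~\ref{t1} by identifying $X_i$ as a fiber sum along a fiber and checking that no exceptional sphere survives in the complement.

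The main obstacle will be the signature bookkeeping and the precise singular-fiber counts, specifically making sure that the effect of each lantern substitution on both $e$ and $\sigma$ is tracked consistently, and that the minimality argument via Proposition~\ref{p1} genuinely applies after the substitutions rather than only to the un-substituted twisted fiber sum. Once the correspondence between the three factorizations and their relators is pinned down, the two topological invariants follow mechanically from additivity (Lemma~\ref{lfib}) and the signature values in Lemma~\ref{l1}, while $(iii)$ is immediate from the preceding three lemmas.
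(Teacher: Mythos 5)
Your overall route coincides with the paper's: part (iii) is quoted from Lemmas~\ref{pix1}--\ref{pix3}, $e(X_i)$ is read off from the length of the positive factorization, $\sigma(X_1)=2\sigma(X)=-12$ comes from Novikov additivity, each lantern substitution raises the signature by $+1$ via Lemma~\ref{l1}(ii), and $c_1^2=2e+3\sigma$. Two issues remain, one cosmetic and one substantive. The cosmetic one: your singular-fiber counts are off by one. The factorization (\ref{x1}) contains $10+10+8=28$ Dehn twists, exactly as your own parenthetical count of ``the $20$ cycles $\U_i,\U_i'$ together with $c_1^2,c_3^2,c_5^2,c_7^2$'' shows, so $N_1=28$, $N_2=27$, $N_3=26$, i.e. $N_i=29-i$ and $e(X_i)=-8+(29-i)=21-i$. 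With your stated counts $29,28,27$ you would get $e(X_1)=21$, contradicting the formula you then assert. The signature and $c_1^2$ arithmetic is correct.

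The substantive issue is the minimality of $X_2$ and $X_3$. You rightly note that Proposition~\ref{p1} does not apply after the lantern substitutions, but neither of your proposed repairs works: the substituted word genuinely fails to be of the form $W_1^{\phi}W_2$ with both factors positive factorizations of the identity (the substitution destroys exactly that structure), and the relevant application of Theorem~\ref{t1} is \emph{not} a fiber sum along the genus-$3$ fiber. The missing idea is the Endo--Gurtas interpretation of a lantern substitution as a rational blowdown along a symplectic $-4$-sphere, which can in turn be realized as the symplectic sum $X_{i+1}=X_i\#_{S}\C\P^{2}$ of $X_i$, along a $-4$-sphere $S\subset X_i$, with $\C\P^{2}$ along the conic, an embedded $+4$-sphere in the class $2[H]$. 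Theorem~\ref{t1} then transfers minimality from $X_1$ to $X_2$ and from $X_2$ to $X_3$ (the complements contain no symplectic $(-1)$-spheres since $X_1$ is minimal and $\C\P^{2}$ minus a conic contains none, and the non-minimality criterion (1)(b) is not met). Without this identification, minimality of $X_2$ and $X_3$ is left unproved in your write-up.
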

\begin{proof}
For each $i=1,2,3$, the Euler characteristic $e(X_i)$ of $X_i$, is given by
\begin{eqnarray*}
e(X_i)=e(\mathbb{S}^{2})e(\Sigma_{3})+\# \rm{singular \hspace{0.12cm} fibers}
=2(-4)+29-i=21-i
\end{eqnarray*}
and the signature $\sigma(X_i)$ of $X_i$ is 
\begin{eqnarray*}
\sigma(X_i)=\sigma(X_1)+\sigma(X_1)+i-1=-13+i
\end{eqnarray*}
using Novikov additivity and the fact that the lantern substitution increases
the signature by $1$ from Lemma~\ref{l1}(ii). The topological invariant  $c_1^{2}(X_i)$
is given by
\begin{eqnarray*}
c_{1}^{2}(X_i)=2e(X_i)+3\sigma(X_i)=3+i.
\end{eqnarray*}
By Proposition~\ref{p1}, $X_1$ is minimal. One can explain the minimality
of $X_2$ by considering a lantern substitution as a rational blowdown surgery along
a $-4$ sphere~\cite{f1}. The rational blowdown surgery along a $-4$ sphere can be
obtained by the symplectic sum operation. Hence,
$X_2=X_1\#_{S,V_{\C\P^{2}}}\C\P^{2}$ where $S$
is symplectic $-4$ sphere in $X_1$ and $V_{\C\P^{2}}$ is an embedded $+4$ sphere in $\C\P^{2}$ in the class of
$[V_{\C\P^{2}}]=2[H]\in H_2(\C\P^{2};\mathbb{Z})$.
Since $X_1$ is minimal, it follows from Theorem~\ref{t1} that $X_2$ is minimal. 
Similarly, since $X_3$ can be viewed as a symplectic sum of minimal $X_2$ and $\C\P^{2}$, $X_3$ is minimal.

Finally,  $\pi_{1}(X_i)$ is trivial for $i=1,2,3$ by Lemma~\ref{pix1},~\ref{pix2} and~\ref{pix3}.
\end{proof}


\section{Exotic fibered $4$-manifolds with $b_2^{+}=3$}\label{S4}

In this section, we construct genus-$3$ Lefschetz fibrations over $\mathbb{S}^{2}$, whose total spaces are minimal symplectic $4$-manifolds homeomorphic but not diffeomorphic to $3\C\P^{2} \#  k\overline{\C\P^{2}}$ for $k=13,\ldots ,19$. In our construction, we use the Lefschetz fibration $f:X\to \mathbb{S}^{2}$ prescribed by the factorization $W$ and the generalized  Matsumoto Lefschetz fibration for $g=3$.\par
Consider the generalized Matsumoto's Lefschetz fibration for $g=3$ so that its total space is $M_3$ with the monodromy $W_{3}=(t_{\beta_{0}}
t_{\beta_{1}}t_{\beta_{2}}t_{\beta_{3}}t_{a}^{2}t_{b}^{2})^{2}$.
Here we denote the vanishing cycles of $M_3$ by $\beta_i$'s instead of $B_i$'s
as shown in Figure~\ref{GM} to distinguish them from the vanishing cycles of the Lefschetz fibration $f:X\to \mathbb{S}^{2}$.
To determine the relations in the fundamental group of $M_3$, $\pi_{1}(M_3)$, consider the following
identification of $\pi_{1}(M_3)$ using the existence of
sections of Matsumoto's fibrations :
\begin{eqnarray*}
 \pi_{1}(M_{3})=\pi_{1}(\Sigma_{3})/\langle\beta_0,\beta_1,\beta_2,\beta_3,a,b\rangle,
 \end{eqnarray*}
 \begin{eqnarray}
 \label{eq65}
\beta_0&=&b_1b_2b_3=1,\\
\label{eq66}
\beta_1&=&b_1b_2b_3a_3a_1=1,\\
 \label{eq67}
\beta_2&=&b_2b_3a_3b_{3}^{-1}a_1=1,\\
 \label{eq68}
\beta_3&=&a_2b_2[b_3,a_3]a_2=1,\\
 \label{eq69}
a&=&a_2=1,\\
 \label{eq70}
b&=&[a_1,b_{1}^{-1}]a_{2}^{-1}=1.
\end{eqnarray}
The equations (\ref{eq65}) and (\ref{eq66}) yield  the relation $a_3a_1=1$ . Note that in $\pi_{1}(M_3)$
\[
 b_{3}^{-1}b_{2}^{-1}b_{1}^{-1}(a_1b_1a_{1}^{-1})(a_2b_2a_{2}^{-1})(a_3b_3a_{3}^{-1})=1.
\] 
This implies that $[b_3,a_3]=1$ using the relations (\ref{eq69}) and (\ref{eq70}). Hence, the relation (\ref{eq68}) yield $b_2=1$. Finally, the relations
$ a_3a_1=1$ and $b_1b_3=1$ hold in $\pi_{1}(M_3)$ implying that $\pi_{1}(M_3)$ is the free abelian group of rank $2$.\par
We first present the minimal symplectic genus-$3$ Lefschetz fibrations with
$(b_{2}^{+},b_{2}^{-})=(3,19)$ and $(3,18)$. To obtain such Lefschetz fibrations,
consider the following positive factorization:
 \begin{align}
W_3W_{3}^{\phi}=(t_{\beta_{0}}t_{\beta_{1}}t_{\beta_{2}}t_{\beta_{3}}
t_{a}^{2}t_{b}^{2})^{2}(t_{\phi(\beta_{0})}t_{\phi(\beta_{1})}t_{\phi(\beta_{2})}
t_{\phi(\beta_{3})}t_{\phi(a)}^{2}t_{\phi(b)}^{2})^{2} =t_{\delta}^{2},\label{m19}
\end{align}
where $\phi=t_{b_3}t_{\beta_0}t_{c_1}$. 

Let us rewrite the positive factorization $W_3$. In \cite{k1}, it is shown that the
product of positive Dehn twists 
$t_{\beta_{0}}t_{\beta_{1}}t_{\beta_{2}}t_{\beta_{3}}t_{a}^{2}t_{b}^{2}$
is the vertical involution $\iota$ of the genus-$3$ surface with two fixed points.
Hence, it preserves the curve $\beta_{0}$, then we have 
\[
t_{\beta_{0}}t_{\beta_{1}}t_{\beta_{2}}t_{\beta_{3}}t_{a}^{2}t_{b}^{2}(\beta_{0})=\beta_{0}.
\]
By applying Lemma \ref{le}, we get the following identity of the factorization $W_3$:
\begin{eqnarray*}
W_3&=&t_{\beta_{0}}t_{\beta_{1}}t_{\beta_{2}}t_{\beta_{3}}t_{a}^{2}t_{b}^{2}
t_{\beta_{0}}t_{\beta_{1}}t_{\beta_{2}}t_{\beta_{3}}t_{a}^{2}t_{b}^{2}\\
&=&t_{\beta_{0}}t_{\beta_{0}}t_{\beta_{1}}t_{\beta_{2}}t_{\beta_{3}}
t_{a}^{2}t_{b}^{2}t_{\beta_{1}}t_{\beta_{2}}t_{\beta_{3}}t_{a}^{2}t_{b}^{2}\\
&=&t_{\beta_{0}}^{2}(t_{\beta_{1}}t_{\beta_{2}}t_{\beta_{3}}t_{a}^{2}
t_{b}^{2})^{2}=t_{\delta}.
\end{eqnarray*}
It is easy to see that $\phi(\beta_{0})=c_1$. Then, we get  
\begin{eqnarray}
W_3W_{3}^{\phi}=T_1 t_{a}t_{b}t_{c_{1}}^{2}T_2=t_{\delta}^{2},\label{mm18}
\end{eqnarray}
where 
$T_1=t_{\beta_{0}}t_{\beta_{1}}t_{\beta_{2}}t_{\beta_{3}}t_{a}^{2}
t_{b}^{2}t_{\beta_{0}}t_{\beta_{1}}t_{\beta_{2}}t_{\beta_{3}} t_{a}t_{b}$
and
$T_2=(t_{\phi(\beta_{1})}t_{\phi(\beta_{2})}t_{\phi(\beta_{3})}
t_{\phi(a)}^{2}t_{\phi(b)}^{2})^{2}$.
Since the curves $\lbrace a,b,c_1,c_1\rbrace$ bound a sphere with
four holes, we can use the lantern relation $t_at_bt_{c_1}^{2}=t_{c_3}t_Ct_{B_2}$
to get the identity
\begin{eqnarray}
W_3W_{3}^{\phi}=T_1t_{c_3}t_Ct_{B_2} T_2=t_{\delta}^{2}.\label{m18}
\end{eqnarray}
Let $M_{19}$ and $M_{18}$ be the genus-$3$ Lefschetz fibrations with the monodromies
(\ref{m19}) and (\ref{m18}), respectively. We now prove that
$\pi_{1}(M_{19})=\pi_{1}(M_{18})=~1$.
\begin{lemma}\label{pim19}
The $4$-manifold  $M_{19}$ is simply-connected.
\end{lemma}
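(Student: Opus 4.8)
The plan is to follow exactly the same strategy that was used to compute $\pi_1(X)$, $\pi_1(X_1)$, $\pi_1(X_2)$, and $\pi_1(M_3)$ earlier in the paper: since the Lefschetz fibration $M_{19}$ has a section (its monodromy factorizes as $t_\delta^2$ in $\Mod_3^1$, a product of boundary twists), the fundamental group $\pi_1(M_{19})$ is the quotient of $\pi_1(\Sigma_3)$ on generators $a_1,a_2,a_3,b_1,b_2,b_3$ by the normal subgroup generated by the vanishing cycles, together with the surface relation
\[
b_3^{-1}b_2^{-1}b_1^{-1}(a_1b_1a_1^{-1})(a_2b_2a_2^{-1})(a_3b_3a_3^{-1})=1.
\]
The vanishing cycles of $M_{19}$ come in two families: the cycles $\beta_0,\beta_1,\beta_2,\beta_3,a,b$ of the first copy $W_3$, and their images $\phi(\beta_0),\ldots,\phi(b)$ under $\phi=t_{b_3}t_{\beta_0}t_{c_1}$ in the second copy $W_3^\phi$.

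First I would read off the relations from the first factor $W_3=1$. These are precisely the relations $(\ref{eq65})$–$(\ref{eq70})$ already recorded for $\pi_1(M_3)$, since the vanishing cycles of the first copy are literally those of Matsumoto's fibration. As computed there, these force $a_2=1$, $a_3a_1=1$, $b_1b_3=1$, $b_2=1$, and $[b_3,a_3]=1$, so that the subgroup generated by the first family already reduces $\pi_1$ to the free abelian group of rank $2$ generated by (say) $a_1$ and $b_1$, with $a_3=a_1^{-1}$ and $b_3=b_1^{-1}$. I can simply invoke that computation rather than redo it. The remaining task is to show that the extra relations coming from the \emph{second} family $\phi(\beta_i)=1$, $\phi(a)=\phi(b)=1$ kill the last two generators.

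The key step is therefore to compute the homology/word classes of $\phi(\beta_0),\ldots,\phi(b)$ in the already-abelianized quotient. Since $\phi=t_{b_3}t_{\beta_0}t_{c_1}$ and we have identified $\phi(\beta_0)=c_1$ (this is stated in the excerpt, with $c_1$ representing the class $a_1$), the relation $\phi(\beta_0)=1$ gives $a_1=1$ immediately, collapsing one generator. To eliminate $b_1$ I would expand one more of the conjugated cycles — the natural candidate is $\phi(\beta_1)$ or $\phi(b)$ — using the conjugation rule $t_{c}(x)$ and the known actions of $t_{b_3}$, $t_{\beta_0}$, $t_{c_1}$ on the generators, then abelianize. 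I expect that after substituting $a_1=a_2=a_3=1$ and $b_2=1$, $b_3=b_1^{-1}$, one of these relations reduces to a nontrivial condition on $b_1$ alone (e.g. $b_1^{k}=1$ for some $k$, or $b_1=1$ outright), which combined with the rank-$2$ abelian picture forces $b_1=1$.

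The main obstacle will be the explicit bookkeeping of $\phi$ applied to the curves: one must carry out the three successive Dehn twists $t_{c_1}$, $t_{\beta_0}$, $t_{b_3}$ on each vanishing cycle and track the resulting words in $\pi_1(\Sigma_3)$, which is error-prone. The saving grace is that I only need the \emph{abelianized} image, so I can work in $H_1$ throughout: there each twist $t_c$ acts by $x\mapsto x\pm\langle x,c\rangle\, c$ via the algebraic intersection number, turning the computation into linear algebra over $\Z^6$. I would set up $H_1(\Sigma_3;\Z)=\Z\langle a_1,a_2,a_3,b_1,b_2,b_3\rangle$ with the standard symplectic form, write each $\beta_i,a,b$ as a homology vector, apply the transvections induced by $t_{b_3},t_{\beta_0},t_{c_1}$, and impose the resulting vectors as relations. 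Once $a_1=1$ from $\phi(\beta_0)=c_1$ is in hand, checking that a single further conjugated relation forces $b_1=0$ suffices; the rest of the proof I would dispatch by the phrase ``the remaining part follows as in the proof of Lemma~\ref{pix1},'' matching the paper's own style. Hence $H_1(M_{19};\Z)=0$ and, since everything is abelian, $\pi_1(M_{19})=1$.
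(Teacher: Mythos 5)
Your proposal follows essentially the same route as the paper: use the section to present $\pi_1(M_{19})$ as a quotient of $\pi_1(\Sigma_3)$ by the vanishing cycles, observe that the first copy of $W_3$ already reduces it to the rank-$2$ abelian group from the $\pi_1(M_3)$ computation, kill $a_1$ via $\phi(\beta_0)=c_1$, and then extract one more abelianized relation from a conjugated cycle by computing the transvection action of $\phi$ on $H_1(\Sigma_3;\Z)$. The only piece you defer (``I expect that\ldots'') is exactly the computation the paper carries out: from $\phi(\beta_1)=\phi(\beta_0)+\phi(a_1)+\phi(a_3)=0$ and the formulas $\phi(a_1)=a_1-b_1-b_2-b_3$, $\phi(a_3)=a_3-b_1-b_2-2b_3$ one gets $-2b_1-3b_3=0$, which together with $b_1+b_3=0$ forces $b_3=b_1=0$.
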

\begin{proof}
Since the Lefschetz fibration $M_{19}$ has a section, $\pi_{1}(M_{19})$
has a presentation with the generators $a_j,b_j$, $(j=1,2,3)$ and with the relations 
\[
  b_{3}^{-1}b_{2}^{-1}b_{1}^{-1}(a_1b_1a_{1}^{-1})(a_2b_2a_{2}^{-1})
  (a_3b_3a_{3}^{-1})=1, \]
  \[
  \beta_i=a=b=\phi(\beta_i)=\phi(a)=\phi(b)=1, \ \ (i=0,1,2,3).
\]
Since the monodromy of the Lefschetz fibration $M_{19}$ contains the factorization $W_3$,
the relations coming from the factorization $W_3$ make $\pi_{1}(M_{19})$ a quotient
of the free abelian group of rank 2. Hence, we can find additional relations coming from the
conjugated word $W_{3}^{\phi}$ using the action of $\phi$ on the generators of first homology
group $H_1(\Sigma_{3};\mathbb{Z})$. In addition to the relations $a_2=b_2=0$,
the relations (\ref{eq65})$-$(\ref{eq70}) give the following abelianized
relations in $H_{1}(M_{19};\mathbb{Z})$:
\begin{eqnarray}
 \label{eq71} a_1+a_3&=0,\\    
  \label{eq72}b_1+b_3&=0.
 \end{eqnarray}
The effect of the diffeomorphism $\phi$
on the generators $a_1$ and $a_3$ of $H_1(\Sigma_{3};\mathbb{Z})$ are as follows :
\begin{eqnarray}
\phi(a_1)=a_1-b_1-b_2-b_3,\label{phia1}\\
\phi(a_3)=a_3-b_1-b_2-2b_3, \label{phia2}
\end{eqnarray}
where the curves  $\phi(a_1)$ and $\phi(a_3)$ are shown in Figure~\ref{phicurves}.
From the fact that $\phi(\beta_1)=0$ and $\phi(\beta_0)=c_1=a_1=0$ in $H_1(M_{19};\mathbb{Z})$ and using the identities (\ref{eq65}) and (\ref{eq66}),
we get the relation
\[
\phi(\beta_1)=\phi(\beta_{0}+a_1+a_3)=\phi(a_1+a_3)=0.
\]
Hence, by the identities (\ref{phia1}) and (\ref{phia2}), the relation
\[
\phi(a_1)+\phi(a_3)=a_1+a_3-2b_1-2b_2-3b_3=0
\]
holds in $H_1(M_{19};\mathbb{Z})$, which implies that $b_3=0$ by the relations
(\ref{eq71}) and (\ref{eq72}). Also, using $\phi(\beta_0)=a_1=0$ and the relations
(\ref{eq71}), (\ref{eq72}), we conclude that $H_1(\Sigma_{3};\mathbb{Z})=0$.
This proves that $M_{19}$ is simply connected.
\end{proof}
\begin{figure}
\begin{center}
\scalebox{0.4}{\includegraphics{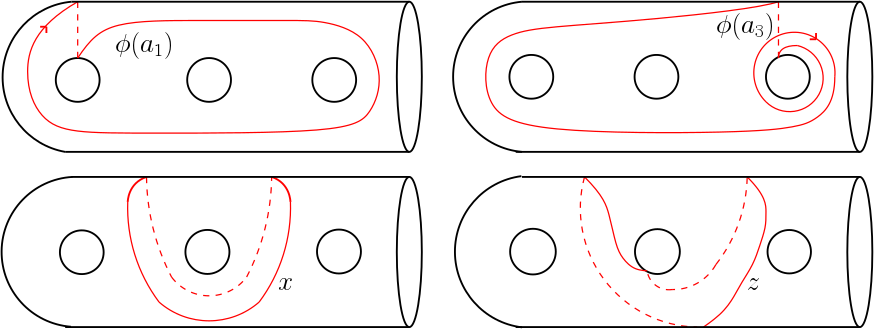}}
\caption{The curves $\phi(a_1)$, $\phi(a_3)$, $x$ and $z$ .}
\label{phicurves}
\end{center}
\end{figure}
\begin{lemma}\label{pim18}
The $4$-manifold  $M_{18}$ is simply-connected.
\end{lemma}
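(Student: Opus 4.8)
The plan is to mimic the proof of Lemma~\ref{pim19} exactly, exploiting the fact that $M_{18}$ is obtained from $M_{19}$ by a single lantern substitution. Since a lantern substitution replaces a subword of Dehn twists by another subword without changing the total space's fundamental group in an uncontrolled way, I expect $\pi_1(M_{18})$ to carry \emph{all} the defining relations of $\pi_1(M_{19})$ coming from the untouched factor $T_1$ and the conjugated factor $T_2$, together with the relations coming from the new vanishing cycles $c_3$, $C$ and $B_2$ produced by the lantern relation $t_at_bt_{c_1}^2=t_{c_3}t_Ct_{B_2}$.

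First I would record that, since the Lefschetz fibration $M_{18}$ admits a section, $\pi_1(M_{18})$ has a presentation with generators $a_j,b_j$ ($j=1,2,3$), the section relation
\[
b_{3}^{-1}b_{2}^{-1}b_{1}^{-1}(a_1b_1a_{1}^{-1})(a_2b_2a_{2}^{-1})(a_3b_3a_{3}^{-1})=1,
\]
and the relations obtained by setting each vanishing cycle in the monodromy~(\ref{m18}) equal to $1$. The vanishing cycles coming from $T_1$ are exactly $\beta_0,\beta_1,\beta_2,\beta_3,a,b$, so relations~(\ref{eq65})--(\ref{eq70}) hold verbatim; in particular $a_2=b_2=1$, $a_1a_3=1$, $b_1b_3=1$, and $[b_3,a_3]=1$, so these force $\pi_1$ to be a quotient of the rank-$2$ free abelian group generated by $a_1$ and $b_1$, just as for $M_{19}$. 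The relations from $T_2$ are the $\phi$-conjugated relations $\phi(\beta_i)=\phi(a)=\phi(b)=1$, which are the same additional relations used in Lemma~\ref{pim19}.

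The key new input is the relation $B_2=1$. I would compute the class of $B_2$ in the (already abelian) quotient: from~(\ref{eq6}) the vanishing cycle $B_2$ abelianizes to $-a_1-a_2+a_1=-a_2$, which is already trivial, so $B_2=1$ adds nothing new. More usefully, since $c_1=\phi(\beta_0)=1$ gives $a_1=0$ in homology exactly as in Lemma~\ref{pim19}, I can run the identical computation: using $\phi(a_1)=a_1-b_1-b_2-b_3$ and $\phi(a_3)=a_3-b_1-b_2-2b_3$ together with $\phi(\beta_1)=\phi(a_1+a_3)=0$, I obtain $a_1+a_3-2b_1-2b_2-3b_3=0$, which with~(\ref{eq71}) and~(\ref{eq72}) forces $b_3=0$, and then $a_1=0$ plus~(\ref{eq71})--(\ref{eq72}) kills all remaining generators. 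Hence $H_1(M_{18};\mathbb{Z})=0$, and since $\pi_1(M_{18})$ is abelian it is trivial.

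The main obstacle, and the one point requiring genuine care rather than copying, is verifying that the lantern substitution does not \emph{lose} any of the relations that drive the argument for $M_{19}$: when $t_at_bt_{c_1}^2$ is replaced by $t_{c_3}t_Ct_{B_2}$, the individual relations $a=1$, $b=1$, $c_1=1$ are no longer directly available, only $c_3=C=B_2=1$. I therefore need to check that the relations actually used in Lemma~\ref{pim19}—namely $a_2=b_2=1$, $a_1a_3=1$, $b_1b_3=1$, $[b_3,a_3]=1$, and the homological consequence $c_1=a_1=0$—still follow. The relations $a_2=b_2=1$, $a_1a_3=1$, $b_1b_3=1$, $[b_3,a_3]=1$ all come from $T_1$, which is untouched by the substitution, so they survive. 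The only relation that genuinely came from the substituted block is $c_1=a_1=0$, and this is recovered from $\phi(\beta_0)=c_1=1$ in the conjugated factor $T_2$ (which is likewise untouched). Thus every relation needed is still present, the argument goes through, and I conclude exactly as stated that the remaining part follows as in the proof of Lemma~\ref{pim19}.
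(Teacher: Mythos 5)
Your overall strategy is the paper's: reduce to the computation for $M_{19}$ after checking that the lantern substitution does not destroy the relations that drive Lemma~\ref{pim19}. You correctly isolate the one dangerous point --- the loss of the relation $c_1=\phi(\beta_0)=1$ --- but your resolution of it is wrong. In the factorization (\ref{mm18}) the conjugated word is $W_3^{\phi}=t_{\phi(\beta_0)}^{2}T_2=t_{c_1}^{2}T_2$ with $T_2=(t_{\phi(\beta_{1})}t_{\phi(\beta_{2})}t_{\phi(\beta_{3})}t_{\phi(a)}^{2}t_{\phi(b)}^{2})^{2}$; the two Dehn twists $t_{\phi(\beta_0)}=t_{c_1}$ are exactly the factor $t_{c_1}^{2}$ consumed by the lantern substitution $t_at_bt_{c_1}^{2}=t_{c_3}t_Ct_{B_2}$. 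Hence $\phi(\beta_0)$ is \emph{not} a vanishing cycle of $M_{18}$, it does not sit inside $T_2$, and the relation $c_1=1$ cannot be ``recovered from $\phi(\beta_0)=c_1=1$ in the conjugated factor $T_2$'' as you assert. As written, your derivation of $a_1=0$ has no basis.

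The gap is filled by precisely the ingredient you set aside: the new vanishing cycles produced by the substitution. The relation $c_3=a_1a_2^{-1}=1$, combined with $a=a_2=1$ (which survives in $T_1$), forces $a_1=a_2=1$ in $\pi_1(M_{18})$; this restores the missing relation, after which the presentation contains all relations of $\pi_1(M_{19})$ and the argument of Lemma~\ref{pim19} applies verbatim --- this is exactly what the paper does. Incidentally, your dismissal of $B_2$ rests on a miscomputation: by (\ref{eq6}), $B_2=a_2^{-1}[a_1,b_1^{-1}]a_1^{-1}$ abelianizes to $-a_1-a_2$, not $-a_2$, so $B_2=1$ together with $a_2=0$ would also have handed you $a_1=0$ once the group is known to be abelian.
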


\begin{proof}
The monodromy of the Lefschetz fibration  $M_{18}$ is given as (\ref{m18}).
Hence $\pi_{1}(M_{18})$ has a presentation with generators $a_j,b_j$, $(j=1,2,3)$ and with the relations 
\begin{eqnarray*}
  b_{3}^{-1}b_{2}^{-1}b_{1}^{-1}(a_1b_1a_{1}^{-1})
 (a_2b_2a_{2}^{-1})(a_3b_3a_{3}^{-1})=1, \\
  \beta_i=a=b=c_3=C=B_2=\phi(\beta_k)=\phi(a)=\phi(b)=1,
\end{eqnarray*}
for each $i=0,1,2,3$ and $k=1,2,3$.
The relations $c_3=a_1a_{2}^{-1}=1$ and $a=a_2=1$ imply that $a_1=a_2=1$ in $\pi_{1}(M_{18})$. Hence, $\pi_{1}(M_{18})$ has the same
presentation as $\pi_{1}(M_{19})$. By the proof of 
Lemma~\ref{pim19}, $\pi_{1}(M_{18})$ is trivial.
\end{proof}
 \begin{theorem}\label{exotic1819}
 The $4$-manifolds $M_{18}$ and $M_{19}$ are exotic copies of the manifolds
 $3\C\P^{2} \#  18\overline{\C\P^{2}}$ and
 $3\C\P^{2} \#  19\overline{\C\P^{2}}$, respectively.
 \end{theorem}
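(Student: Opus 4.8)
The plan is to show that $M_{18}$ and $M_{19}$ are homeomorphic to the stated manifolds via Freedman's theorem, and then to distinguish them smoothly by showing they are minimal symplectic (hence not diffeomorphic to the rational surfaces $3\C\P^{2} \# q\overline{\C\P^{2}}$, which are not minimal). First I would compute the topological invariants. The Euler characteristic is $e(M_q) = e(\mathbb{S}^2)e(\Sigma_3) + \#\{\text{singular fibers}\} = -8 + N$, where $N$ is the number of Dehn twists in the monodromy: for $M_{19}$ the factorization (\ref{m19}) is $W_3 W_3^\phi = t_\delta^2$ capped off, giving $24$ vanishing cycles, so $e(M_{19}) = 16$; the lantern substitution passing from (\ref{mm18}) to (\ref{m18}) replaces four twists by three, so $e(M_{18}) = 15$. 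For the signature I would invoke Theorem~\ref{t11} together with Lemma~\ref{l1}: each copy of $W_3 = t_{\beta_0}^2(t_{\beta_1}\cdots t_b^2)^2$ contributes $I_3\big((B_0B_1B_2B_3a^2b^2)^2\big) = -8$, so $\sigma(M_{19}) = -16$, and the single lantern substitution raises the signature by $+1$ via Lemma~\ref{l1}(ii), giving $\sigma(M_{18}) = -15$.

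From these invariants the homeomorphism type follows. With $\sigma(M_{19}) = -16$ and $e(M_{19}) = 16$ I get $b_2^+ = 3$, $b_2^- = 19$, matching $3\C\P^{2} \# 19\overline{\C\P^{2}}$; likewise $b_2^+ = 3$, $b_2^- = 18$ for $M_{18}$. By Lemmas~\ref{pim19} and~\ref{pim18} both manifolds are simply connected, and each is symplectic since it is the total space of a genus-$3$ Lefschetz fibration (Gompf). Computing $c_1^2 = 2e + 3\sigma$ shows it is odd, so the intersection form is odd and indefinite; Freedman's classification of simply connected $4$-manifolds then forces $M_{19}$ to be homeomorphic to $3\C\P^{2}\#19\overline{\C\P^{2}}$ and $M_{18}$ to $3\C\P^{2}\#18\overline{\C\P^{2}}$.

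The crux of the proof is establishing that these symplectic manifolds are \emph{not diffeomorphic} to the standard rational surfaces, which I would do by proving minimality and appealing to the fact that $3\C\P^{2} \# q\overline{\C\P^{2}}$ is non-minimal (it contains exceptional spheres). For $M_{19}$ the monodromy (\ref{m19}) has the form $W_3 W_3^\phi$, a product of a positive factorization with a $\phi$-conjugate of a positive factorization, so Proposition~\ref{p1} applies directly and gives minimality. For $M_{18}$ I would use the viewpoint that the lantern substitution realizes a symplectic rational blowdown along a $-4$ sphere, equivalently a symplectic sum $M_{18} = M_{19} \#_{S, V_{\C\P^2}} \C\P^2$ with $S$ a symplectic $-4$ sphere and $V_{\C\P^2}$ an embedded $+4$ sphere in class $2[H]$; since $M_{19}$ is minimal, Theorem~\ref{t1} yields minimality of $M_{18}$, exactly as in the proof of Theorem~\ref{t3}.

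The main obstacle I anticipate is \textbf{the minimality argument for $M_{18}$}, since one must verify carefully that the hypotheses of Theorem~\ref{t1} are met---in particular that the relevant sphere has the correct self-intersection and homology class and that no exceptional sphere survives the sum in the excluded cases (1)(a),(b) of that theorem. Once minimality is secured, a minimal symplectic $4$-manifold cannot be diffeomorphic to a non-minimal one, so $M_{18}$ and $M_{19}$ are genuinely exotic, completing the proof.
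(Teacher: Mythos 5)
Your overall strategy --- compute $e$ and $\sigma$, use Lemmas~\ref{pim19} and~\ref{pim18} for simple connectivity, invoke Freedman for the homeomorphism type, then establish minimality via Proposition~\ref{p1} for $M_{19}$ and the rational-blowdown/symplectic-sum reading of the lantern substitution together with Theorem~\ref{t1} for $M_{18}$ --- is exactly the paper's. However, there is a concrete numerical error that breaks your homeomorphism step. The word $W_3=(t_{\beta_0}t_{\beta_1}t_{\beta_2}t_{\beta_3}t_a^2t_b^2)^2$ contains $2(4+2+2)=16$ Dehn twists, so the factorization $W_3W_3^{\phi}$ has $32$ vanishing cycles, not $24$; hence $e(M_{19})=-8+32=24$ and $e(M_{18})=23$, not $16$ and $15$. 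With your values the Betti numbers you claim do not actually follow: $e=16$ and $\sigma=-16$ would force $b_2^++b_2^-=14$ and $b_2^+-b_2^-=-16$, i.e.\ $b_2^+=-1$, which is absurd. The correct values $e(M_{19})=24$, $\sigma(M_{19})=-16$ do give $(b_2^+,b_2^-)=(3,19)$, and similarly $(3,18)$ for $M_{18}$.

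A second, smaller slip: you assert that $c_1^2=2e+3\sigma$ is odd and deduce that the intersection form is odd, but with the corrected invariants $c_1^2(M_{19})=2(24)+3(-16)=0$ is even (only $c_1^2(M_{18})=1$ is odd), so that route does not work for $M_{19}$; the paper itself simply appeals to Freedman's classification at this point without arguing parity from $c_1^2$. Once the vanishing-cycle count and Euler characteristics are corrected, the remainder of your argument --- the signature via Endo--Nagami/Novikov additivity with the $+1$ contribution of the lantern relator, the minimality of $M_{19}$ from Proposition~\ref{p1} and of $M_{18}$ from Theorem~\ref{t1}, and the conclusion that a minimal symplectic $4$-manifold cannot be diffeomorphic to $3\C\P^{2}\#q\overline{\C\P^{2}}$ --- coincides with the paper's proof.
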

 \begin{proof}
  The  $4$-manifolds $M_{18}$ and $M_{19}$ have the Euler characteristics
 \begin{eqnarray*}
e(M_{19})&=&e(\mathbb{S}^{2})e(\Sigma_{3})+\# \rm{singular \hspace{0.12cm} fibers}=2(-4)+32=24,\\
e(M_{18})&=&e(\mathbb{S}^{2})e(\Sigma_{3})+\# \rm{singular \hspace{0.12cm} fibers}=2(-4)+31=23,
\end{eqnarray*}
and the signatures 
\begin{eqnarray*}
\sigma(M_{19})&=&\sigma(M_3)+\sigma(M_3)=-16,\\
\sigma(M_{18})&=&\sigma(M_{19})+1=-15,
\end{eqnarray*}
 using the Novikov additivity and the fact that lantern substitution increases
 the signature by $1$ (Lemma~\ref{l1}(ii)). 

By Lemma~\ref{pim19} and Lemma~\ref{pim18}, $M_{19}$ and $M_{18}$
are simply connected. Hence the identities
\begin{eqnarray*}
e(M_{19})&=&2-2b_1(M_{19})+b_{2}(M_{19})=2+b_{2}^{+}(M_{19})+b_{2}^{-}(M_{19})=24,\\
\sigma(M_{19})&=&b_{2}^{+}(M_{19})-b_{2}^{-}(M_{19})=-16
\end{eqnarray*}
 imply that $(b_{2}^{+}(M_{19}),b_{2}^{-}(M_{19}))=(3,19)$.
 Using Freedman's work, we conclude that $M_{19}$ and $M_{18}$
 are homeomorphic to $3\C\P^{2} \#  19\overline{\C\P^{2}}$
 and $3\C\P^{2} \#  18\overline{\C\P^{2}}$, respectively.
  By similar arguments as in the proof of Theorem~\ref{t3} $M_{18}$ and  $M_{19}$ are minimal i.e., it cannot contain
 a smoothly embedded $-1$ sphere.  However, the manifolds
 $3\C\P^{2} \#  19\overline{\C\P^{2}}$ and
 $3\C\P^{2} \#  18\overline{\C\P^{2}}$ contain smoothly
 embedded $-1$ spheres, the exceptional spheres. Hence $M_{19}$ and
 $M_{18}$ cannot be diffeomorphic to 
$3\C\P^{2} \#  19\overline{\C\P^{2}}$ and
$3\C\P^{2} \#  18\overline{\C\P^{2}}$, respectively.
 \end{proof} 
 We now present the minimal symplectic genus-$3$ Lefschetz fibrations with
$(b_{2}^{+},b_{2}^{-})=(3,17)$ and $(3,16)$. To obtain such Lefschetz
fibrations, consider the following identity of the factorization $W$ in (\ref{eq3}):
\[
W= T_3t_{C}t_{C^{\prime}}=t_{\delta},
\] 
 where $T_3= t_{t_{C}^{-1}(B_{0})}t_{t_{C}^{-1}(B_{1})}t_{t_{C}^{-1}
 (B_{2})}t_{B_0}t_{B_1}t_{B_2}t_{B_{0}^{\prime}}t_{B_{1}^{\prime}}
 t_{B_{2}^{\prime}}t_{t_{C'}(B_{0}^{\prime})}t_{t_{C'}(B_{1}^{\prime})}
 t_{t_{C}^{\prime}(B_{2}^{\prime})}$.

Also, consider factorization $W_3$ 
 \[
 W_3=T_4t_{a}^{2}=t_{\delta},
 \]
 where $T_4=t_{\beta_{0}}t_{\beta_{1}}t_{\beta_{2}}t_{\beta_{3}}
 t_{a}^{2}t_{b}^{2}t_{\beta_{0}}t_{\beta_{1}}t_{\beta_{2}}t_{\beta_{3}} t_{b}^{2}$.
 
Thus we have 
 \begin{eqnarray}
W_3W =T_4t_{a}^{2}t_Ct_{C'} T_3=t_{\delta}^{2}.\label{mm17}
\end{eqnarray}
Since the curves $\lbrace a,a,C, C^{\prime}\rbrace$ bound a sphere with
four holes, we have the identity $t_{a}^{2}t_Ct_{C^{\prime}}=t_xt_bt_z$.
Therefore, we get
\begin{eqnarray}
T_4t_xt_bt_zT_3=t_{\delta}^{2}, \label{mm16}
\end{eqnarray}
where the Dehn twist curves $x$ and $z$ are depicted in Figure~\ref{phicurves}.
  Let $M_{17}$ and $M_{16}$  be the genus-$3$ Lefschetz fibrations with the
  monodromies (\ref{mm17}) and (\ref{mm16}), respectively.
 \begin{lemma}\label{pim17}
The $4$-manifold  $M_{17}$ is simply-connected.
 \end{lemma}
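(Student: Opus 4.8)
The plan is to read a presentation of $\pi_1(M_{17})$ directly off the monodromy (\ref{mm17}) and then trivialize it by combining the two partial computations already carried out for $\pi_1(X)$ and $\pi_1(M_3)$. Since the Lefschetz fibration $M_{17}$ admits a section (its monodromy equals $t_\delta^2$ after capping off), $\pi_1(M_{17})$ is the quotient of $\pi_1(\Sigma_3)$, with generators $a_1,a_2,a_3,b_1,b_2,b_3$, by the normal subgroup generated by all vanishing cycles, together with the relation (\ref{pi3}). The factorization (\ref{mm17}) is obtained from $W_3W$ purely by Hurwitz moves and conjugation, so the vanishing cycles of $M_{17}$ are, up to conjugacy, exactly the union of those of $W_3$ (namely $\beta_0,\beta_1,\beta_2,\beta_3,a,b$) and those of $W$ (namely $B_0,B_1,B_2,C,B_0',B_1',B_2',C'$). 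Consequently every defining relation of $\pi_1(X)$, i.e. (\ref{eq4})--(\ref{eq11}), and every defining relation of $\pi_1(M_3)$, i.e. (\ref{eq65})--(\ref{eq70}), holds simultaneously in $\pi_1(M_{17})$.

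First I would import the two computations already made. From the relations coming from $W$, the discussion following (\ref{eq11}) gives the identifications $a_1=a_2^{-1}=a_3$ and $b_1=b_2^{-1}=b_3$. From the relations coming from $W_3$, the discussion following (\ref{eq70}) gives $a_2=1$ (this is relation (\ref{eq69})) and $b_2=1$ (which follows from (\ref{eq68}) once $a_2=1$ and $[b_3,a_3]=1$ are known).

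The two families now collapse against each other: $a_2=1$ forces $a_1=a_3=a_2^{-1}=1$, and $b_2=1$ forces $b_1=b_3=b_2^{-1}=1$. Hence all six generators are trivial, the section relation (\ref{pi3}) is satisfied automatically, and $\pi_1(M_{17})=1$.

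The only point that requires care, and which I regard as the main (if modest) obstacle, is the justification that rewriting $W_3W$ into the form $T_4t_a^2t_Ct_{C'}T_3$ in (\ref{mm17}), together with the internal conjugations appearing in $T_3$ and $T_4$, leaves the normal subgroup generated by the vanishing cycles unchanged, so that both families of relations are genuinely available at once. This is standard: Hurwitz moves and global conjugation do not change the total space of the fibration and hence preserve $\pi_1$, and the normal closure of a vanishing-cycle system is invariant under these operations. Granting this, the trivialization is immediate, the key structural reason being that the block $W_3$ (from $M_3$) kills the middle generators $a_2,b_2$, while the block $W$ (from $X$) ties the outer generators $a_1,a_3,b_1,b_3$ to the middle ones.
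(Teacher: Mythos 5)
Your proposal is correct and follows essentially the same route as the paper: read off the presentation of $\pi_1(M_{17})$ from the monodromy (\ref{mm17}), observe that the vanishing cycles are (up to conjugacy, which does not affect the normal closure) the union of those of $W_3$ and those of $W$, so that relations (\ref{eq65})--(\ref{eq70}) and (\ref{eq4})--(\ref{eq11}) all hold, and conclude triviality. You in fact spell out the final collapse ($a_2=b_2=1$ from the $W_3$ block meeting $a_1=a_2^{-1}=a_3$, $b_1=b_2^{-1}=b_3$ from the $W$ block) more explicitly than the paper, which simply asserts that the combined relations "immediately imply" the result.
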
 
  \begin{proof}
  The monodromy of the Lefschetz fibration  $M_{17}$ is given in (\ref{mm17}).
Hence, $\pi_{1}(M_{17})$ has a presentation with the generators $a_j,b_j$, $(j=1,2,3)$ and with the relations 
\begin{eqnarray*}
  b_{3}^{-1}b_{2}^{-1}b_{1}^{-1}(a_1b_1a_{1}^{-1})(a_2b_2a_{2}^{-1})
 (a_3b_3a_{3}^{-1})=1, \\
  \beta_i=a=b=B_k=B_{k}^{\prime}=C=C^{\prime}=1,
\end{eqnarray*}
 for each $i=0,1,2,3$ and $k=0,1,2$.
Therefore the relations (\ref{eq65})$-$(\ref{eq70}) and (\ref{eq4})$-$(\ref{eq11})
hold in $\pi_{1}(M_{17})$. 
These relations immediately imply that $\pi_1(M_{17})=1$.
\end{proof}
\begin{lemma}\label{pim16}
 The $4$-manifold  $M_{16}$ is simply-connected.
 \end{lemma}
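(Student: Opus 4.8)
The plan is to follow exactly the same strategy used in the proof of Lemma~\ref{pim17}, since the Lefschetz fibration $M_{16}$ is obtained from $M_{17}$ by a single lantern substitution $t_{a}^{2}t_{C}t_{C^{\prime}}=t_{x}t_{b}t_{z}$, which only replaces four vanishing cycles with three others while leaving the rest of the factorization $T_4\cdots T_3$ untouched. Since the Lefschetz fibration $M_{16}$ admits a section, $\pi_{1}(M_{16})$ has a presentation with generators $a_j,b_j$ $(j=1,2,3)$, the relation
\[
b_{3}^{-1}b_{2}^{-1}b_{1}^{-1}(a_1b_1a_{1}^{-1})(a_2b_2a_{2}^{-1})(a_3b_3a_{3}^{-1})=1,
\]
together with the relations $\beta_i=a=b=B_k=B_{k}^{\prime}=1$ coming from the vanishing cycles appearing in $T_3$ and $T_4$, and the three new relations $x=b=z=1$ coming from the substituted lantern triple.

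First I would observe that $T_4$ still contains the twist $t_{b}^{2}$ and the cycles $\beta_i$, so the relations (\ref{eq65})--(\ref{eq70}) coming from the factorization $W_3$ persist in $\pi_{1}(M_{16})$; likewise $T_3$ still contains all the $B_k$ and $B_{k}^{\prime}$ twists, so the relations (\ref{eq4})--(\ref{eq9}) also hold. The point is that the lantern substitution removes only the separating cycles $C,C^{\prime}$ and one copy of $a$ from the relator list, but $C=[a_1,b_1]=1$ and $C^{\prime}=[a_3,b_3]=1$ are automatically recovered: indeed the new cycles $x$ and $z$ are the curves depicted in Figure~\ref{phicurves}, and since $\lbrace a,a,C,C^{\prime}\rbrace$ bounds the same four-holed sphere as $\lbrace x,b,z\rbrace$, the homotopy classes of $x$ and $z$ are built from those of $a,C,C^{\prime}$. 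Consequently the new relations $x=z=1$, together with $a=b=1$, impose exactly the constraints that $C$ and $C^{\prime}$ are trivial in $\pi_{1}(M_{16})$.

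Therefore $\pi_{1}(M_{16})$ satisfies at least all the relations (\ref{eq65})--(\ref{eq70}) and (\ref{eq4})--(\ref{eq11}) that were shown in the proof of Lemma~\ref{pim17} to force triviality. I would then simply invoke that those relations already imply $\pi_{1}(M_{17})=1$, and conclude $\pi_{1}(M_{16})=1$ as well, since $M_{16}$ inherits every relation used there.

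The part requiring the most care is the homotopy-class computation of the substituted curves $x$ and $z$: I must verify that imposing $x=b=z=1$ genuinely recovers the triviality of $C$ and $C^{\prime}$ rather than merely their triviality in first homology. Concretely, the main obstacle is to check, using Figure~\ref{phicurves} and the lantern configuration, that $x$ and $z$ are expressible as products of (conjugates of) $a$, $C$ and $C^{\prime}$ in $\pi_{1}(\Sigma_{3})$, so that the vanishing of $x,z,a,b$ yields the vanishing of the commutators $[a_1,b_1]$ and $[a_3,b_3]$ at the level of the fundamental group. Once this identification is established, the remainder of the argument is identical to the proof of Lemma~\ref{pim17}, and the simple connectivity of $M_{16}$ follows.
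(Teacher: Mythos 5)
Your identification of the relations that survive the lantern substitution is exactly the paper's argument: since $T_4$ still contains $t_{a}^{2}$, $t_{b}^{2}$ and all the $t_{\beta_i}$, and $T_3$ still contains all the $t_{B_k}$ and $t_{B_k^{\prime}}$, the relations (\ref{eq65})--(\ref{eq70}) and (\ref{eq4})--(\ref{eq9}) hold in $\pi_{1}(M_{16})$, and the paper stops there. The detour you then take --- trying to recover $C=[a_1,b_1]=1$ and $C^{\prime}=[a_3,b_3]=1$ from $x=z=1$ --- is where your proposal is incomplete: the claim that $x$ and $z$ are products of conjugates of $a$, $C$, $C^{\prime}$ in $\pi_{1}(\Sigma_{3})$, and that their vanishing forces that of $C$ and $C^{\prime}$, is precisely the nontrivial content of that step, and you explicitly defer its verification. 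The point you are missing is that this step is unnecessary, because (\ref{eq10}) and (\ref{eq11}) are redundant given the relations you already have. Indeed, $b=[a_1,b_1^{-1}]a_2^{-1}=1$ together with $a=a_2=1$ already forces $a_1$ and $b_1$ to commute, so $C=1$ comes for free; and more directly one can kill every generator without ever mentioning $C$, $C^{\prime}$, $x$ or $z$: relations (\ref{eq4}) and (\ref{eq7}) give $b_2=b_1^{-1}$ and $b_3=b_1$, so (\ref{eq65}) reads $b_1b_2b_3=b_1=1$ and hence $b_1=b_2=b_3=1$; then (\ref{eq8}) with $a_2=1$ gives $a_3=1$, and (\ref{eq66}) gives $a_1=a_3^{-1}=1$. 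If you delete the paragraph about the four-holed sphere and either run this elimination or observe that the argument for Lemma~\ref{pim17} never actually needs (\ref{eq10})--(\ref{eq11}), your proof closes and coincides with the paper's.
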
 
  \begin{proof}
  The monodromy of the Lefschetz fibration  $(M_{16},f_{16})$ is given in (\ref{mm16}).
  Hence, $\pi_{1}(M_{16})$ has a presentation with the generators $a_j$ and
$b_j$, $(j=1,2,3)$ and with the relations 
\begin{eqnarray*}
  b_{3}^{-1}b_{2}^{-1}b_{1}^{-1}(a_1b_1a_{1}^{-1})(a_2b_2a_{2}^{-1})
  (a_3b_3a_{3}^{-1})=1, \\
  \beta_i=a=b=B_k=B_{k}^{\prime}=x=z=1,
\end{eqnarray*}
for each $i=0,1,2,3$ and $k=0,1,2$.
Thus, the relations (\ref{eq65})-(\ref{eq70}) and (\ref{eq4})-(\ref{eq9}) hold
in $\pi_{1}(M_{16})$, which give rise to $\pi_1(M_{16})=1$.
\end{proof}
 \begin{theorem}
 The $4$-manifolds $M_{17}$ and $M_{16}$ are exotic copies of the manifolds
 $3\C\P^{2} \#  17\overline{\C\P^{2}}$ and
 $3\C\P^{2} \#  16\overline{\C\P^{2}}$, respectively.
 \end{theorem}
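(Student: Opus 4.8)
The plan is to follow verbatim the template established in the proof of Theorem~\ref{exotic1819}, since $M_{17}$ and $M_{16}$ are produced by the completely analogous construction: a twisted fiber sum of $f\colon X\to\mathbb{S}^2$ (monodromy $W$) with the generalized Matsumoto fibration $M_3$ (monodromy $W_3$), followed by a single lantern substitution. First I would compute the Euler characteristics from the singular-fiber counts. The factorization (\ref{mm17}) for $M_{17}$ has the $16$ vanishing cycles of $W_3$ together with the $14$ of $W$, so
\[
e(M_{17})=e(\mathbb{S}^2)e(\Sigma_3)+30=2(-4)+30=22,
\]
while the lantern substitution yielding (\ref{mm16}) trades four twists for three, giving $e(M_{16})=21$. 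For the signatures I would use Novikov additivity across the fiber sum together with Lemma~\ref{l1}: since $\sigma(X)=-6$ by Theorem~\ref{t6} and $\sigma(M_3)=I_3\big((B_0B_1B_2B_3a^2b^2)^2\big)=-8$ by Theorem~\ref{t11} and Lemma~\ref{l1}(iii), additivity gives $\sigma(M_{17})=-14$, and since a lantern substitution raises the signature by $1$ (Lemma~\ref{l1}(ii)), $\sigma(M_{16})=-13$. The invariants $c_1^2=2e+3\sigma$ then follow.

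Next, using the simple connectivity already established in Lemmas~\ref{pim17} and~\ref{pim16}, I would read off the Betti numbers from $e=2+b_2^{+}+b_2^{-}$ and $\sigma=b_2^{+}-b_2^{-}$. For $M_{17}$ this yields $(b_2^{+},b_2^{-})=(3,17)$ and for $M_{16}$ it yields $(3,16)$. Both total spaces are simply connected with indefinite, odd intersection form, so Freedman's theorem identifies them up to homeomorphism with $3\C\P^{2}\#17\overline{\C\P^{2}}$ and $3\C\P^{2}\#16\overline{\C\P^{2}}$, respectively.

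The exoticness will come from minimality, together with the standard fact that $3\C\P^{2}\#q\overline{\C\P^{2}}$ contains smoothly embedded $(-1)$-spheres (its exceptional spheres) whereas a minimal symplectic manifold contains none. For $M_{17}$ the monodromy (\ref{mm17}) is of the shape $W_3\,W$ with both $W_3$ and $W$ positive factorizations of the identity in $\Mod_3$, so Proposition~\ref{p1} (with $\phi=\mathrm{id}$) applies directly and $M_{17}$ is minimal. For $M_{16}$ I would interpret the lantern substitution as a symplectic rational blowdown along a $-4$ sphere, realized as a symplectic sum $M_{16}=M_{17}\#_{S,V_{\C\P^{2}}}\C\P^{2}$ exactly as in the proof of Theorem~\ref{t3}; since $M_{17}$ is minimal, the non-minimality conditions of Theorem~\ref{t1}(1) fail and Theorem~\ref{t1} forces $M_{16}$ to be minimal. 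Being minimal symplectic, neither manifold can be diffeomorphic to its homeomorphic standard model, which completes the proof.

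The main obstacle I anticipate is the minimality of $M_{16}$: one must verify that the curves $\{a,a,C,C'\}$ realizing (\ref{mm16}) really bound the correct embedded symplectic $-4$ sphere and that the rational-blowdown/symplectic-sum dictionary applies so that Theorem~\ref{t1} can legitimately be invoked, just as for $X_2$ and $X_3$ in Theorem~\ref{t3}. The remaining bookkeeping (the singular-fiber counts and the value $\sigma(M_3)=-8$) is routine, but it must be carried out carefully, since any slip there would shift the identified homeomorphism type.
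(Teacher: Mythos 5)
Your proposal is correct and follows essentially the same route as the paper: Euler characteristics from the singular-fiber counts ($30$ and $29$), signatures via Novikov additivity ($\sigma(M_3)+\sigma(X)=-8-6=-14$) plus the lantern-substitution shift of $+1$, Freedman's theorem after the simple-connectivity lemmas, and minimality via Proposition~\ref{p1} for the fiber sum and the rational-blowdown/symplectic-sum interpretation with Theorem~\ref{t1} for the lantern substitution. The paper is in fact terser than you are on the minimality step (it merely refers back to the arguments for Theorem~\ref{exotic1819} and Theorem~\ref{t3}), so your spelled-out version is a faithful expansion of what the paper intends.
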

 \begin{proof}
  The manifolds $M_{17}$ and $M_{16}$ have the Euler characteristics
 \begin{eqnarray*}
e(M_{17})&=&e(\mathbb{S}^{2})e(\Sigma_{3})+\# \rm{singular \hspace{0.12cm} fibers}=2(-4)+30=22,\\
e(M_{16})&=&e(\mathbb{S}^{2})e(\Sigma_{3})+\# \rm{singular \hspace{0.12cm} fibers}=2(-4)+29=21
\end{eqnarray*}
and the signatures  
\begin{eqnarray*}
\sigma(M_{17})&=&\sigma(M_3)+\sigma(X)=-14,\\
\sigma(M_{16})&=&\sigma(M_{17})+1=-13,
\end{eqnarray*}
 using the Novikov additivity and  Lemma~\ref{l1}(ii).

It follows from Lemma~\ref{pim17} and Lemma~\ref{pim16}
that $M_{17}$ and $M_{16}$ are simply connected. Thus, the identities
\begin{eqnarray*}
e(M_{17})&=&2-2b_1(M_{17})+b_{2}(M_{17})=2+b_{2}^{+}(M_{17})+b_{2}^{-}(M_{17})
=22\\
\sigma(M_{17})&=&b_{2}^{+}(M_{17})-b_{2}^{-}(M_{17})=-14
\end{eqnarray*}
 imply that $(b_{2}^{+}(M_{17}),b_{2}^{-}(M_{17}))=(3,17)$.
Similarly, $(b_{2}^{+}(M_{16}),b_{2}^{-}(M_{16}))=(3,16)$.
Using Freedman's classification , we see that $M_{17}$ and $M_{16}$ are
homeomorphic to $3\C\P^{2} \#  17\overline{\C\P^{2}}$
and $3\C\P^{2} \#  16\overline{\C\P^{2}}$, respectively.
It is shown that the manifolds $M_{17}$ and $M_{16}$ are minimal in a similar way
in the proof of Theorem~\ref{exotic1819}. Therefore, $M_{17}$ and  $M_{16}$
cannot be diffeomorphic to $3\C\P^{2} \#  17\overline{\C\P^{2}}$
and $3\C\P^{2} \#  16\overline{\C\P^{2}}$, respectively.
  \end{proof} 
Now, let us consider the minimal genus-$3$ Lefschetz fibrations $(X_{i},f_i)$ $(i=1,2,3)$
constructed in Section~\ref{S3}.
\begin{theorem}
The $4$-manifolds $X_1$, $X_2$ and $X_3$ are exotic copies of
$3\C\P^{2} \#  15\overline{\C\P^{2}}$,
$3\C\P^{2} \#  14\overline{\C\P^{2}}$ and $3\C\P^{2} \#  13\overline{\C\P^{2}}$, respectively.
\end{theorem}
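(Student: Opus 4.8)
The plan is to observe that almost everything needed has already been assembled in Theorem~\ref{t3}, so the argument reduces to a computation of the intersection form followed by an application of Freedman's theorem and the minimality obstruction, exactly in the spirit of the proof of Theorem~\ref{exotic1819}. First I would record from Theorem~\ref{t3} that each $X_i$ is simply connected and minimal with $e(X_i)=21-i$ and $c_1^{2}(X_i)=3+i$, and then use the identity $c_1^{2}=2e+3\sigma$ to extract $\sigma(X_i)=i-13$ for $i=1,2,3$.

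Next, since $\pi_1(X_i)=1$, I would combine $e(X_i)=2+b_2^{+}(X_i)+b_2^{-}(X_i)$ with $\sigma(X_i)=b_2^{+}(X_i)-b_2^{-}(X_i)$ to solve $2b_2^{+}(X_i)=(19-i)+(i-13)=6$, giving $b_2^{+}(X_i)=3$ and $b_2^{-}(X_i)=16-i$; thus $(b_2^{+},b_2^{-})=(3,15),(3,14),(3,13)$ for $i=1,2,3$. To pin down the homeomorphism type I would check that the intersection form is odd: since $\sigma(X_i)=i-13$ is never divisible by $8$, the form cannot be even, so it must be the odd indefinite form $3\langle 1\rangle\oplus(16-i)\langle -1\rangle$. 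Freedman's classification of simply connected closed $4$-manifolds then yields that $X_i$ is homeomorphic to $3\C\P^{2}\#(16-i)\overline{\C\P^{2}}$, i.e. to $3\C\P^{2}\#15\overline{\C\P^{2}}$, $3\C\P^{2}\#14\overline{\C\P^{2}}$ and $3\C\P^{2}\#13\overline{\C\P^{2}}$ respectively.

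For the smooth statement I would invoke minimality precisely as in Theorem~\ref{exotic1819}. By Theorem~\ref{t3} each $X_i$ is a minimal symplectic $4$-manifold, and because $b_2^{+}(X_i)=3>1$ this minimality forbids any smoothly embedded $(-1)$-sphere in $X_i$. On the other hand $3\C\P^{2}\#(16-i)\overline{\C\P^{2}}$ does contain such spheres, namely the exceptional spheres of the blow-ups, so $X_i$ cannot be diffeomorphic to its standard homeomorphic model. The only genuinely substantive input is this non-diffeomorphism step, and even it is not an obstacle here, since it rests entirely on the minimality already established in Theorem~\ref{t3} (which crucially used $b_2^{+}>1$). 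Everything else is a bookkeeping computation of $e$, $\sigma$ and the parity of the intersection form, so I expect no serious difficulty beyond correctly transcribing the invariants from Theorem~\ref{t3}.
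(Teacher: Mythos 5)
Your proposal is correct and follows essentially the same route as the paper: quote minimality, simple connectivity, $e(X_i)=21-i$ and $\sigma(X_i)=-13+i$ from Theorem~\ref{t3}, solve for $(b_2^{+},b_2^{-})=(3,16-i)$, apply Freedman, and rule out diffeomorphism via minimality. Your extra check that the intersection form is odd (since $\sigma(X_i)\in\{-12,-11,-10\}$ is not divisible by $8$) is a small detail the paper leaves implicit when invoking Freedman's classification, but it does not change the argument.
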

\begin{proof}
For each $i=1,2,3$, the manifold $X_i$ is minimal, simply connected
and has the following invariants:
\begin{eqnarray*}
e(X_i)&=&2-2b_1(X_i)+b_2(X_i)=2+b_{2}^{+}(X_i)+b_{2}^{-}(X_i)=21-i\\
\sigma(X_i)&=&b_{2}^{+}(X_i)+b_{2}^{-}(X_i)=-13+i.
\end{eqnarray*}
Therefore, $(b_{2}^{+}(X_i),b_{2}^{-}(X_i))=(3,16-i)$. It follows from Freedman's
classification that $X_{i}$ is homeomorphic to
$3\C\P^{2} \#  (16-i)\overline{\C\P^{2}}$.
Since each $X_i$ is minimal, $X_i$ cannot be diffeomorphic to
$3\mathbb{CP}^{2} \#  (16-i)\overline{\C\P^{2}}$. This finishes the proof.
 \end{proof}

\section{Construction of genus-$3k$ Lefschetz fibrations and further examples of exotic $4$-manifolds}\label{S5}
In this section, we explicitly construct a positive factorization for a genus-$3k$ Lefschetz fibration over a sphere whose total space is diffeomorphic to
$\Sigma_{k}\times \mathbb{S}^{2}\# 6\overline{\C\P^{2}}$. In this construction, we use generalized Matsumoto's genus-$2k$ fibration and the similar idea as in Section~\ref{S3}. Moreover, we give some fibered and non-fibered examples of exotic structures using our generalized
construction via twisted fiber sum or Luttinger surgery.

\subsection{Construction genus-$3k$ Lefschetz fibrations from generalized Matsumoto's genus-$2k$ fibrations}\label{Xk}
We have the following two identities in $\Mod_{2k}^{2}$ using the liftings of
generalized Matsumoto's fibration for even $g$  given by Hamada~\cite{h}:
 \begin{eqnarray}
\label{eq5.1}(t_{B_{0}}t_{B_{1}}t_{B_{2}}\ldots t_{B_{2k}}t_{C})^{2}&=&(t_{C}t_{B_{0}}
t_{B_{1}}t_{B_{2}}\cdots t_{B_{2k}})^{2}=t_{C^{\prime}}\\
\label{eq5.2}(t_{B_{0}^{\prime}}t_{B_{1}^{\prime}}t_{B_{2}^{\prime}}\cdots t_{B_{2k}^{\prime}}
t_{C^{\prime}})^{2}&=&t_{C}t_{\delta},
 \end{eqnarray}
 Here the curves $B_i$, $B_{i}^{\prime},C,C^{\prime}$ are as shown
in Figure~\ref{GMAT} and $\delta$ is the curve that is parallel to the
boundary component of $\Sigma_{3k}^{1}$. The first identity in (\ref{eq5.1}) holds by the
commutativity of the Dehn twists about disjoint curves $C$ and $C^{\prime}$. The second identity in (\ref{eq5.1}) can be obtained by capping off the boundary component $\delta_1$ in Figure~\ref{GM}. We embed these curves into
$\Sigma_{3k}^{1}$ as in Figure~\ref{GMAT}, and use the commutativity of $t_{C}$ and $t_{C^{\prime}}$ to obtain the following relation in $\Mod_{3k}^{1}$.
\begin{eqnarray*}
t_{\delta}&=&t_{B_{0}}t_{B_{1}}\cdots t_{B_{2k}}t_{C}t_{B_{0}}t_{B_{1}}\cdots t_{B_{2k}}
 t_{B_{0}^{\prime}}t_{B_{1}^{\prime}}\cdots t_{B_{2k}^{^{\prime}}}
 t_{C^{\prime}}t_{B_{0}^{\prime}}t_{B_{1}^{\prime}}\cdots t_{B_{2k}^{^{\prime}}}
 t_{C^{\prime}}t_{C}t_{C}^{-1}t_{C\prime}^{-1}\\
&=&t_{B_{0}}t_{B_{1}}\cdots t_{B_{2k}}t_{C}t_{B_{0}}t_{B_{1}}\cdots t_{B_{2k}}
t_{B_{0}^{\prime}}t_{B_{1}^{\prime}}\cdots t_{B_{2k}^{^{\prime}}}t_{C^{\prime}}
t_{B_{0}^{\prime}}t_{B_{1}^{\prime}}\cdots t_{B_{2k}^{^{\prime}}}.
 \end{eqnarray*}
\begin{figure}
\begin{center}
\scalebox{0.4}{\includegraphics{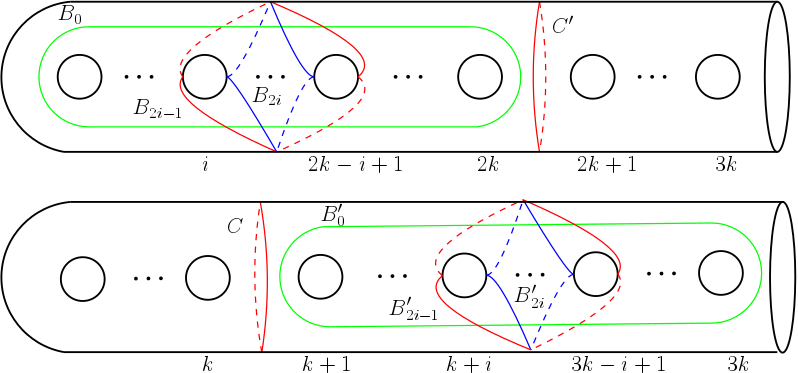}}
\caption{The curves for the monodromy $W_k$}
\label{GMAT}
\end{center}
\end{figure}
Let us denote the above factorization by $W_k$. 
Note that $W_k$ is a product consisting of positive Dehn twists about $8k+4$ nonseparating and two separating curves. Let $X(k)$
be the total space of the genus-$3k$ Lefschetz fibration over $\mathbb{S}^2$ with the monodromy factorization $W_k$. By applying
the technique of Endo and Nagami~\cite{e11} to compute the signature $\sigma(X(k))$
of $X(k)$ and the Euler characteristic formula for the Lefschetz fibrations, we get the following topological invariants of $X(k)$.
\begin{eqnarray*}
 e(X(k))&=&e(\mathbb{S}^{2})e(\Sigma_{3k})+\# \rm{singular \hspace{0.12cm} fibers}=2(2-6k)+8k+6=-4k+10,\\
 \sigma(X(k))&=&I_{3k}\big((B_{0}B_{1}\cdots B_{2k} C)^{2}(B_{0}^{\prime}
B_{1}^{\prime}\cdots B_{2k}^{\prime}C^{\prime})^{2}C^{-1}
 C^{{\prime}^{-1}}\big)  \\
  &=&I_{3k}\big((B_{0}B_{1}\cdots B_{2k} C)^{2}\big)+I_{3k}\big((B_{0}^{\prime}
 B_{1}^{\prime}\cdots B_{2k}^{\prime}C^{\prime})^{2}\big)-I_{3k}(C)-
  I_{3k}(C^{\prime})\\
  &=&-4-4-(-1)-(-1)=-6,\\
 c_{1}^{2}\big(X(k)\big)&=&3\sigma(X(k))+2e\big(X(k)\big)=-8k+2.
 \end{eqnarray*}

\begin{figure}
\begin{center}
\scalebox{0.45}{\includegraphics{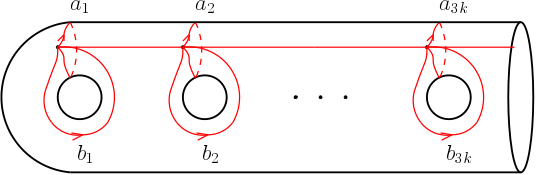}}
\caption{The generators of  $\pi_{1}(\Sigma_{3k})$}
\label{GMATFUND}
\end{center}
\end{figure}

\begin{lemma} \label{lem} 
For each $1\leq i \leq k$ and $1\leq j \leq k-1$, $\pi_{1}(X(k))$ has the following relations:
\begin{gather} 
 \label{eq76} a_{i}a_{2k-i+1}=1,\hspace{0.2cm} a_{k+i}a_{3k-i+1}=1,\\
 \label{eq77} b_{j+1}b_{j+2} \cdots b_{2k-j}=[a_{2k+1-j},b_{2k+1-j}]
[a_{2k+2-j},b_{2k+2-j}] \cdots [a_{2k},b_{2k}],\\
 \label{eq78} b_{k+j+1}b_{k+j+2} \cdots b_{3k-j}=[a_{3k+1-j},b_{3k+1-j}]
[a_{3k+2-j},b_{3k+2-j}] \cdots [a_{3k},b_{3k}].
\end{gather}
\begin{proof}
Let $a_i$ and $b_i$ be the generators of $\pi_{1}(\Sigma_{3k})$ for
$i=1, \ldots,3k$ as shown in Figure~\ref{GMATFUND}. Since the genus-$k$
Lefschetz fibration $X(k)\to \mathbb{S}^{2}$ admits a section, $\pi_{1}(X(k))$
is isomorphic to the quotient of $\pi_{1}(\Sigma_{3k})$ by the normal closure
of the vanishing cycles. Therefore, the fundamental group $\pi_{1}(X(k))$ has the following relations up to
conjugation:
\begin{eqnarray*}
B_0&=&b_1b_2\cdots b_{2k}=1,\\
B_{2i-1}&=&a_{i}b_{i}b_{i+1}\cdots b_{2k+1-i}c_{2k+1-i}
a_{2k+1-i}=1,\hspace{0.2cm} 1\leq i\leq k,\\
B_{2i}&=&a_{i}b_{i+1}b_{i+2}\cdots b_{2k-i}c_{2k-i}
a_{2k+1-i}=1,\hspace{0.2cm} 1\leq i\leq k-1,\\
B_{2k}&=&a_{k}c_{k}a_{k+1}=1,\\
B_{0}^{\prime}&=&b_{k+1}b_{k+2}\cdots b_{3k}=1,\\
B_{2i-1}^{\prime}&=&a_{k+i}b_{k+i}b_{k+i+1}\cdots b_{3k+1-i}
c_{3k+1-i}a_{3k+1-i}=1,\hspace{0.2cm} 1\leq i\leq k,\\
B_{2i}^{\prime}&=&a_{k+i}b_{k+i+1}b_{k+i+2}\cdots b_{3k-i}
c_{3k-i}a_{3k+1-i}=1,\hspace{0.2cm} 1\leq i\leq k-1,\\
B_{2k}^{\prime}&=&a_{2k}c_{2k}a_{2k+1}=1,\\
C&=&c_k=1,\\
D&=&c_{2k}=1,\\
c_{3k}&=&[a_{1},b_{1}][a_{2},b_{2}]\cdots[a_{3k},b_{3k}]=1,
\end{eqnarray*}
where $c_{j}=[a_{1},b_{1}][a_{2},b_{2}]\cdots[a_{j},b_{j}]$ for $1\leq j \leq 3k$.\par
  First consider the relations $B_{1}=a_{1}b_{1}b_{2}\cdots b_{2k}c_{2k}a_{2k}=1$, $B_0=b_1b_2\cdots b_{2k}=1$
  and $D= c_{2k}=1$. Then one can easily get $a_1a_{2k}=1$. Now consider 
$B_{2}=a_{1}b_{2}b_{3}\cdots b_{2k-1}c_{2k-1}a_{2k}=1$. Using the equation $a_1a_{2k}=1$, we get $b_{2}b_{3}\cdots b_{2k-1}c_{2k-1}=1$. This implies that $a_2a_{2k-1}=1$ using the relation $B_{3}=a_{2}b_{2}\cdots b_{2k-1}c_{2k-1}a_{2k-1}=1$.
 Inductively, one can obtain the relations $a_ia_{2k-i+1}=1$ for $1\leq i \leq k$.\par
   To get the relations $a_{k+i}a_{3k-i+1}=1$ for $1\leq i \leq k$, combine the equations 
$
  B_{1}^{\prime}=a_{k+1}b_{k+1}b_{k+2}\cdots b_{3k}c_{3k}a_{3k}=1$,
   $B_{0}^{\prime}=b_{k+1}b_{k+2}\cdots b_{3k}=1$ and $c_{3k}=1$, which implies that $a_{k+1}a_{3k}=1$. Then, using the relation $B_{2}^{\prime}=a_{k+1}b_{k+2}b_{k+3}\cdots b_{3k-1}c_{3k-1}a_{3k}=1$ together with the equation $a_{k+1}a_{3k}=1$, we obtain $b_{k+2}b_{k+3}\cdots b_{3k-1}c_{3k-1}=1$.
  Then, by inserting it into $B_{3}^{\prime}=a_{k+2}b_{k+2}b_{k+2}\cdots b_{3k}c_{3k-1}a_{3k-1}=1$, we have $a_{k+2}a_{3k-1}=1$. Continuing in this way, we conclude the desired
relation $a_{k+i}a_{3k-i+1}=1$ for $1\leq i \leq k$.\par
We next show that  $b_{i+1}b_{i+2} \cdots b_{2k-i}=[a_{2k+1-i},b_{2k+1-i}]
[a_{2k+2-i},b_{2k+2-i}] \cdots [a_{2k},b_{2k}]$ for $1\leq i \leq k-1$.
It follows from the equations $B_{2i}=a_{i}b_{i+1}b_{i+2}\cdots b_{2k-i}c_{2k-i}a_{2k+1-i}=1$ and $a_{i}a_{2k-i+1}=1$ that $b_{i+1}b_{i+2} \cdots b_{2k-i}=c_{2k-i}^{-1}$,
for $1\leq i \leq k-1$. This actually gives the required relation by the definition of $c_{2k-i}$.\par
The last equation (\ref{eq78}) comes from the relations $a_{k+i}a_{3k-i+1}=1$ and $B_{2i}^{\prime}=a_{k+i}b_{k+i+1}b_{k+i+2}\cdots b_{3k-i}c_{3k-i}a_{3k+1-i}=1$ for $1\leq i \leq k-1$ in a similar way. 
\end{proof}
\end{lemma}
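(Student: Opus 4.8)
The plan is to compute $\pi_1(X(k))$ by the standard recipe for Lefschetz fibrations admitting a section: since $X(k)\to\mathbb{S}^2$ has a $(-1)$-section coming from the lifting $W_k=t_\delta$ in $\Mod_{3k}^1$, the group $\pi_1(X(k))$ is the quotient of $\pi_1(\Sigma_{3k})$ by the normal closure of all vanishing cycles of the monodromy $W_k$. Thus the first step is to fix the generators $a_1,\ldots,a_{3k},b_1,\ldots,b_{3k}$ of $\pi_1(\Sigma_{3k})$ as in Figure~\ref{GMATFUND} and to read off, from the explicit embeddings of the curves $B_i,B_i',C,C'$ into $\Sigma_{3k}^1$, the word in these generators represented by each vanishing cycle. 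This is exactly the list displayed at the start of the proof; each $B_{2i-1},B_{2i},B_{2i-1}',B_{2i}'$ is expressed in terms of the partial products $c_j=[a_1,b_1]\cdots[a_j,b_j]$, and the global relation $c_{3k}=1$ together with $C=c_k=1$ and $D=c_{2k}=1$ records the remaining constraints.

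Granting that list of relations, the three asserted identities follow by elementary elimination, and I would organize the argument as a descending induction. For~(\ref{eq76}), I would start from the base case: combining $B_1=a_1b_1\cdots b_{2k}c_{2k}a_{2k}=1$ with $B_0=b_1\cdots b_{2k}=1$ and $D=c_{2k}=1$ collapses $B_1$ to $a_1a_{2k}=1$. Feeding $a_1a_{2k}=1$ into $B_2=a_1b_2\cdots b_{2k-1}c_{2k-1}a_{2k}=1$ yields $b_2\cdots b_{2k-1}c_{2k-1}=1$, and substituting this into $B_3=a_2b_2\cdots b_{2k-1}c_{2k-1}a_{2k-1}=1$ gives $a_2a_{2k-1}=1$. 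The inductive step is the same two-move pattern: the relation $a_ia_{2k-i+1}=1$ turns $B_{2i}$ into a product of $b$'s times a commutator word equal to $1$, which when inserted into $B_{2i+1}$ produces $a_{i+1}a_{2k-i}=1$. The second family $a_{k+i}a_{3k-i+1}=1$ is proved identically with the primed curves $B_\ell'$, using $B_0'=b_{k+1}\cdots b_{3k}=1$ and $c_{3k}=1$ as the initial data.

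For the two product identities~(\ref{eq77}) and~(\ref{eq78}), no new induction is needed: once $a_ia_{2k-i+1}=1$ is known, the relation $B_{2i}=a_ib_{i+1}\cdots b_{2k-i}c_{2k-i}a_{2k+1-i}=1$ becomes $b_{i+1}b_{i+2}\cdots b_{2k-i}=c_{2k-i}^{-1}$ for $1\le i\le k-1$. Unwinding $c_{2k-i}^{-1}=([a_1,b_1]\cdots[a_{2k-i},b_{2k-i}])^{-1}$ and using that the full product $c_{2k}$ (hence, after the earlier cancellations, the relevant telescoping block) reduces to the tail $[a_{2k+1-i},b_{2k+1-i}]\cdots[a_{2k},b_{2k}]$ gives~(\ref{eq77}); the primed version~(\ref{eq78}) follows verbatim from $B_{2i}'$ and $a_{k+i}a_{3k-i+1}=1$. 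I expect the only genuinely delicate point to be the correct reading of the vanishing-cycle words from the figure — in particular keeping track of the base point, orientations, and the exact placement of the $c_j$-factors so that the commutator tails in~(\ref{eq77}) and~(\ref{eq78}) come out as the stated half-open products rather than off by one index. The algebraic eliminations themselves are routine once those words are pinned down.
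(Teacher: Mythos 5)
Your proposal is correct and follows essentially the same route as the paper: the same list of vanishing-cycle relations, the same two-step induction alternating $B_{2i}$ and $B_{2i+1}$ (and their primed analogues) to obtain $a_ia_{2k-i+1}=1$ and $a_{k+i}a_{3k-i+1}=1$, and the same reduction of $B_{2i}$ to $b_{i+1}\cdots b_{2k-i}=c_{2k-i}^{-1}$ for the product identities. If anything, you are slightly more explicit than the paper at the last step, where turning $c_{2k-i}^{-1}$ into the commutator tail requires invoking $c_{2k}=1$ (respectively $c_{3k}=1$) and not merely the definition of $c_{2k-i}$.
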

\begin{corollary}
The first homology group $H_1(X(k);\mathbb{Z})$ of $X(k)$ is isomorphic to $\mathbb{Z}^{2k}$.
\begin{proof}
Observe that $a_{k+i}$ for all $i=1,\ldots,2k$ can be written in terms of
$a_j$ for $j=1,\ldots,k$  by the relations in (\ref{eq76}). The abelianization of the relation (\ref{eq77}) gives rise to the equation
$b_{j+1}=-b_{2k-j}$ for each $j=1,\ldots,k-1$. Furthermore, it follows from the
abelianization of the relation (\ref{eq78})  that $b_{k+j+1}=-b_{3k-j}$ for each
$j=1,\ldots,k-1$. One can easily observe that $b_{k+i}$ for all $i=1,\ldots,2k$
can be written also in terms of $b_j$ for $j=1,\ldots,k$. This finishes the proof.
\end{proof}
\end{corollary}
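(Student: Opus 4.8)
The plan is to compute the first homology group $H_1(X(k);\mathbb{Z})$ directly from the abelianized presentation obtained in Lemma~\ref{lem}, by counting independent generators that survive the relations. Since $X(k)$ admits a section, its fundamental group is the quotient of $\pi_1(\Sigma_{3k})$ by the normal closure of the vanishing cycles, so $H_1(X(k);\mathbb{Z})$ is the abelianization of this quotient, namely the free abelian group on the $6k$ symbols $a_1,\ldots,a_{3k},b_1,\ldots,b_{3k}$ modulo the abelianized vanishing-cycle relations. The strategy is to show that all but $2k$ of these generators can be eliminated, and that the surviving ones are free.

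First I would handle the $a$-generators. The relations in~(\ref{eq76}), namely $a_i a_{2k-i+1}=1$ for $1\leq i\leq k$ and $a_{k+i}a_{3k-i+1}=1$ for $1\leq i\leq k$, abelianize to $a_{2k-i+1}=-a_i$ and $a_{3k-i+1}=-a_{k+i}$. As $i$ ranges over $1,\ldots,k$, the first family expresses each of $a_{k+1},\ldots,a_{2k}$ in terms of $a_1,\ldots,a_k$, and the second family expresses each of $a_{2k+1},\ldots,a_{3k}$ in terms of $a_{k+1},\ldots,a_{2k}$, hence again in terms of $a_1,\ldots,a_k$. Thus every $a_j$ with $j>k$ is a $\mathbb{Z}$-linear combination of $a_1,\ldots,a_k$, leaving $k$ independent $a$-generators.

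Next I would handle the $b$-generators. Abelianizing~(\ref{eq77}) kills all the commutator terms on the right-hand side, giving $b_{i+1}+b_{i+2}+\cdots+b_{2k-i}=0$ for $1\leq i\leq k-1$; taking the difference of the relation for index $i$ and index $i-1$ yields $b_i=-b_{2k+1-i}$, i.e. $b_{2k+1-i}=-b_i$, so $b_{k+1},\ldots,b_{2k}$ are determined by $b_1,\ldots,b_k$. Similarly abelianizing~(\ref{eq78}) gives the telescoping relations $b_{k+j+1}=-b_{3k-j}$, expressing $b_{2k+1},\ldots,b_{3k}$ in terms of $b_{k+1},\ldots,b_{2k}$, hence ultimately in terms of $b_1,\ldots,b_k$. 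This leaves $k$ independent $b$-generators. Combined with the $k$ surviving $a$-generators, we obtain a presentation of $H_1(X(k);\mathbb{Z})$ on $2k$ generators.

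The main obstacle, and the step deserving the most care, is verifying that no further relations force any of the $2k$ surviving generators to be torsion or to coincide. I would argue that the remaining relations $B_0=0$, $B_0'=0$, $C=0$, $D=0$, and the global relation $c_{3k}=0$ all abelianize to consequences already accounted for: the relations among the $b$'s reduce to sums that vanish identically once~(\ref{eq77}) and~(\ref{eq78}) are imposed, and the commutator relation $c_{3k}=0$ abelianizes trivially. One must confirm that the reductions for the $a$'s and $b$'s are genuinely independent and impose no hidden relation mixing the two families, so that $a_1,\ldots,a_k,b_1,\ldots,b_k$ generate a free abelian group. Once this independence is established, $H_1(X(k);\mathbb{Z})\cong\mathbb{Z}^{2k}$ follows, completing the proof.
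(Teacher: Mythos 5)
Your proof is correct and takes essentially the same approach as the paper: abelianize the presentation of $\pi_1(X(k))$ coming from the vanishing cycles, use (\ref{eq76}) to eliminate $a_{k+1},\ldots,a_{3k}$ and the telescoped differences of (\ref{eq77}) and (\ref{eq78}) to eliminate the corresponding $b$'s, leaving the free abelian group on $a_1,\ldots,a_k,b_1,\ldots,b_k$. The only small inaccuracy is that (\ref{eq77}) and (\ref{eq78}) alone do not reach $b_{2k}$ and $b_{3k}$ --- the abelianized relations $B_0=0$ and $B_0^{\prime}=0$, which you correctly list among the remaining relations but describe as vanishing identically, in fact supply exactly the missing equations $b_{2k}=-b_1$ and $b_{3k}=-b_{k+1}$.
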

\begin{theorem}\label{txk}
 The $4$-manifold $X(k)$ is diffeomorphic to
 $\Sigma_{k}\times \mathbb{S}^{2}\# 6\overline{\C\P^{2}}$ for any positive integer $k$.
\end{theorem}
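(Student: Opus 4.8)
The plan is to follow the strategy of the proof of Theorem~\ref{t6} almost verbatim, since the relevant invariants have already been assembled. From the signature and Euler characteristic computations above, together with the corollary that $H_1(X(k);\mathbb{Z})\cong\mathbb{Z}^{2k}$, I would first record that $b_1(X(k))=2k$, and that $e(X(k))=10-4k$ and $\sigma(X(k))=-6$ force $b_2(X(k))=8$ and hence $(b_2^{+}(X(k)),b_2^{-}(X(k)))=(1,7)$. Since $X(k)$ is the total space of a genus-$3k$ Lefschetz fibration with $3k\geq 2$, Gompf's theorem~\cite{gs} equips it with a symplectic structure, so the machinery of symplectic surface theory applies. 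The goal is then to identify $X(k)$ as an $\mathbb{S}^{2}$-bundle over $\Sigma_{k}$ blown up six times.

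Next I would run the rational-or-ruled dichotomy. Suppose, for contradiction, that $X(k)$ is neither rational nor ruled, and let $\widetilde{X}$ be its minimal model, so that $X(k)\cong\widetilde{X}\#m\overline{\C\P^{2}}$ for some $m\geq 0$ and $c_{1}^{2}(\widetilde{X})=c_{1}^{2}(X(k))+m=-8k+2+m$. By Liu~\cite{liu2} and Taubes~\cite{t2}, a minimal symplectic $4$-manifold that is neither rational nor ruled satisfies $c_{1}^{2}\geq 0$, which forces $m\geq 8k-2$. On the other hand, $X(k)$ then contains at least $m$ disjoint exceptional spheres, while by~\cite[Theorem 4.7]{sa1} the number of such spheres in a genus-$g$ Lefschetz fibration is at most $2g-2=6k-2$. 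Since $8k-2>6k-2$ for every $k\geq 1$, this is a contradiction, and hence $X(k)$ is either rational or ruled.

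Finally I would pin down the diffeomorphism type. As $b_{1}(X(k))=2k>0$ while rational surfaces are simply connected, $X(k)$ cannot be rational, so it is ruled: it is diffeomorphic to an $\mathbb{S}^{2}$-bundle over a closed orientable surface, blown up. The value $b_{1}=2k$ forces the base to be $\Sigma_{k}$, and comparing Euler characteristics, $e(X(k))=10-4k=e(\Sigma_{k}\times\mathbb{S}^{2})+m=(4-4k)+m$, yields exactly $m=6$ blowups (consistent with $(b_2^{+},b_2^{-})=(1,7)$). The only remaining ambiguity is whether the underlying bundle is the trivial or the twisted $\mathbb{S}^{2}$-bundle over $\Sigma_{k}$; but these two bundles become diffeomorphic after a single blowup, so in either case $X(k)\cong\Sigma_{k}\times\mathbb{S}^{2}\#6\overline{\C\P^{2}}$.

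I expect the only genuinely delicate point to be the clean application of the symplectic rational/ruled classification in the positive-$b_{1}$ setting, and ensuring the exceptional-sphere bound $m\leq 2g-2$ is invoked correctly against the lower bound coming from $c_{1}^{2}(\widetilde{X})\geq 0$; once the dichotomy is resolved, the determination of the base genus, the blowup count, and the collapse of the two bundle types after blowing up are routine bookkeeping.
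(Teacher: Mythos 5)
Your argument is correct and follows the same basic template as the paper's proof of Theorem~\ref{txk} (compute $b_1=2k$ and $(b_2^+,b_2^-)=(1,7)$, run the rational-or-ruled dichotomy, then identify the ruled surface), but the two proofs diverge in how the dichotomy is resolved. The paper splits into three cases: $k=1$ is delegated to Theorem~\ref{t6}; $k=2$ is handled exactly as you do, by playing the Liu--Taubes inequality $c_1^2(\widetilde{X})\geq 0$ for a minimal non-rational, non-ruled symplectic $4$-manifold against Sato's bound of $2g-2$ on the number of disjoint exceptional spheres; and for $k>2$ the paper instead observes that $e(X(k))=10-4k<0$ and invokes Stipsicz~\cite{St.1} to conclude directly that $X(k)$ is a blow-up of a ruled surface. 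You run the Liu--Taubes/Sato contradiction uniformly for every $k$, which works because the forced lower bound $m\geq 8k-2$ exceeds the upper bound $2g-2=6k-2$ for all $k\geq 1$; this buys a single unified argument at the cost of leaning on \cite[Theorem 4.7]{sa1} for arbitrary fiber genus $3k$ rather than only $g=3$ and $g=6$, which is consistent with how the paper itself applies that theorem. Your final step is also slightly more careful than the paper's: you make explicit that the trivial and twisted $\mathbb{S}^2$-bundles over $\Sigma_k$ become diffeomorphic after a single blowup, a point the paper passes over when it deduces the diffeomorphism type from the signature and Euler characteristic alone.
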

\begin{proof}
Using $H_1(X(k);\mathbb{Z})\cong \mathbb{Z}^{2k}$ and some other topological invariants that we obtain above, one can easily compute that
$b_1(X(k))=2k$ and $b_2^{+}(X(k))=1$. When $k=1$, we showed that
the total space $X$ of the Lefschetz fibration with the factorization $W=W_1$
is diffeomorphic to $\mathbb{T}^{2}\times \mathbb{S}^{2}\# 6\overline{\C\P^{2}}$
in Theorem~\ref{t6}.\par
When $k=2$, we prove that $X(2)$ is diffeomorphic to a blow-up of a ruled surface. Assuming that $X(2)$ is not diffeomorphic to (a blow-up of) a ruled surface,
then $X(2)\cong \widetilde{X(2)}\# m \overline{\C\P^{2}}$ where
$\widetilde{X(2)}$ is the minimal model of $X(2)$ and $m$ is some non-negative integer.
It is easily computed that 
\begin{eqnarray*}
c_{1}^{2}(\widetilde{X(2)})=c_{1}^{2}(X(2))+m=-14+m.
\end{eqnarray*}
Since the minimal $4$-manifold $X(2)$ is neither rational nor ruled, we have
 \[
 c_{1}^{2}(\widetilde {X(2)})=-14+m \geq 0,
 \]
which implies that $m\geq 14$ ~\cite{t2,liu2}.
On the other hand, it follows from~\cite[Theorem 4.7]{sa1} that $m \leq 2g-2=10$,
where $g$ is the genus of the Lefschetz fibration $X(2) \to \mathbb{S}^{2}$. (Thus $g=6$.)
Since $X(2)$ is neither rational nor ruled, it admits $m$ disjoint exceptional spheres. This yields a contradiction.\par
When $k>2$, it follows from $e(X(k))<0$ that $X(k)$ is diffeomorphic to a blow-up of a
ruled surface~\cite{St.1}. Thus, using the signature and Euler characteristic of $X(k)$,
we can deduce that $X(k)$ is diffeomorphic to 
$\Sigma_{k}\times \mathbb{S}^{2}\# 6\overline{\C\P^{2}}$.
Therefore, $\pi_{1}(X(k))$, having the representation in Lemma ~\ref{lem} is isomophic to the surface group $\pi_{1}(\Sigma_{k})$. 
\end{proof}

\subsubsection{\textbf{Construction of fibered exotic $(4k^{2}-2k+1)\C\P^{2} \#  (4k^{2}+4k+7)\overline{\C\P^{2}}$, using genus-$3k$ fibration on $\Sigma_{k}\times \mathbb{S}^{2}\# 6\overline{\C\P^{2}}$}}
In order to produce exotic $4$-manifolds 
$(4k^{2}-2k+1) \C\P^{2} \#  (4k^{2}+4k+7)\overline{\C\P^{2}}$ 
which carry the genus-$3k$ Lefschetz fibration structure, we perform sufficiently many twisted
fiber sums of the genus-$3k$ Lefschetz fibration $X(k)$ to get a  simply-connected
$4$-manifold.\par

\begin{theorem}
There exist minimal symplectic exotic copies of
$(4k^{2}-2k+1) \C\P^{2} \#  (4k^{2}+4k+7)\overline{\C\P^{2}}$
admitting genus-$3k$ Lefschetz fibration structure for each integer $k\geq1$.
\end{theorem}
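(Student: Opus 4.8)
The plan is to realize the desired manifold as the total space $Z$ of a twisted fiber sum of several copies of the genus-$3k$ Lefschetz fibration $X(k)$ of Theorem~\ref{txk}, and to fix the number of summands by a signature count. By Lemma~\ref{lfib} and Novikov additivity, a twisted fiber sum of $n$ copies of $X(k)$ along the genus-$3k$ fiber $\Sigma_{3k}$ (so $e(\Sigma_{3k})=2-6k$) has
\[
\sigma = n\,\sigma(X(k)) = -6n , \qquad e = n\,e(X(k)) - 2(n-1)\,e(\Sigma_{3k}) = 8nk+6n+4-12k .
\]
Since the target $(4k^{2}-2k+1)\C\P^{2}\#(4k^{2}+4k+7)\overline{\C\P^{2}}$ has signature $-6(k+1)$ and Euler characteristic $8k^{2}+2k+10$, matching signatures forces $n=k+1$, and with this value the Euler characteristic agrees automatically. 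I would therefore take $Z$ to be the twisted fiber sum of $k+1$ copies of $X(k)$ with gluing diffeomorphisms $\phi_{1},\dots,\phi_{k}\in\Mod_{3k}$ to be chosen, so that its monodromy is the positive factorization $W_{k}\,W_{k}^{\phi_{1}}\cdots W_{k}^{\phi_{k}}=1$ in $\Mod_{3k}$.

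The heart of the argument is to choose the $\phi_{i}$ so that $\pi_{1}(Z)=1$. As the twisted fiber sum of fibrations with sections again admits a section, $\pi_{1}(Z)$ is the quotient of $\pi_{1}(\Sigma_{3k})$ by the normal closure of the vanishing cycles of all $k+1$ summands. The untwisted summand alone imposes the relations of Lemma~\ref{lem}, cutting $\pi_{1}$ down to a quotient of the genus-$k$ surface group with surviving generators $a_{1},\dots,a_{k},b_{1},\dots,b_{k}$ (cf. Figure~\ref{GMATFUND}). I would then select the diffeomorphisms $\phi_{i}$ so that their conjugated vanishing cycles yield, for each $i$, relations forcing $a_{i}=b_{i}=1$; with $k$ twisted copies available this kills all $2k$ generators and trivializes $\pi_{1}(Z)$. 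This is the main obstacle: it requires tracking the action of each $\phi_{i}$ on $\pi_{1}(\Sigma_{3k})$ precisely enough to be sure that every surviving generator dies, and it is the higher-genus analogue of the generator-by-generator computations carried out for $M_{16},\dots,M_{19}$ in Section~\ref{S4}.

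Granting $\pi_{1}(Z)=1$, the topological type is pinned down by the invariants above: $\sigma(Z)=-6(k+1)$ and $e(Z)=8k^{2}+2k+10$ give $(b_{2}^{+}(Z),b_{2}^{-}(Z))=(4k^{2}-2k+1,\,4k^{2}+4k+7)$, so by Freedman's theorem $Z$ is homeomorphic to $(4k^{2}-2k+1)\C\P^{2}\#(4k^{2}+4k+7)\overline{\C\P^{2}}$. For the smooth structure I would appeal to minimality: writing the monodromy as $W_{1}^{\mathrm{id}}W_{2}$ with $W_{1}=W_{k}$ and $W_{2}=W_{k}^{\phi_{1}}\cdots W_{k}^{\phi_{k}}$, each a positive factorization of the identity in $\Mod_{3k}$, Proposition~\ref{p1} shows that $Z$ is minimal. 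A minimal $4$-manifold contains no smoothly embedded $(-1)$-sphere, whereas the standard connected sum does, so $Z$ cannot be diffeomorphic to it and is an exotic copy. Finally, since $3k\geq 2$, Gompf's theorem equips $Z$ with a symplectic structure, which completes the proof.
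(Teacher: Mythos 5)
Your proposal follows the same route as the paper: a twisted fiber sum of $k+1$ copies of $X(k)$ (your signature count $-6n=-6(k+1)$ pinning down $n$, with the Euler characteristic then matching), triviality of $\pi_1$ forced by the conjugated copies, Freedman for the homeomorphism type, and Proposition~\ref{p1} for minimality and hence exoticness. The invariant computations agree with the paper's. The one place where you stop short is exactly the step you flag as ``the main obstacle'': you assert that diffeomorphisms $\phi_1,\dots,\phi_k$ can be chosen so that the conjugated vanishing cycles kill all $2k$ surviving generators, but you do not exhibit them, and you suggest this would require tracking the full action of each $\phi_i$ on $\pi_1(\Sigma_{3k})$. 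The paper's resolution is both concrete and simpler than that: the factorization $W_k$ contains the twists $t_{B_{2k}}$ and $t_{B_{2k}^{\prime}}$ about two \emph{disjoint} nonseparating curves, so by the change-of-coordinates principle one can choose $f_i$ with $f_i(B_{2k})=a_i$ and $f_i(B_{2k}^{\prime})=b_{i+1}$ for $i=1,\dots,k-1$, and $f_k(B_{2k})=a_k$, $f_k(B_{2k}^{\prime})=b_1$. Then $W_k^{f_i}$ literally contains $t_{a_i}$ and $t_{b_{i+1}}$ among its Dehn twist factors, so the relations $a_i=1$ and $b_{i+1}=1$ drop out of the vanishing-cycle presentation with no further computation; running over $i=1,\dots,k$ kills all of $a_1,\dots,a_k,b_1,\dots,b_k$ and hence all of $\pi_1(X(k))\cong\pi_1(\Sigma_k)$. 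With that insertion your argument is complete and coincides with the paper's proof; without it, the existence of suitable $\phi_i$ is the one unproved claim on which the whole construction rests.
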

\begin{proof}
We start with the Lefschetz fibration
$X(k)=\Sigma_{k}\times \mathbb{S}^{2}\# 6\overline{\C\P^{2}}$
with monodromy $W_k$. We can choose a diffeomorphism in such a way that
when we perform twisted fiber sum of $W_k$, the word induced by conjugating
$W_k$ with this diffeomorphism kills some generators of the fundamental group
$\pi_1(X(k))$. Considering the disjoint vanishing cycles $B_{2k}$ and $B_{2k}^{\prime}$,
one can find diffeomorphisms $f_i$ such that for each $i=1,\ldots,k-1$
\[
f_{i}(B_{2k})=a_i, f_{i}(B_{2k}^{\prime})=b_{i+1},f_{k}(B_{2k})=a_k \textrm{ and }f_{i}(B_{2k}^{\prime})=b_1,
\]
by the classification of surfaces where $a_i$, $b_i$'s are the generators of $\pi_1(X(k))$
as shown in Figure~\ref{GMATFUND} and the curves $B_{2k}$ and $B_{2k}^{\prime}$ are shown in Figure~\ref{GMAT}. We obtain the monodromy factorization
\[
W_{k}W_{k}^{f_1}\ldots W_{k}^{f_k}
\]
by conjugating the diffeomorphisms $f_i$.
Let $X(k,k+1)$ be the Lefschetz fibration with
the monodromy factorication $W_{k}W_{k}^{f_1}\cdots W_{k}^{f_k}$. Using the
theory of Lefschetz fibrations and the existence of a section, $\pi_1(X(k,k+1))$ is a quotient of $\pi_1(X(k))\cong\pi_1(\Sigma_k)$. The conjugated words
$W_{i}^{f_i}$ induce the additional relations containing 
\[
f_{i}(B_{2k})=a_i=1, f_{i}(B_{2k}^{\prime})=b_{i+1}=1 \textrm{ for each } i=1,\ldots, k-1, 
\]
\[
f_{k}(B_{2k})=a_k=1 \textrm{ and }
f_{i}(B_{2k}^{\prime})=b_1=1.
\]
 Hence, the additional relations induced by
conjugated words kill all generators $a_i,b_i$ for $i=1,\ldots,k$,
which implies that $\pi_1(X(k,k+1))$ is trivial. Using the fiber sum computations, one can also compute 
\begin{eqnarray*}
e(X(k,k+1))&=&e(\mathbb{S}^{2})e(\Sigma_{3k})+\# \rm{singular \hspace{0.12cm} fibers}=2(2-6k)+(k+1)(8k+6)=8k^{2}+2k+10,\\
\sigma(X(k,k+1))&=&(k+1)\sigma(X(k))=(k+1)(-6)=-6k-6.
\end{eqnarray*}
Freedman's classification implies that $X(k,k+1)$ is homeomorphic to
$(4k^{2}-2k+1) \C\P^{2} \#  (4k^{2}+4k+7)\overline{\C\P^{2}}$
for any integer $k>0$. Theorem~\ref{t1} (or Proposition~\ref{p1}) implies that for each $k$, 
 $X(k,k+1)$ is minimal, and therefore they are not diffeomorphic to $(4k^{2}-2k+1) \C\P^{2} \#  (4k^{2}+4k+7)\overline{\C\P^{2}}$ for any integer $k>0$.
\end{proof}
\begin{remark}
Further fibered minimal exotic examples can be produced using other genus-$3k$ Lefschetz fibrations over $\mathbb{S}^{2}$ and performing lantern substitutions. 
\end{remark}

\subsubsection{\textbf{Construction of exotic, not fibered, $(4k-1) \C\P^{2} \#  (4k+5)\overline{\C\P^{2}}$ using genus-$3k$ fibration on $\Sigma_{k}\times \mathbb{S}^{2}\# 6\overline{\C\P^{2}}$}}

To construct exotic copies of $(4k-1) \C\P^{2} \#  (4k+5)\overline{\C\P^{2}}$
for any positive integer $k$, we use the following family of symplectic building block.
It is obtained from $\Sigma_{3k}\times \mathbb{T}^{2}$ by performing a sequence
of torus surgeries. Our computations are similar to the some computations in~\cite{as}. We perform the following $6k$-torus surgeries on
$\Sigma_{3k}\times \mathbb{T}^{2}$ for fixed integers $m,k\geq 1$ and $p,q\geq 0$:
\begin{align*}
&(\beta_{1}^{\prime}\times c^{\prime\prime},\beta_{2k},-1), 
(\alpha_{3k}^{\prime \prime}\times d^{\prime},d^{\prime}, m/q),\\
&(\beta_{2}^{\prime}\times c^{\prime\prime},\beta_{2k+1},-1), 
(\alpha_{1}^{\prime}\times c^{\prime},\alpha_{1}^{\prime},-1),\\
&(\beta_{3}^{\prime}\times c^{\prime\prime},\beta_{2k+2},-1), 
(\alpha_{2}^{\prime}\times c^{\prime},\alpha_{2}^{\prime},-1),\\
&\ldots , \ldots \\
&(\beta_{k}^{\prime}\times c^{\prime\prime},\beta_{3k-1},-1), 
(\alpha_{k-1}^{\prime}\times c^{\prime},\alpha_{k-1}^{\prime},-1), \\
&(\beta_{k+1}^{\prime}\times c^{\prime\prime},\beta_{1},-1), 
(\alpha_{k}^{\prime}\times c^{\prime},\alpha_{k}^{\prime},-1),\\
&(\beta_{k+2}^{\prime}\times c^{\prime\prime},\beta_{2},-1), 
(\alpha_{k+1}^{\prime}\times c^{\prime},\alpha_{k+1}^{\prime},-1),\\
&\ldots , \ldots \\
&(\beta_{2k}^{\prime}\times c^{\prime\prime},\beta_{k},-1),
 (\alpha_{2k-1}^{\prime}\times c^{\prime},\alpha_{2k-1}^{\prime},-1),\\
&(\beta_{2k+1}^{\prime}\times c^{\prime\prime},\beta_{k+1},-1),
 (\alpha_{2k}^{\prime}\times c^{\prime},\alpha_{2k}^{\prime},-1),\\
&(\beta_{2k+2}^{\prime}\times c^{\prime\prime},\beta_{k+2},-1), 
(\alpha_{2k+1}^{\prime}\times c^{\prime},\alpha_{2k+1}^{\prime},-1),\\
&\ldots , \ldots \\
&(\beta_{3k-1}^{\prime}\times c^{\prime\prime},\beta_{2k-1},-1), 
(\alpha_{3k-2}^{\prime}\times c^{\prime},\alpha_{3k-2}^{\prime},-1),\\
&(\alpha_{3k}^{\prime}\times c^{\prime},c^{\prime},1/p), 
(\alpha_{3k-1}^{\prime}\times c^{\prime},\alpha_{3k-1}^{\prime},-1),
\end{align*}
where $\alpha_i,\beta_i$ are the generators of
$\pi_{1}(\Sigma_{3k})$ for $i=1,2\ldots 3k$ and $c,d$ are the generators of $\pi_{1}(\mathbb{T}^{2})$. Let $Y_k(1/p , m/q)$ denote the resulting smooth $4$-manifold. When $m=1$, the above torus surgeries are Luttinger surgeries
and in this case the Luttinger surgery preserves the minimality. Moreover, it can be performed symplectically. \par
\begin{figure}[h]
\begin{center}
\scalebox{0.4}{\includegraphics{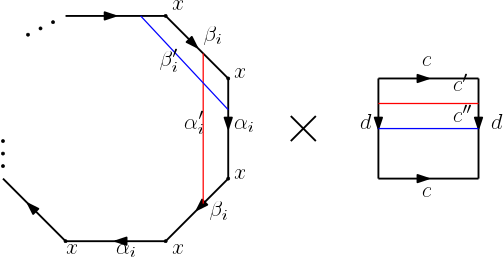}}
\caption{Lagrangian tori $\beta_{i}^{\prime}\times c^{\prime \prime}$ and
$\alpha_{i}^{\prime}\times c^{\prime}$ }
\label{lagrangian}
\end{center}
\end{figure}
The fundamental group of the manifold $Y_k(1/p , m/q)$ is
generated by $\alpha_i,\beta_i,c$ and $d$ for $i=1,2\ldots 3k$ and it has the following relations:
\[ [\alpha_{1}^{-1},d]=\beta_{2k}, [\alpha_{2}^{-1},d]=
\beta_{2k+1},\cdots ,[\alpha_{k}^{-1},d]=\beta_{3k-1},\]
\[ [\alpha_{k+1}^{-1},d]=\beta_{1},[\alpha_{k+2}^{-1},d]=
\beta_{2},\cdots ,[\alpha_{3k-1}^{-1},d]=\beta_{2k-1},\]
\[ [c^{-1},\beta_{3k}]^{-m}=d^{q}, [\beta_{1}^{-1},d^{-1}] =
a_1, [\beta_{2}^{-1},d^{-1}] =\alpha_2, \]

\[ \cdots, \cdots\]

\[ [\beta_{3k-1}^{-1},d^{-1}] =\alpha_{3k-1}, [d^{-1},\beta_{3k}^{-1}] 
=c^{p}, [\beta_j,c]=1, \]
\[ [\alpha_{3k},d]=1, [\alpha_j,c]=1, [\alpha_{3k},c]=1,\] 
\[ [\alpha_{1},\beta_{1}][\alpha_{1},\beta_{1}] \cdots [\alpha_{3k},\beta_{3k}]
=1, [c,d]=1. \addtag \label{yrel}  \]
where $1\leq j\leq 3k-1$.

\begin{theorem}
There exist smooth exotic copies of
 $(4k-1) \C\P^{2} \#  (4k+5)\overline{\C\P^{2}}$.
\end{theorem}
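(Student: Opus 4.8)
The plan is to realize the desired manifolds as symplectic fiber sums of the two pieces already constructed. Fixing $m=1$, all of the torus surgeries producing $Y_k:=Y_k(1/p,1/q)$ are Luttinger surgeries, so $Y_k$ is a \emph{minimal} symplectic $4$-manifold in which the surface $\Sigma:=\Sigma_{3k}\times\{pt\}$ survives as an embedded symplectic genus-$3k$ surface of square zero. On the other side, the genus-$3k$ Lefschetz fibration $X(k)\cong\Sigma_{k}\times\mathbb{S}^{2}\#6\overline{\C\P^{2}}$ of Theorem~\ref{txk} has a regular fiber $F\cong\Sigma_{3k}$, which is symplectic of square zero. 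I would set
\[
Z_k \;=\; X(k)\,\#_{\Sigma_{3k}}\,Y_k ,
\]
the Gompf symplectic sum gluing $F$ to $\Sigma$ by an orientation-reversing, fiber-preserving diffeomorphism to be fixed during the $\pi_1$-computation; $Z_k$ is then symplectic.

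I would first record the characteristic numbers. Since a torus surgery changes neither quantity, $e(Y_k)=\sigma(Y_k)=0$, and Lemma~\ref{lfib} together with $e(X(k))=-4k+10$, $\sigma(X(k))=-6$ and $e(\Sigma_{3k})=2-6k$ gives
\[
e(Z_k)=8k+6,\qquad \sigma(Z_k)=-6 .
\]
Because $\sigma(Z_k)=-6\not\equiv 0\pmod{16}$, a simply connected $Z_k$ cannot be spin by Rokhlin's theorem, so its intersection form is odd; granting $\pi_1(Z_k)=1$ this forces $(b_2^+,b_2^-)=(4k-1,4k+5)$, and Freedman's classification then yields a homeomorphism $Z_k\approx(4k-1)\C\P^{2}\#(4k+5)\overline{\C\P^{2}}$.

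The crux, and the step I expect to be the main obstacle, is proving $\pi_1(Z_k)=1$. I would apply van Kampen's theorem to $Z_k=(X(k)\setminus\nu F)\cup_{S^1\times\Sigma_{3k}}(Y_k\setminus\nu\Sigma)$. Because $X(k)$ carries a section, the meridian of $F$ is nullhomotopic in $X(k)\setminus\nu F$, whose fundamental group is therefore generated by the $a_i,b_i$ subject to the vanishing-cycle relations of Lemma~\ref{lem} (in particular $a_ia_{2k-i+1}=1$ and the surface-group relations~(\ref{eq77}),(\ref{eq78})). The group $\pi_1(Y_k\setminus\nu\Sigma)$ contributes the surgery relations~(\ref{yrel}), notably the commutators $[\alpha_i^{-1},d]=\beta_\bullet$ and $[\beta_i^{-1},d^{-1}]=\alpha_\bullet$. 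The gluing diffeomorphism must identify the two copies of $\pi_1(\Sigma_{3k})$ so that these two families of relations are compatible; the delicate bookkeeping is to verify that the amalgamated quotient collapses, i.e. that the commutator relations of~(\ref{yrel}), fed into the relations of Lemma~\ref{lem} and into the vanishing of the meridian, successively kill every $a_i$, $b_i$, $c$ and $d$.

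Finally I would establish exoticness. For minimality I would invoke Usher's Theorem~\ref{t1} for $X(k)\#_{\Sigma_{3k}}Y_k$: the complement $X(k)\setminus\nu F$ contains no symplectic $(-1)$-sphere, since relative minimality rules out sphere components inside fibers and the exceptional classes of $X(k)$ all meet the generic fiber $F$, while $Y_k$ is minimal; after excluding the special cases (1)(b) and (2) of the theorem, $Z_k$ falls into the minimal case. Being a minimal symplectic $4$-manifold with $b_2^+=4k-1>1$, $Z_k$ has nonvanishing Seiberg--Witten invariant by Taubes' theorem, whereas $(4k-1)\C\P^{2}\#(4k+5)\overline{\C\P^{2}}$ is a connected sum with both summands of positive $b_2^+$ and hence has vanishing Seiberg--Witten invariants; thus $Z_k$ is not diffeomorphic to it. To promote ``there exist'' to infinitely many, I would let the surgery coefficient $1/p$ range over $p\geq1$ and distinguish the resulting homeomorphic $4$-manifolds by the prescribed change of their Seiberg--Witten invariants under the torus-surgery formula.
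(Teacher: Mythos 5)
Your overall strategy is exactly the paper's: form the symplectic fiber sum $Z(k)=X(k)\#_{\Sigma_{3k}}Y_k(1,1)$ of the genus-$3k$ fibration on $\Sigma_k\times\mathbb{S}^2\#6\overline{\C\P^2}$ with the Luttinger-surgered $\Sigma_{3k}\times\mathbb{T}^2$, compute $e=8k+6$ and $\sigma=-6$, apply Freedman, and distinguish smooth structures by Taubes' nonvanishing of the Seiberg--Witten invariant of the canonical class versus the vanishing for a connected sum with both $b_2^+>0$; your Rokhlin argument for oddness of the form is a nice touch that the paper leaves implicit, and your final remark about generating infinitely many examples (the paper varies $m$ rather than $p$) is consistent. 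Two small points: Taubes' theorem needs only $b_2^+>1$, so your detour through Usher's minimality criterion is unnecessary for the exoticness conclusion (and your claim that every exceptional sphere of $X(k)$ meets the fiber is asserted rather than proved); and the theorem as stated does not require minimality.

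The genuine gap is that the heart of the proof --- $\pi_1(Z(k))=1$ --- is only announced, not carried out. You defer both the choice of gluing diffeomorphism (``to be fixed during the $\pi_1$-computation'') and the verification that the amalgamated group collapses (``delicate bookkeeping''). This is precisely where all the work in the paper lies: one must fix the gluing $a_i\mapsto\alpha_i$, $b_i\mapsto\beta_i$, $\lambda\mapsto\lambda'$, observe that the section of $X(k)$ kills the meridian, and then run a specific chain of deductions combining the surgery relations (\ref{yrel}) with the vanishing-cycle relations of Lemma~\ref{lem}: the relations $[a_{3k},d]=1$ and $[a_{k+1}^{-1},d]=b_1$ together with $a_{3k}=a_{k+1}^{-1}$ give $b_1=1$, whence $a_1=1$, then $a_{2k}=a_{2k+1}=1$, then $b_k=b_{k+1}=1$, and so on until $b_{3k}=1$ forces $c=d=1$ and the remaining commutator relations kill every generator. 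Whether the quotient collapses depends delicately on which generators of $\pi_1(\Sigma_{3k})$ are already trivial in $\pi_1(X(k))$ (namely $a_i,b_i$ for $k+1\le i\le 3k$, per Lemma~\ref{lem}) and on how the surgery curves were indexed; with a different gluing the group need not be trivial. As written, the proposal identifies the right obstacle but does not overcome it, so the proof is incomplete at its central step.
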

\begin{proof}
Consider the symplectic manifold $X(k)$ with a genus-$3k$ symplectic
submanifold $\Sigma_{3k}$, a regular fiber coming from its genus-$3k$
Lefschetz fibration structure and the symplectic $4$-manifold $Y_k(1,1)$
 with the symplectic submanifold $\Sigma_{3k}^{\prime}$ that is a copy of
$\Sigma_{3k}\times\lbrace pt \rbrace$ in $Y_k(1,1)$. (Here $Y_k(1,1)$ is obtained by performing torus surgeries from
$\Sigma_{3k}\times \mathbb{T}^{2}$ where $p=q=m=1$ as explained above). Let $Z(k)$ be the
$4$-manifold obtained by symplectic fiber sum of $X(k)$ and $Y_k(1,1)$
along the surfaces $\Sigma_{3k}$  and $\Sigma_{3k}^{\prime}$. We need
to find an orientation-reversing gluing diffeomorphism to perform symplectic
fiber sum such that  $Z(k)$ is simply-connected. \par
Recall from Lemma~\ref{lem} that $a_i,b_i$ ($i=1,\ldots,3k$) are the generators
of the fundamental group $\pi_{1}(X(k))\cong \pi_{1}(\Sigma_{k})$ but the generators
$a_i,b_i$ ($k+1\leq i \leq3k$) are nullhomotopic. The genus-${3k}$ Lefschetz fibration $X(k)\to \mathbb{S}^{2}$ admits a section, so the normal circle to
$\Sigma_{3k}$, denote it by $\lambda$,
 is nullhomotopic in $\pi_{1}(X(k)\setminus \upsilon\Sigma_{3k})$. We thus get $\pi_{1}(X(k)\setminus \upsilon\Sigma_{3k})$ is isomorphic to $\pi_{1}(X(k))$.\par
The fundamental group $\pi_{1}(Y_{k}(1,1))$ has the generators $\alpha_i, \beta_i,c$ and $d$ for $i=1,\ldots,3k$ with the relations in (\ref{yrel}). Choose a base point $p$ of
$\pi_{1}(Y_{k}(1,1))$  on 
$\partial \upsilon \Sigma_{3k}^{\prime}\cong\Sigma_{3k}^{\prime}\times S^{1}$
in such a way that $\pi_{1}(Y_{k}(1,1)\setminus\Sigma_{3k}^{\prime},p)$ is
normally generated by $\alpha_i,\beta_i,c$ and $d$ for $i=1,\ldots,3k$. 
One can perform above tori surgeries such that $\Sigma_{3k}^{\prime}\subset Y_{k}(1,1)$
is disjoint from all tori surgeries performed. Hence the relations in (\ref{yrel}) still hold in
$\pi_{1}(Y_{k}(1,1)\setminus\Sigma_{3k}^{\prime},p)$ except for $[c,d]=1$, 
which represents a meridian in $\pi_{1}(Y_{k}(1,1)\setminus\Sigma_{3k}^{\prime},p)$, denote it by 
$\lambda^{\prime}$.\par
 Now, choose the gluing diffeomorphism 
 $\varphi: \partial (\Sigma_{3k}) \to \partial (\Sigma_{3k}^{\prime})$ which maps the generators  of $\pi_{1}$ as follows:
 \[ 
 a_i\mapsto \alpha_{i}, b_i\mapsto \beta_{i}, \textrm{ and }\lambda\mapsto \lambda^{\prime}.
 \]
By Van Kampen's theorem, the fundamental group of the resulting $4$-manifold
 $Z(k)=(X(k)\setminus\upsilon\Sigma_{3k}\cup_{\varphi}Y_{k}(1,1)\setminus\upsilon\Sigma_{3k}^{\prime})$
satisfies
 \[
 \pi_{1}(Z(k))\cong\dfrac{\pi_{1}(X(k)\setminus \upsilon
 \Sigma_{3k}\ast \pi_{1}(Y_{k}(1,1)\setminus\Sigma_{3k}^{\prime})}
 {\langle a_i=\alpha_i, b_i=\beta_i, \lambda=\lambda^{\prime}\rangle} .
 \]

One can conclude that $\pi_{1}(Z(k))$ admits a presentation with generators
$a_i, b_i,c$ and $d$ ($i=1,\ldots,3k$)  and with the relations
(\ref{eq76}), (\ref{eq77}), (\ref{eq78}) and  (\ref{yrel}). Keep in mind $a_i=\alpha_i$, $b_i=\beta_i$, $\lambda=\lambda^{\prime}$
and $[c,d]=\lambda^{\prime}$. It suffices to prove that $c=d=1$ in 
$\pi_{1}(Z(k))$ to show that $\pi_{1}(X(k))$ is trivial using the relations
in (\ref{yrel}). Now, first consider the relations $[a_{3k},d]=1, [a_{k+1}^{-1},d]=b_1$ in (\ref{yrel})
 and the relation $ a_{3k}=a_{k+1}^{-1}$ in (\ref{eq76}), which yield $b_1=1$. Next, since $[b_{1}^{-1},d^{-1}]=a_1$ in (\ref{yrel}), $a_1=1$. 
  Also, using the relations $a_1a_{2k}=a_{2k}a_{2k+1}=1$ in (\ref{eq76}), we have $a_{2k}=a_{2k+1}=1$. Using these equations,
one can obtain that $b_k=b_{k+1}=1$, since $[a_{2k}^{-1},d]=b_k$ and $[a_{2k+1}^{-1},d]=b_{k+1}$ in (\ref{yrel}). It follows from the relations $
 [b_{k}^{-1},d^{-1}]=a_k$  and $b_k=1$ in (\ref{yrel}) that $a_k=1$. Also, by the fact that $[a_{k}^{-1},d]=b_{3k-1}$ in (\ref{yrel}) and $a_k=1$, we obtain that $b_{3k-1}=1$. This also implies that $a_{3k-1}=1$ by the relation $[b_{3k-1}^{-1},d^{-1}]=a_{3k-1}$ in (\ref{yrel}). The equations $a_k=1$ and $[a_{k},a_{k+1}]=[a_{k+1},a_{3k}]=1$ in (\ref{eq76}) imply that $  a_{3k}=1$. Now, consider the relation $b_{k+2}b_{k+3}\cdots b_{3k-1}=[a_{3k-1},b_{3k-1}][a_{3k},b_{3k}$ in (\ref{eq78}) and the relation $b_{k+1}b_{k+2}\cdots b_{3k}=1$ coming from vanishing cycle $B_{0}^{\prime}$ shown 
    in Figure~\ref{GMAT}. Combining these relations, we get $b_{k+2}b_{k+3}\cdots b_{3k-1}=b_{k+1}b_{k+2}\cdots b_{3k}=1$, since $a_{3k}=1$ and $b_{3k-1}=1$. Then, we get $b_{k+1}b_{3k}=1$.
       This impiles that $b_{3k}=1$ since $b_{k+1}=1$.
       Finally, we can obtain that $c=d=1$ using the relations $[c^{-1},b_{3k}]=d$ and $[d^{-1},b_{3k}^{-1}]=c$ in (\ref{yrel}). Therefore, the following relations
\[ [b_{i}^{-1},d^{-1}]=a_i, \hspace{0.4cm} i=1,\ldots, 3k-1\]
\[ [a_{i}^{-1},d]=b_{2k-1+i}, \hspace{0.4cm} i=1, \ldots, k\]
\[ [a_{i}^{-1},d]=b_{i-k}, \hspace{0.4cm} i=k+1, \ldots, 3k-1\]
in (\ref{yrel}) imply that $\pi_{1}(Z(k))=1$ . \par

Using the fact that Luttinger surgery preserves the Euler characteristic
and the signature, we have
\begin{eqnarray*}
e(Z(k))&=&e(X(k))+e(Y_k(1,1))-2e(\Sigma_{3k})=6+8k,\\
\sigma(Z(k))&=&\sigma(Z(k))+\sigma(Y_k(1,1))=-6.
\end{eqnarray*}
The Freedman's classification implies that the $4$-manifold $Z(k)$ is homeomorphic to 
$(4k-1) \C\P^{2} \#  (4k+5)\overline{\C\P^{2}}$ for
any positive integer $k$. Since the symplectic $4$-manifold $Z(k)$ has $b_{2}^{+}(Z(k))\geq 2$, the Seiberg-Witten invariant of the canonical class of $Z(k)$  is $\pm 1$~\cite{t3}. 
However, the Seiberg-Witten invariant of the canonical class of 
$(4k-1) \C\P^{2} \#  (4k+5)\overline{\C\P^{2}}$ is trivial
(cf.~\cite{sala}). Hence we distinguish $Z(k)$ with 
$(4k-1) \C\P^{2} \#  (4k+5)\overline{\C\P^{2}}$
up to diffeomorphism, which is due to the fact that Seiberg-Witten invariant is a diffeomorphism invariant. Moreover, one can replace $4$-manifold $Y_k(1,1)$ by $Y_k(1,m)$ in 
the construction above, where the integer $m\neq1$ to construct infinitely
many exotic copies of $(4k-1) \C\P^{2} \#  (4k+5)\overline{\C\P^{2}}$. 
\end{proof}


\end{document}